\newtheorem{definition}{Definition}
\newtheorem{notation}[definition]{Notation}
\newtheorem{theorem}[definition]{Theorem}
\newtheorem*{theorem*}{Theorem}
\newtheorem{proposition}[definition]{Proposition}
\newtheorem*{proposition*}{Proposition}
\newtheorem{corollary}[definition]{Corollary}
\newtheorem{lemma}[definition]{Lemma}
\newtheorem{remark}[definition]{Remark}
\newtheorem{question}{Question}
\newtheorem*{question*}{Question}
\theoremstyle{definition}
\newtheorem*{notation*}{Notation}
\DeclareMathOperator{\Coll}{Coll}
\DeclareMathOperator{\Gr}{Gr}
\DeclareMathOperator{\Pic}{Pic}
\DeclareMathOperator{\spine}{sp}
\def\bC{\mathbb{C}}
\def\cC{\mathcal{C}}
\def\cL{\mathcal{L}}
\def\cO{\mathcal{O}}
\def\bP{\mathbb{P}}
\def\cV{\mathcal{V}}
\def\cW{\mathcal{W}}
\def\bZ{\mathbb{Z}}
\def\oM{\overline{\mathcal{M}}}
\def\cM{{\mathcal{M}}}
\def\omM{\overline{M}}
\def\C{\mathbb{C}}
\def\cH{\mathcal{H}}
\def\oH{\overline{\mathcal{H}}}
\newcommand{\Tev}{{\mathsf{Tev}}}
\title{Generalized Tevelev degrees of $\mathbb{P}^1$}
\author{Alessio Cela}
\address{ETH Z\"urich, Departement Mathematik,  R\"amisstrasse 101
\hfill \newline\texttt{}
 \indent 8044 Z\"urich, Switzerland} \email{{\tt alessio.cela@math.ethz.ch}}
\author{Carl Lian}
\address{Humboldt-Universit\"at zu Berlin, Institut f\"ur Mathematik,  Unter den Linden 6
\hfill \newline\texttt{}
 \indent 10099 Berlin, Germany} \email{{\tt liancarl@hu-berlin.de}}
\date{\today}
\begin{document}

\maketitle
\begin{abstract}
Let $(C,p_1,\ldots,p_n)$ be a general curve. We consider the problem of enumerating covers of the projective line by $C$ subject to incidence conditions at the marked points. These counts have been obtained by the first named author with Pandharipande and Schmitt via intersection theory on Hurwitz spaces and by the second named author with Farkas via limit linear series. In this paper, we build on these two approaches to generalize these counts to the situation where the covers are constrained to have arbitrary ramification profiles: that is, additional ramification conditions are imposed at the marked points, and some collections of marked points are constrained to have equal image. 
\end{abstract}

\tableofcontents

\section{Introduction}

Throughout, we work over $\bC$.

\subsection{Tevelev degrees}

Let $X$ be a smooth, projective variety. We are interested in the following question:

\begin{question}\label{tev_question}
Let $(C,p_1,\ldots,p_n)$ be a general pointed curve of genus $g$, let $\beta\in H_2(X,\bZ)$ be a curve class on $X$, and let $q_1,\ldots,q_n\in X$ be general points. Then, how many morphisms $f:C\to X$ are there in class $\beta$ (that is, $f_{*}([C])=\beta$) satisfying $f(p_i)=q_i$ for $i=1,2,\ldots,n$?
\end{question}

Equivalently, we ask for the set-theoretic degree of the canonical morphism
$$\tau:\cM_{g,n}(X,\beta)\to\cM_{g,n}\times X^n.$$
We assume on the one hand that the expected dimension of $\cM_{g,n}(X,\beta)$ is equal to the dimension of $\cM_{g,n}\times X^n$, equivalently
$$\int_{\beta}c_1(T_X)=\dim(X)(n+g-1),$$
and on the other hand that all dominating components of $\cM_{g,n}(X,\beta)$ are generically smooth of the expected dimension. In this case, the answer $\Tev^X_{g,\beta,n}$ is referred to as a \textbf{geometric Tevelev degree} of $X$. This question was considered by Tevelev in the case $X=\bP^1$, $\beta=(g+1)[\bP^1]$, $n=g+3$ in \cite{tevelev}.

Alternatively, one can formulate a virtual analogue of the question in Gromov-Witten theory in terms of the space of stable maps $\oM_{g,n}(X,\beta)$, equipped with its virtual fundamental class. The resulting counts are referred to as \textbf{virtual Tevelev degrees} and can be expressed in terms of the quantum cohomology of $X$, see \cite{bp} and \cite{Cela}. It is expected that when $X$ and $\beta$ are sufficiently positive, the geometric and virtual Tevelev degrees agree, see \cite{lp} for partial results in this direction.

We will deal in this work exclusively with geometric Tevelev degrees, so henceforth drop the modifier ``geometric.'' These are, in general, more difficult to compute than virtual Tevelev degrees, and at present complete answers are only available for $X=\bP^1$, which we review in the next section.

\subsection{Tevelev Degrees of $\bP^1$}
We now specialize to the case $X=\bP^1$. We will generalize the situation slightly, imposing the condition that $r\ge1$ of the marked points on the source curve $C$ map to the same marked point of the target. 

Fix a genus $g \geq 0$, an integer $\ell \in \mathbb{Z}$ and a positive integer $r \in \mathbb{Z}_{\geq 1}$. Call

\begin{equation}\label{old def of d and n}
d=g+1+\ell \hspace{0.15cm} \mathrm{and} \hspace{0.15cm} n=g+3+2 \ell.
\end{equation}

Assume 
\begin{equation}\label{old conditions on d and n}
r \leq d \hspace{0.15cm} \mathrm{and} \hspace{0.15cm} n-r+1 \geq 3.
\end{equation}

Let $\oH_{g,d,r}$ be the Deligne-Mumford stack parametrizing degree $d$ Harris-Mumford admissible covers
$$
\pi:(C,p_1,...,p_n) \rightarrow (D,q_1,...,q_{n-r+1})
$$
where $C$ is a genus $g$ nodal curve with $n$ distinct marked points $p_1,...,p_n \in C^{sm}$, $D$ is a genus $0$ nodal curve with $n-r+1$ distinct marked points $q_1,...,q_{n-r+1} \in D^{sm}$, $\pi$ sends $p_i$ to $q_i$ for $i=1,...,n-r$ and $p_i$ to $q_{n-r+1}$ for $ i \geq n-r+1$. We require $\pi$ to have exactly $2g+2d-2$ simple branch points $z_1,....,z_{2g+2d-2}$. In addition, we require that $\{ z_1,....,z_{2g+2d-2} \} \cap \{q_1,...,q_{n-r+1} \} = \emptyset$ and that $(D,q_1,...,q_{n-r+1},z_1,....,z_{2g+2d-2})$ is a stable curve. See Notation \ref{notaion for ehk=1} below for a more precise definition.

Define
$$
\tau_{g,\ell,r}: \oH_{g,d,r} \rightarrow \oM_{g,n} \times \omM_{0,n-r+1}
$$
to be the morphism that remembers only the stabilized domain and the target curves with all of the marked points (while ramification and branch points are forgotten and the curves are stabilized). Note that by condition \eqref{old def of d and n}, the domain and the target of $\tau_{g,\ell,r}$ have the same dimension. 

Finally, define the \textbf{Tevelev degree}
$$
\mathsf{Tev}_{g,\ell,r}= \frac{\mathrm{deg}(\tau_{g,\ell,r})}{(2g+2d-2)!}
$$
and set $\mathsf{Tev}_{g,\ell,r}=0$ in all other cases. In the case $r=1$, it is straightforward to check that this recovers the definition of the previous section, that is, $$\Tev_{g,\ell,1}=\Tev^{\bP^1}_{g,g+1+\ell,g+3+2\ell}=\Tev^{\bP^1}_{g,d,n}.$$ The factor $(2g+2d-2)!$ removes the redundancy of the different possible labelings of the ramification points on the domain curve.

In \cite{CPS}, explicit closed formulas for  $\mathsf{Tev}_{g,\ell,r}$ are found using the following recursion, obtained via intersection theory on the space $\oH_{g,d,n,r}$ after a nodal degeneration of $C$:

\begin{proposition*} \cite[Proposition 7]{CPS}
Fix $g, r \geq 1$ and  $\ell \in \mathbb{Z}$ satisfying the conditions in \eqref{old conditions on d and n}. Then,
we have the recursion
\begin{equation*} 
    \mathsf{Tev}_{g,\ell,r} = \mathsf{Tev}_{g-1,\ell,\max(1,r-1)} + \mathsf{Tev}_{g-1,\ell+1,r+1}\,.
\end{equation*}
\end{proposition*}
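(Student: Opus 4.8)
The plan is to prove the recursion by degenerating the source curve $(C,p_1,\ldots,p_n)$ to a nodal curve and analyzing how admissible covers in $\oH_{g,d,n,r}$ specialize. Since we are computing the degree of $\tau_{g,\ell,r}$ over a general point of $\oM_{g,n}\times\omM_{0,n-r+1}$, we are free to choose a convenient degeneration of the source curve. The natural choice is to let $C$ degenerate to $C' \cup_x E$, where $C'$ has genus $g-1$ and $E$ is a rational tail (or a genus-$1$ component, depending on the marked-point bookkeeping) meeting $C'$ at a single node $x$. I would distribute the marked points so that the point that forces the coincidence — say $p_n$, one of the $r$ points mapping to $q_{n-r+1}$ — moves onto the rational component $E$ together with enough general points to keep everything stable, and then track the limiting admissible covers.

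**First** I would set up the degeneration precisely: choose $(C',p_1,\ldots,p_{n-1},x)$ general of genus $g-1$ with $n$ special points, and a general rational bridge/tail $E$ carrying $p_n$ and $x$, glued at $x$. On the target side, the general configuration $(D,q_1,\ldots,q_{n-r+1},z_1,\ldots,z_{2g+2d-2})$ must simultaneously degenerate: the $\binom{2g+2d-2}{2}$ of the $z_i$'s split, with two branch points colliding onto a rational bridge of $D$ (this is the standard picture from \cite{CPS} and from the limit-linear-series approach of Farkas--Lian), and the node of $C$ maps to the node of $D$. The key analytic input — which I expect to be the \textbf{main obstacle} — is the explicit description of admissible covers of this degenerate target: over the main component $D'$ one gets a degree-$d$ cover by $C'$, and over the rational bridge $D_0$ one must classify the possible covers by $E$ subject to the constraint that they glue to the cover of $D'$ along the node. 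This requires a careful local analysis of ramification at the node: the cover of $D'$ can meet the node either with the point over it being a ramification point of the relevant sheet or not, and these two cases are exactly what produce the two terms of the recursion.

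**The two summands** should arise as follows. In the first case, the node of $C$ over the node of $D$ is unramified, and the $r$ coincident points, minus the one on $E$, impose a coincidence condition of order $r-1$ on the induced cover of $(C',\ldots)$ — after reorganizing the combinatorics of the marked points and branch points, this cover is counted by $\Tev_{g-1,\ell,\max(1,r-1)}$, the $\max$ accounting for the degenerate case $r=1$ where no coincidence survives. In the second case, the node is a ramification point of the cover; this consumes one of the simple branch points and shifts the discrete invariants, so that $d$ decreases appropriately relative to the genus drop, i.e.\ one lands in $\oH_{g-1,d,n+1,r+1}$ after accounting for the extra marked point created, giving the term $\Tev_{g-1,\ell+1,r+1}$. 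In each case I would verify that the general-position hypotheses (generic smoothness of dominating components, transversality of $\tau$) are preserved under the degeneration, so that the count of limits equals the limit of the count, and that the factorials $(2g+2d-2)!$ versus $(2g+2(d)-2)!$ etc.\ match up with the number of ways to distribute branch points between the components — this is where the normalization in the definition of $\mathsf{Tev}$ earns its keep.

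**Finally**, I would assemble the pieces: the degree of $\tau_{g,\ell,r}$ over the chosen boundary point equals the sum over the two node-types of the number of limiting admissible covers, each weighted by the appropriate combinatorial factor; dividing by $(2g+2d-2)!$ and comparing with the definitions of $\mathsf{Tev}_{g-1,\ell,\max(1,r-1)}$ and $\mathsf{Tev}_{g-1,\ell+1,r+1}$ yields the stated identity. The delicate points to be careful about are (i) checking that no limiting covers have source curves with unexpected components (e.g.\ contracted components or extra rational bridges), which would otherwise contribute spurious terms or multiplicities, and (ii) confirming that the conditions \eqref{old conditions on d and n} are inherited by the two smaller Tevelev degrees, so the recursion is well-posed. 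I expect the smoothness and transversality verifications to be routine given the general-position assumptions, while the honest classification of admissible covers over the degenerate target — in particular the node-ramification dichotomy and its effect on the numerical invariants — is the crux of the argument.
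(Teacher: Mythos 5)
There is a genuine gap, and it starts with the degeneration itself. Your degenerate source curve $C'\cup_x E$ with $E$ rational and attached at a \emph{single} node has arithmetic genus $g-1$, so it is not a point of $\oM_{g,n}$ at all; the hedge ``or a genus-$1$ component'' would fix the genus but then the rest of your analysis (a rational component carrying $p_n$, covers of a rational bridge of the target) no longer applies. The degeneration actually used for this recursion (in \cite{CPS}, and in the proof of Proposition \ref{prop recursion} here) attaches the rational component carrying $p_n$ to a general genus $g-1$ curve at \emph{two} points, so the dual graph $\Gamma_0$ has a loop; this is what keeps the arithmetic genus equal to $g$. Your numerology is also off: in the term $\mathsf{Tev}_{g-1,\ell+1,r+1}$ the degree does not drop --- $d[g-1,\ell+1]=d[g,\ell]$ --- while in $\mathsf{Tev}_{g-1,\ell,r-1}$ it is the degree and the number of markings that each drop by one.

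More seriously, the step you label as routine --- ``the count of limits equals the limit of the count'' --- is exactly what fails in this approach and is the heart of the proof. Over the boundary stratum $\oM_{\Gamma_0}\times\omM_{0,n-r+1}$ (which has codimension two), the components $\oH_{(\Gamma,\Gamma')}$ of the preimage in the Hurwitz space are of excess dimension, so one cannot simply enumerate limiting admissible covers with multiplicity one. The paper instead applies the excess intersection formula, computing $\deg\left(\left\{c(N_{(\Gamma,\Gamma')})\cap s(\oH_{(\Gamma,\Gamma')},\oH_{g,d,n,r})\right\}_{\dim-2}\right)$ for each contributing pair $(\Gamma,\Gamma')$, and there are \emph{three} such types, not the two cases of your node-ramified/unramified dichotomy: type I contributes $-b!\,\mathsf{Tev}_{g-1,\ell+1,r+1}$, type II contributes $+2b!\,\mathsf{Tev}_{g-1,\ell+1,r+1}$, and type III contributes $+b!\,\mathsf{Tev}_{g-1,\ell,r-1}$; the recursion emerges only after the signed cancellation $-1+2=1$. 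A transversality argument of the kind you sketch cannot produce a negative contribution, so your two-case count would not give the correct answer. (If you want a proof where limits genuinely are transverse and counted naively, that is the limit-linear-series route of \S\ref{sec:schubert}, but there the recursion is recovered algebraically from the Schubert-calculus formula via the Pieri rule, not from a one-node degeneration.)
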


The genus $0$ case is instead established by hand and then the whole recursion is explicitly solved, giving the following:

\begin{theorem*} \cite[Theorem 12]{CPS}
Fix $g \geq 0$, $\ell \in \mathbb{Z}$ and $r \geq 1$ satisfying conditions \ref{old conditions on d and n}. Then, we have:

\vspace{5pt}
\noindent\ \ \ \ \ $\bullet$ for $r=1$,
\begin{equation*} 
    \mathsf{Tev}_{g,\ell,r} = 2^g -2 \sum_{i=0}^{-\ell-2} \binom{g}{i} + (-\ell  -2) \binom{g}{-\ell-1} + \ell \binom{g}{-\ell} \, ,
\end{equation*}
 
 \vspace{5pt}
\noindent \ \ \ \ \ $\bullet$ for $r>1$,
\begin{equation*} 
    \mathsf{Tev}_{g,\ell,r} = 2^g -2 \sum_{i=0}^{-\ell-2} \binom{g}{i} + (-\ell + r -3) \binom{g}{-\ell-1} + (\ell-1) \binom{g}{-\ell} - \sum_{i=-\ell+1}^{r-\ell-2} \binom{g}{i}\,.
\end{equation*}
\end{theorem*}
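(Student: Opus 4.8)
The plan is to prove both closed formulas by induction on the genus $g$, using the recursion of \cite[Proposition 7]{CPS} as the inductive step and the case $g=0$ as the base. Although that recursion is stated only for parameters satisfying \eqref{old conditions on d and n}, this causes no trouble: a triple violating \eqref{old conditions on d and n} has $\mathsf{Tev}=0$ by definition, so such a term may always be replaced by $0$ on the right-hand side.

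\emph{Base case.} I would first show directly that $\mathsf{Tev}_{0,\ell,r}=1$ for every triple $(0,\ell,r)$ satisfying \eqref{old conditions on d and n} (equivalently $\ell\ge 0$ and $1\le r\le \ell+1$), and check that both displayed formulas evaluate to $1$ there — this is immediate, since $\binom{0}{i}=0$ for $i\neq 0$ and each binomial sum appearing is either empty or collapses to a single $\binom{0}{0}=1$ that cancels. Geometrically, a general point of $\oM_{0,n}\times\omM_{0,n-r+1}$ is a pair of smooth rational curves with general markings, over which every admissible cover in the fiber of $\tau_{0,\ell,r}$ is smooth with $2d-2$ distinct branch points determined by the cover; thus the fiber has $(2d-2)!$ points (the branch-point labelings) for each degree-$d$ map $f\colon\bP^1\to\bP^1$ satisfying $f(p_i)=q_i$ for $i\le n-r$ and $f(p_i)=q_{n-r+1}$ for $i>n-r$. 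Writing $f=P/Q$ with $\deg P,\deg Q\le d$, these are $n=2d+1$ linear conditions on the $2d+2$ coefficients of $(P,Q)$, so for general $q_\bullet$ there is exactly one such $f$; a routine genericity argument then shows that for general data this $f$ is a genuine cover (coprime $P,Q$ of degree $d$, unramified over the $q_i$, with $2d-2$ distinct simple branch points) and that $\tau_{0,\ell,r}$ is unramified along the fiber. Hence $\deg(\tau_{0,\ell,r})=(2d-2)!$ and $\mathsf{Tev}_{0,\ell,r}=1$.

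\emph{Inductive step.} For $g\ge 1$, assuming the formulas for all valid triples of genus $<g$, I would fix a valid $(g,\ell,r)$ and expand
\begin{equation*}
\mathsf{Tev}_{g,\ell,r}=\mathsf{Tev}_{g-1,\ell,\max(1,r-1)}+\mathsf{Tev}_{g-1,\ell+1,r+1},
\end{equation*}
splitting into the cases $r=1$, $r=2$, $r\ge 3$. For $r=1,2$ one has $\max(1,r-1)=1$, so the first summand is computed by the $r=1$ formula and the second by the $r>1$ formula (at $r=2$, resp.\ $r=3$); for $r\ge 3$ both summands use the $r>1$ formula. In each case one records whether $(g-1,\ell,\max(1,r-1))$ and $(g-1,\ell+1,r+1)$ satisfy \eqref{old conditions on d and n} — substituting the inductive formula when they do and $0$ when they do not — and then verifies the resulting binomial identity. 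The only tools needed are Pascal's rule, its summed form $\sum_{i=0}^{m}\binom{g}{i}=\sum_{i=0}^{m}\binom{g-1}{i}+\sum_{i=0}^{m-1}\binom{g-1}{i}$, and reindexing of $\sum_{i=-\ell+1}^{r-\ell-2}$; e.g.\ for $r=1$ the second summand $\mathsf{Tev}_{g-1,\ell+1,2}$ has empty defining sum $\sum_{i=-\ell}^{-\ell-1}\binom{g-1}{i}$, and combining it with $\mathsf{Tev}_{g-1,\ell,1}$ via these identities returns exactly $\mathsf{Tev}_{g,\ell,1}$.

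\emph{Main obstacle.} The geometry of the base case and each individual binomial manipulation are routine; the genuine difficulty is the bookkeeping, above all keeping careful track of the region \eqref{old conditions on d and n}. This matters because the formulas do \emph{not} vanish identically outside that region — for instance $\mathsf{Tev}_{0,\ell,1}$ ``formally'' equals $-1$ for $\ell\le -2$ — so any right-hand summand falling outside the valid range must be set to $0$ by hand rather than read off the formula, and one must check this against the identity for the (valid) left-hand side. The case $r=2$ is the sharpest, since there the recursion genuinely bridges the two branches, $\mathsf{Tev}_{g,\ell,2}=\mathsf{Tev}_{g-1,\ell,1}+\mathsf{Tev}_{g-1,\ell+1,3}$, and it is precisely this that forces the relative shape of the $r=1$ and $r>1$ formulas. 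A helpful consistency check is the combinatorial picture from unrolling the recursion: $\mathsf{Tev}_{g,\ell,r}$ counts length-$g$ paths with steps $(\ell,r)\mapsto(\ell,\max(1,r-1))$ or $(\ell,r)\mapsto(\ell+1,r+1)$ that stay inside the valid region and end at a genus-$0$ triple (of value $1$); the two closed formulas are the evaluation of this floored, constrained count, and matching them is the crux.
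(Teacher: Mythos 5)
Your proposal is correct and follows essentially the same route as the paper and \cite{CPS}: a genus-0 base case equal to $1$, obtained from the explicit analysis of pairs of degree-$d$ forms subject to the $n=2d+1$ linear incidence conditions, followed by induction on $g$ via the recursion of \cite[Proposition 7]{CPS}, with out-of-range terms on the right-hand side set to zero and the binomial bookkeeping handled by Pascal's rule. The one detail worth importing from the paper is the genus-0 uniqueness argument (Lemma \ref{genus0_inj}): if $\pi_1,\pi_2$ both satisfy the incidence conditions, then $\pi_1/\pi_2$ has degree at most $n-1$ yet takes the value $1$ at the $n$ marked points, hence is constant — this cleanly justifies your assertion that for general data there is exactly one such map, rather than relying on independence of the $2d+1$ hyperplanes.
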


When $\ell\ge r-1$, all of the binomial coefficients are interpreted to vanish, leaving simply $\Tev_{g,\ell,r}=2^g$. This agrees with the virtual Tevelev degrees computed in \cite{bp}.

A different degeneration approach via the theory of limit linear series is given in \cite{FarkasLian}, resulting in the following formula:
\begin{theorem*}\cite[Theorem 1.3]{FarkasLian}
Fix $g \geq 0$, $\ell \in \mathbb{Z}$ and $r \geq 1$ satisfying conditions \eqref{old def of d and n} and \eqref{old conditions on d and n}. Then,
\begin{equation*} 
    \mathsf{Tev}_{g,\ell,r} = \int_{\Gr(2,d+1)}\sigma_1^g\sigma_{r-1}\cdot\left[\sum_{i+j=n-2-r}\sigma_{i}\sigma_j\right]-\int_{\Gr(2,d)}\sigma_1^g\sigma_{r-2}\cdot\left[\sum_{i+j=n-3-r}\sigma_{i}\sigma_j\right]
\end{equation*}
\end{theorem*}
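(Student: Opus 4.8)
The plan is to deduce the identity from the two results of \cite{CPS} recalled above. The recursion of \cite[Proposition 7]{CPS}, together with the genus $0$ values (which \cite{CPS} compute by hand, and which under \eqref{old conditions on d and n} are forced to equal $1$, and $0$ when those conditions fail), determine $\mathsf{Tev}_{g,\ell,r}$ uniquely; so it suffices to check that the right-hand side of the asserted formula --- write it $F_{g,\ell,r}$ --- satisfies the same recursion and the same base case. Besides proving the statement, this reconciles the Hurwitz-space and limit-linear-series approaches. Everything takes place in the Schubert calculus of $\Gr(2,m)$, where classes $\sigma_{a,b}$ are indexed by $m-2\ge a\ge b\ge0$, the special ones are $\sigma_a=\sigma_{a,0}$, Pieri reads $\sigma_1\cdot\sigma_{a,b}=\sigma_{a+1,b}+\sigma_{a,b+1}$ (summands outside the range being zero), and $\sigma_{a,b}=\sigma_{1,1}^{\,b}\cdot\sigma_{a-b}$. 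I will also use the projection formula $\int_{\Gr(2,m)}\sigma_{1,1}\cdot\beta=\int_{\Gr(2,m-1)}\beta$, valid because $\sigma_{1,1}$ is the class of the sub-Grassmannian $\{W\subseteq V_{m-1}\}\cong\Gr(2,m-1)$ and special classes restrict to special classes of the same index.

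For the base case take $g=0$, so $d=\ell+1$, $n=2\ell+3$, and condition \eqref{old conditions on d and n} forces $\ell\ge0$ and $1\le r\le\ell+1$. Since $\sigma_1^0=1$, each of the two integrals in $F_{0,\ell,r}$ is a sum of three-point numbers $\int_{\Gr(2,m)}\sigma_a\sigma_b\sigma_c$, which equal $1$ exactly when $a+b+c=2(m-2)$ and $\max(a,b,c)\le m-2$, and $0$ otherwise. A short count over the range $i+j=n-2-r$ (resp.\ $i+j=n-3-r$) then gives $\int_{\Gr(2,d+1)}\sigma_{r-1}\sum\sigma_i\sigma_j=r$ and $\int_{\Gr(2,d)}\sigma_{r-2}\sum\sigma_i\sigma_j=r-1$, so $F_{0,\ell,r}=1=\mathsf{Tev}_{0,\ell,r}$. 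One also checks that $F_{g,\ell,r}=0$ whenever \eqref{old conditions on d and n} fails: if $r>d$ the classes $\sigma_{r-1}$ on $\Gr(2,d+1)$ and $\sigma_{r-2}$ on $\Gr(2,d)$ already vanish, and if $n-r+1<3$ the index sums are empty; likewise the formula reads $0$ on the empty Grassmannians $\Gr(2,m)$ with $m<2$. This matches the convention $\mathsf{Tev}_{g,\ell,r}=0$ there and is what lets the induction close at the boundary of the valid range.

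The inductive step ($g\ge1$) is the crux, and it is clean. Passing to $(d,n)$-coordinates, $\mathsf{Tev}_{g-1,\ell,r-1}$ lives on $\Gr(2,d-1)$ and $\Gr(2,d)$ while $\mathsf{Tev}_{g-1,\ell+1,r+1}$ lives on $\Gr(2,d)$ and $\Gr(2,d+1)$, both with $\sigma_1^{g-1}$. On the other side I factor $\sigma_1^g=\sigma_1\cdot\sigma_1^{g-1}$ in each of the two integrals defining $F_{g,\ell,r}$ and move the extra $\sigma_1$ onto the factor $\sigma_{r-1}$, respectively $\sigma_{r-2}$, via $\sigma_1\sigma_a=\sigma_{a+1}+\sigma_{a,1}$. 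The ``spine'' terms $\sigma_{a+1}$ reassemble, verbatim up to index bookkeeping, into the two integrals of $F_{g-1,\ell+1,r+1}$, and the second-row terms $\sigma_{a,1}=\sigma_{1,1}\cdot\sigma_{a-1}$ are pushed down one Grassmannian by the projection formula and reassemble into the two integrals of $F_{g-1,\ell,r-1}$; here the identities $n-2-r=n-r-2$ and $n-3-r=n-r-3$ make the matching exact. Hence $F_{g,\ell,r}=F_{g-1,\ell,r-1}+F_{g-1,\ell+1,r+1}$ for $r\ge2$. For $r=1$ the factor $\sigma_{r-2}=\sigma_{-1}$ vanishes, so $F_{g,\ell,1}$ is a single integral; a one-line computation shows that the extra summand $F_{g-1,\ell,1}$ mandated by the $\max$ in the recursion equals the second integral of $F_{g-1,\ell+1,2}$, so the recursion again holds. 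Invoking \cite[Proposition 7]{CPS} and the genus $0$ computation of \cite{CPS} --- or, equivalently, comparing with the closed formula of \cite[Theorem 12]{CPS} --- we conclude $F_{g,\ell,r}=\mathsf{Tev}_{g,\ell,r}$.

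The main point requiring care is organizational rather than conceptual: aligning the four integrals produced by the Pieri manipulation with the four integrals coming from the two summands of the recursion, and making sure the degenerate cases on the two sides (empty Grassmannians, Schubert classes that vanish for range reasons, empty index sums, the $\max$ truncation at $r=1$) cancel in the same way. Given the explicitness of Pieri on $\Gr(2,m)$, the projection formula, and the three-point formula used above, I expect no further obstacle. A more computational alternative --- expand $\sigma_{r-1}\sum\sigma_i\sigma_j$ by Pieri, use $\int_{\Gr(2,m)}\sigma_1^N\sigma_{a,b}=\binom{N}{m-2-a}-\binom{N}{m-3-a}$, and simplify the resulting alternating binomial sums --- would reproduce the formula of \cite[Theorem 12]{CPS} directly, but the telescoping there is messier than the recursion argument.
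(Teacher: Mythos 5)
Your argument is correct, but it takes a genuinely different route from the proof this statement actually has. The cited result is proved in \cite{FarkasLian} (and its generalization, Theorem \ref{main_thm_schubert}, is proved in \S\ref{sec:schubert} of this paper) geometrically: one degenerates $C$ to a curve with elliptic and rational tails, characterizes limit Tevelev points via limit linear series and complete collineations, and evaluates the resulting transverse intersection on $\Coll^1_d(\bP^1)$ by Schubert calculus. You instead take the Hurwitz-space results of \cite{CPS} as input --- the recursion $\Tev_{g,\ell,r}=\Tev_{g-1,\ell,\max(1,r-1)}+\Tev_{g-1,\ell+1,r+1}$ and the genus $0$ value $1$, together with the vanishing convention outside \eqref{old conditions on d and n} --- and verify that the Schubert expression satisfies the same recursion (via $\sigma_1\sigma_{a}=\sigma_{a+1}+\sigma_{1,1}\sigma_{a-1}$ and $\int_{\Gr(2,m)}\sigma_{1,1}\beta=\int_{\Gr(2,m-1)}\beta$) and the same base case (the three-point count giving $r-(r-1)=1$), including the boundary cases $r=d$, $r=1$, and empty index ranges, so uniqueness of the solution of the recursion finishes the proof. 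The index bookkeeping in your inductive step checks out: with $d[g-1,\ell]=d-1$, $n[g-1,\ell]=n-1$ and $d[g-1,\ell+1]=d$, $n[g-1,\ell+1]=n+1$, the four integrals produced by Pieri and the $\sigma_{1,1}$ pushdown match the four integrals of $F_{g-1,\ell,r-1}+F_{g-1,\ell+1,r+1}$ exactly, and your observation that for $r=1$ the term $F_{g-1,\ell,1}$ coincides with the second integral of $F_{g-1,\ell+1,2}$ is precisely what reconciles the $\max$ truncation. Note that these manipulations are essentially the computations the paper performs in \S\ref{sec:recover_recursion} (for general $k$), but run in the opposite logical direction: there they serve as a consistency check of a formula already proved geometrically, whereas you use them, plus \cite{CPS}, to establish the formula. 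What your route buys is an elementary, purely combinatorial proof of the stated identity; what it gives up is independence from \cite{CPS} (so it cannot serve as a second derivation of the numbers themselves) and the geometric content and generality of the limit-linear-series argument, which extends to arbitrary ramification profiles $\mu_1,\ldots,\mu_k$.
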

Here, the second term is interpreted to vanish when $r=1$. In the case $(r,n)=(1,3)$, the formulas of \cite{CPS} and \cite{FarkasLian} recover the classical counts of Castelnuovo \cite{Cast}.

\subsection{Generalized Tevelev degrees for $\mathbb{P}^1$}\label{Generalized Tevelev degrees}
In this paper, we consider a natural generalization of the previous problem, in which incidence conditions with arbitrary ramification profiles are imposed. Let $g \geq 0$ be a genus, $\ell \in \mathbb{Z}$ and fix $k\ge0$ vectors of integers

$$\mu_h=(e_{h,1},....,e_{h,r_h}) \in \mathbb{Z}_{\geq 1}^{ r_h}$$ with $r_h \geq 1$ for $h=1,..,k$. For each $h$, define $$|\mu_h|=\sum_{j=1}^{r_h}e_{h,j}.$$

Call 
\begin{equation}\label{def of d}
d[g,\ell]=g+1+ \ell \hspace{0.15cm}
\end{equation}
and assume
\begin{equation}\label{dimensional constraint}
 g+3+2 \ell =\sum_{h=1}^{k} |\mu_h| 
\end{equation}
and that for every $h=1,...,k$ we have 
\begin{equation} \label{conditions on d}
|\mu_h| \leq d[g,\ell].
\end{equation}
Also call
$$
n[\mu_1,...,\mu_k]= \sum_{h=1}^k r_h.
$$
the total number of markings and
$$
b[g,\ell,\mu_1,...,\mu_k]=2g+2d-2- \sum_{h=1}^k | \mu_h | + n
$$
the number of additional (simple) branch points.

\begin{notation}\label{notation_marked}
Unless there is some ambiguity, we will simply write $d,n$ and $b$ in place of $d[g,l]$, $n[\mu_1,...,\mu_k]$ and $b[g,l,\mu_1,...,\mu_k]$.
\end{notation}

\begin{definition}\label{def:hur_tev}
Denote by $\oH_{g,d,\mu_1,...,\mu_k}$ the Deligne-Mumford stack of admisssible covers (see also \cite{HM,acv}) whose objects over a scheme $S$ are commutative diagrams 
\[
\begin{tikzcd}
 C \arrow{rr}{\pi} \arrow{dr}  &&D \arrow{dl}\\
& S \arrow[lu, bend left=60, "p_i"] \arrow[ru, swap, bend right = 40, "q_j "]
  \arrow[ru, swap, bend right = 60, looseness=2, "Q_t"]
\end{tikzcd}
\]

where:

\begin{enumerate}[label=(\roman*)]
    \item $i \in \{1,...,n\}$, $j \in \{1,...,k \}$ and $t\in \{1,...,b\}$;
    \item $(C \rightarrow S,p_1,...,p_n)$ is a proper flat family of reduced connected genus $g$ curves with at most nodal singularities and $p_1,...,p_n:S \rightarrow C$ are sections lying in the smooth locus of $C/S$;
    \item $(D \rightarrow S,q_1,...,q_{k}, Q_1,...,Q_b)$ is a stable $(k+b)$-pointed curve of genus $0$;
    \item\label{incidence}  $\pi \circ p_i = q_{j}$ for $1 \leq j \leq k$ and $n- \sum_{h=j}^k r_h +1 \leq i \leq n- \sum_{h=j+1}^{k} r_h $ (see Figure 1 for an illustration); 
    \item $\pi^{-1}(\mathrm{Sing}(D/S) )= \mathrm{Sing( C/S)}$ set-theoretically;
    \item $\pi$ is finite and étale with the following exceptions:
    \begin{enumerate}
        \item for each $s \in S$ and each $x \in C_s$ a nodal point of $C_s$ (necessarily) sent to a nodal point $y$ of $D_s$ we require that locally $C,D$ and $\pi$ are described by:
        \begin{align*}
        & C: xy=a, \ a \in \widehat{\mathcal{O}_{S,s}}, \ x,y \ \mathrm{generate} \ \widehat{\mathfrak{m}_{C,x}} \\
        & D: uv=a^p, \ a \in \widehat{\mathcal{O}_{S,s}}, \hspace{0.3cm} u,v \ \mathrm{generate} \ \widehat{\mathfrak{m}_{D,y}}\\
        & \pi: u=x^p, v=y^p
        \end{align*}
        for some $p \in \mathbb{Z}_{\geq 1}$,
        \item there are unique smooth points $P_i:S \rightarrow C$, one over each $Q_i:S \rightarrow D$, where $\pi$ has ramification index $2$,
        \item  for $1 \leq h \leq k$ and $1 \leq j \leq r_h$, $\pi$ has ramification index $e_{h,j}$ at $p_{n-\sum_{s=h}^k r_s +j}$.
    \end{enumerate}
\end{enumerate}

Note that, by Riemann-Hurwitz formula, $\pi_s:C_s \rightarrow D_s$ is a degree $d$ map for all $s \in S$. The sections $p_i$ for $i=1,...,n$ and $q_i$ for $i=1,...,k$ are called \textbf{markings}, the sections $P_i$ for $i=1,...,b$ \textbf{ramification points} and the sections $Q_i$ for $i=1,...,b$ \textbf{branch points} of the cover $\pi$. 
\begin{figure}
\begin{center}
\begin{tikzpicture} [xscale=0.36,yscale=0.36]
			
	\draw [ultra thick, black] (5,0) to (34,0);

	\node at (11,0) {$\bullet$};
	\node at (11.7,-1) {$q_{1}$};
			
	\node at (18,-1) {$\dots$};
			
	\node at (20,0) {$\bullet$};
	\node at (21.5,-1) {$q_{k}$};
	
	\node[black!60!green] at (28,0) {$\bullet$};
	\node[black!60!green] at (28,-1) {$Q_1$};
			
	\node[black!60!green] at (30.8,-1) {$\dots$};
			
	\node[black!60!green] at (33,0) {$\bullet$};
	\node[black!60!green] at (33,-1) {$Q_b$};	
	\node[black!60!green] at (28,5) {$\bullet$};
	\node[black!60!green] at (28,4) {$P_1$};
			
	\node[black!60!green] at (30.8,4) {$\dots$};
			
	\node[black!60!green] at (33,5) {$\bullet$};
	\node[black!60!green] at (33,4) {$P_b$};
	
	
	\node at (11,10) {$\bullet$};
	\node[blue] at (11.5,10.5) {$e_{1,1}$};
	\node at (11.7,9.4) {$p_{1}$};
			
	\node at (11,7.5) {$\vdots$};
			
	\node at (11,5) {$\bullet$};
	\node[blue] at (11.5,5.5) {$e_{1,r_1}$};
	\node at (11.7,4.4) {$p_{r_1}$};
			
	\draw[decorate,decoration={brace,amplitude=5pt,mirror}] (14,4.5) -- (14,10.5);
	\node at (15,7.5) {$r_1$};
	
	\node at (20,12) {$\bullet$};
	\node[blue] at (20.5,12.5) {$e_{k,1}$};
	\node at (21.5,11) {$p_{n-r_k+1}$};
			
	\node at (20,8.5) {$\vdots$};
			
	\node at (20,5) {$\bullet$};
	\node[blue] at (20.5,5.5) {$e_{k,r_k}$};
	\node at (20.5,4) {$p_{n}$};
			
	\draw[decorate,decoration={brace,amplitude=5pt,mirror}] (24,4.5) -- (24,12.5);
	\node at (25,8.5) {$r_k$};
	
	\draw[->] (18,3) to (18,1);
\end{tikzpicture}
\caption{Explanation of the indices.}
\end{center}
\end{figure}
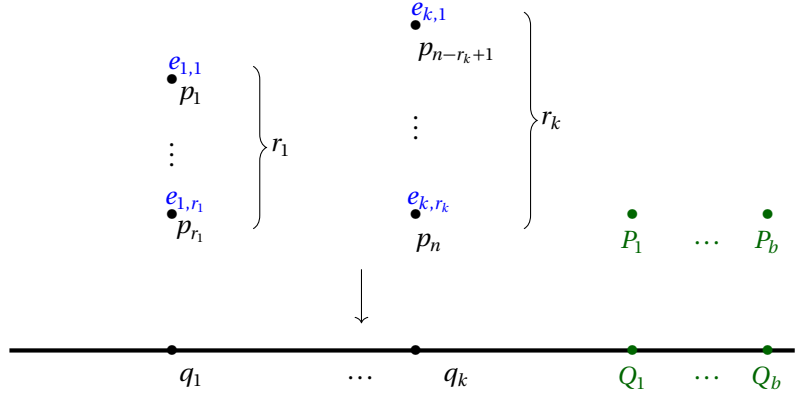

Morphisms in $\oH_{g,d,\mu_1,....,\mu_k}$ are cartesian diagrams
\[
\begin{tikzcd}
                                                &  & D \arrow[dd] \arrow[rrrr, "f_D"]                                                                                    &  &  &                                   & D' \arrow[dd]                                       \\
C \arrow[rrd] \arrow[rru, "\pi"] \arrow[rrrrr, "f_C"] &  &                                                                                                               &  &  & C' \arrow[rd] \arrow[ru, "\pi'"] &                                                                                                     \\
                                                &  & S \arrow[rrrr, "f_S"]  &  &  & & S' 
\end{tikzcd}
\]
with $f_C \circ p_i=f_S \circ p_i'$ for $i=1,...,n$, $f_D \circ q_j=f_S \circ q_j'$ for $j=1,...,k$ and $f_D \circ Q_t=f_S \circ Q_t''$ for $t=1,...,b$.
\end{definition}
Roughly speaking, these are Harris-Mumford admissible covers in \cite[Paragraph 4]{HM} with some extra data.
\begin{notation}\label{notaion for ehk=1}
When 
$$
\mu_h=(\underbrace{1,...,1}_{r_h})
$$
for all $h=1,...,k$, we simply write:
\begin{align*}
\oH_{g,d,r_1,...,r_k}&:=\oH_{g,d,\mu_1,...,\mu_k},\\
\tau_{g,\ell,r_1,...,r_1}&:=\tau_{g,\ell,\mu_1,...,\mu_k},\\
n[r_1,...,r_k]&:=n[\mu_1,...,\mu_k],\\
b[g,\ell,r_1,...,r_k]&:=b[g,\ell,\mu_1,...,\mu_k].
\end{align*}
\end{notation}
Assume now that
\begin{equation} \label{condition on k}
k \geq 3
\end{equation}
and let 
$$
\tau_{g,\ell,\mu_1,...,\mu_k}: \oH_{g,d,\mu_1,...,\mu_k} \rightarrow \oM_{g,n} \times \omM_{0,k}
$$
be the morphism that remembers only the domain and the target curves with all the marked points (while ramification and branch points are forgotten and the curves are stabilized).

Note that, by Equations \ref{def of d} and \ref{dimensional constraint}, the domain and the target of $\tau_{g,\ell,\mu_1,...,\mu_k}$ have both dimension 
$$
\mathrm{dim}[g,\ell,\mu_1,...,\mu_k]=4g+2\ell+(k-3)- \sum_{h=1}^k |\mu_h|+n=3g-3+n+(k-3)
$$

\begin{notation}
Unless there is ambiguity, we will write $\mathrm{dim}$ instead of $\mathrm{dim}[g,l,\mu_1,...,\mu_k]$.
\end{notation}

\begin{definition}
Define
$$
\mathsf{Tev}_{g,\ell,\mu_1,...,\mu_k}=
\begin{cases}
 \frac{\mathrm{deg}(\tau_{g,l,\mu_1,...,\mu_k})}{b!}&\text{when conditions \eqref{dimensional constraint}, \eqref{conditions on d} and \eqref{condition on k} are satisfied },\\
0  &\text{otherwise, }\, 
\end{cases} 
$$
to be the \textbf{generalized Tevelev degrees of $\mathbb{P}^1$}.
\end{definition}

The goal of this paper is to give explicit formulas for these Tevelev degrees following the two degeneration approaches of \cite{CPS} (via excess intersections on $\oH_{g,d,\mu_1,\ldots,\mu_{\ell}}$) and \cite{FarkasLian} (via limit linear series).

\subsection{Results and overview of the paper} 
In \S\ref{Reduction of the problem}-\ref{Explicit formulas}, we employ the strategy of \cite{CPS} to compute the generalized Tevelev degrees of $\bP^1$.

To begin, in \S\ref{Reduction of the problem}, we reduce the general problem to the case in which $e_{h,j}=1$ for all $h,j$. Namely, we prove the following equality:

\begin{proposition}\label{prop reduction}
Let $g \geq 0$, $\ell \in \mathbb{Z}$ and $\mu_1,...,\mu_k$ vectors with $\mu_h \in \mathbb{Z}_{\geq 1}^{ r_h}$ where $r_h \geq1$ for all $h=1,...,k$. Assume that the inequalities \eqref{dimensional constraint}, \eqref{conditions on d} and \eqref{condition on k} hold. Then we have:
$$
\mathsf{Tev}_{g,\ell,\mu_1,...,\mu_k}=\mathsf{Tev}_{g,\ell,|\mu_1|,...,|\mu_k|}.
$$

\end{proposition}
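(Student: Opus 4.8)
The plan is to construct an explicit isomorphism of Deligne--Mumford stacks $\oH_{g,d,n,\mu_1,\ldots,\mu_k}\xrightarrow{\ \sim\ }\oH_{g,d,n',r_1,\ldots,r_k}$, where $n'=n[g,\ell,r_1,\ldots,r_k]$, which is compatible with the two $\tau$-maps down to the relevant moduli of curves, up to a relabeling of marked points that does not affect the degree. The key observation is that in Definition~\ref{def:hur_tev}, replacing a ramification vector $\mu_h=(e_{h,1},\ldots,e_{h,r_h})$ by $(\underbrace{1,\ldots,1}_{r_h})$ changes $n$ to $n' = n + \sum_h(|\mu_h|-r_h)$ and changes $b$ to $b' = b + \sum_h(|\mu_h|-r_h) = b + (n'-n)$, but leaves the degree $d$ and the genus $g$ unchanged. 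So on the side of the admissible cover itself, the data is ``the same'' except that certain points which were required to be genuine ramification points of index $e_{h,j}>1$ over a marking $q_{n-r_{\mathrm{tot}}+j}$ are instead reorganized: over the marking we now have $r_h$ unramified marked points $p$, and the ``missing'' ramification $\sum_j(e_{h,j}-1)$ is absorbed into $\sum_h(|\mu_h|-r_h)$ extra simple branch points $Q$ with their corresponding ramification points $P$.

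First I would make precise the bijection on the level of covers over a geometric point, which is essentially a Riemann--Hurwitz bookkeeping: a cover with prescribed ramification profile $\mu_h$ over a point has the same Hurwitz-type data as a cover of the same degree with $r_h$ unramified points plus $\sum_j (e_{h,j}-1)$ further simple branch points clustered near that point — but to get an actual \emph{isomorphism of moduli stacks} (not just an equality of numbers) I would instead argue directly with the admissible-cover data: given an object of $\oH_{g,d,n,\mu_1,\ldots,\mu_k}$ over $S$, the local picture of $\pi$ near $p_{n-\sum_{s\ge h}r_s+j}$ is $u = x^{e_{h,j}}$; one does \emph{not} modify $C$, $D$, or $\pi$ at all, but rather reinterprets the section $p_{n-\sum_{s\ge h}r_s+j}$ (formerly a marking with ramification $e_{h,j}$) — wait, this is where care is needed, because one genuinely has to \emph{change} the target, splitting the marking $q_{n-r_{\mathrm{tot}}+j}$ into the corresponding collection of $r_h$ markings and $\sum_j(e_{h,j}-1)$ branch markings on $D$. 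The cleanest route is to note both stacks admit a common description: fix the source curve $C$, target $D$, the cover $\pi$ (of degree $d$, genus $g$, with exactly the simple-ramification and nodal local models of an admissible cover), and then the marked-point data on $D$ and $C$ is determined — in one case we call certain ramification points ``markings $p_i$ with profile $\mu_h$,'' in the other we call the \emph{same underlying ramification points} ``branch points $Q$/ramification points $P$'' and add $r_h$ unramified markings over the new marking. Rather than a literal identity, I would set up a functor in each direction and check they are mutually inverse, with the only real content being that the stability of $(D,q_\bullet,Q_\bullet)$ is preserved under the reshuffling, which follows from the dimension count already recorded in the excerpt (both $\tau$-maps have equal-dimensional source and target).

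The compatibility with $\tau$ is then immediate: $\tau$ forgets \emph{all} ramification and branch points and stabilizes, and it forgets exactly the data that differs between the two descriptions on the target side; on the source side, $\tau$ remembers $(C,p_1,\ldots,p_n)$ versus $(C,p_1,\ldots,p_{n'})$, but the $n'-n$ ``extra'' marked points in the unramified model are precisely (stabilizations of) points that were not marked at all in the $\mu$-model — here I need to be slightly careful, since $\tau_{g,\ell,\mu_1,\ldots,\mu_k}$ lands in $\oM_{g,n}$ while $\tau_{g,\ell,r_1,\ldots,r_k}$ lands in $\oM_{g,n'}$, so the claimed equality $\mathsf{Tev}_{g,\ell,\mu_1,\ldots,\mu_k}=\mathsf{Tev}_{g,\ell,|\mu_1|,\ldots,|\mu_k|}$ is \emph{not} a tautology of the isomorphism but requires comparing the two degree computations. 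The main obstacle, and the step I expect to spend the most effort on, is exactly this: showing that the generic fiber of $\tau$ over a general point of $\oM_{g,n}\times\omM_{0,n-r_{\mathrm{tot}}+k}$ has the same cardinality in both models, i.e. that imposing ramification of order $e_{h,j}$ at a marked point $p$ mapping to a general $q\in\bP^1$ cuts down the count by exactly the same factor as requiring $\sum_j(e_{h,j}-1)$ of the free simple branch points to collide over $q$ — and that the factorials $b!$ versus $b'!$ in the two definitions of $\mathsf{Tev}$ correctly absorb the discrepancy coming from the labelings of those newly-freed branch points. I would handle this by a local analysis at a general point of the source of $\tau$: a general admissible cover in the image is étale over a neighborhood of each $q_j$ except for the prescribed ramification, so near $q_{n-r_{\mathrm{tot}}+j}$ the cover looks like $\coprod$ of discs, $r_h$ of which (in the $\mu$-model) carry ramification $e_{h,j}$; degenerating $q_{n-r_{\mathrm{tot}}+j}$ and letting the branch points specialize shows the two enumerations agree on the nose, with the $b!/b'!$ factor matching the number of ways to order the branch points that become ``stuck'' at the marking. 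Everything else is formal.
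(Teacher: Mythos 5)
There is a genuine gap, and in fact the central device of your proposal cannot exist. The two stacks you propose to identify are not isomorphic: $\dim \oH_{g,d,n,\mu_1,\ldots,\mu_k}=(n-r_{\tot}+k)+b-3$, and passing from the profiles $\mu_h$ to the all-ones profiles of the same lengths $r_h$ increases this dimension by $2\sum_{h,j}(e_{h,j}-1)$, so for any $e_{h,j}>1$ there is no equivalence, only a codimension-$(e_{h,j}-1)$ condition being imposed or dropped. Concretely, a cover ramified to orders $e_{h,1},\ldots,e_{h,r_h}$ over a single marking $q$ is not the same cover as one with $r_h$ unramified marked preimages of $q$ together with extra simple branch points: the local monodromy over $q$ differs, and the two configurations are related only by a degeneration in which branch points collide with $q$ (where the cover itself degenerates to an admissible cover), not by a reinterpretation of labels. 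Moreover, even if your dictionary were available, it is aimed at the wrong comparison: it would give $\Tev_{g,\ell,\mu_1,\ldots,\mu_k}=\Tev_{g,\ell,r_1,\ldots,r_k}$, whereas the proposition asserts equality with $\Tev_{g,\ell,|\mu_1|,\ldots,|\mu_k|}$, i.e.\ with $|\mu_h|$ (not $r_h$) unramified points over the $h$-th target marking. These differ in general: for $k=1$, $\mu_1=(2)$, $\ell=0$, $g\ge 1$ one has $\Tev_{g,0,(2)}=\Tev_{g,0,2}=2^g-1$, while $\Tev_{g,0,1}=2^g$.

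The step you flag as the ``main obstacle'' --- that forcing branch points to collide over $q$ reproduces, with the correct multiplicities and the correct $b!$ versus $b'!$ bookkeeping, the count with prescribed ramification --- is precisely the entire content of the statement, and your proposal only asserts it; no analysis of the relevant boundary strata, of multiplicities along the excess/degenerate loci, or of transversality is given, and the locus ``branch points constrained to lie over $q$'' is in any case not the space whose $\tau$-degree defines any $\Tev$. The paper's proof goes the other way: it degenerates the \emph{source} curve, coalescing $p_{n-r_k+1},\ldots,p_n$ onto a rational tail, pulls back the Hurwitz space over the corresponding boundary divisor of $\oM_{g,n}$, shows (Lemma \ref{contributions in the reduction}) that the unique dominating stratum has the genus-$g$ component carrying full degree $d$ attached by a point of total ramification $|\mu_k|$ to a degree-$|\mu_k|$ tail, verifies multiplicity one via Cartesianity (Corollary \ref{cartesianity of our diagram}), and matches the combinatorial factors using $\deg(\varepsilon_{\mu_k})=(r_k-1)!$ (Lemma \ref{degree varpsilon mu}); this yields $\Tev_{g,\ell,\mu_1,\ldots,\mu_k}=\Tev_{g,\ell,\mu_1,\ldots,\mu_{k-1},(|\mu_k|)}$, from which the proposition follows by iteration. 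To repair your approach you would have to replace the claimed isomorphism by such a degeneration argument and redirect it at the single totally ramified point (equivalently at $|\mu_h|$ unramified points), which is essentially the paper's proof.
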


In \S\ref{Recursion via fiber geometry}, we use the same strategy as in \cite[Proof of Proposition 7]{CPS} to find a recursion for the Tevelev degrees: we degenerate $C$ to a 2-nodal curve obtained by attaching a rational bridge to a general smooth curve of genus $g-1$, and reduce, via excess intersections on Hurwitz spaces, to the case of genus $g=0$. We find:

\begin{proposition} \label{prop recursion}
Let $g \geq 1$, $\ell \in \mathbb{Z}$ an integer and $r_1,...,r_k \in \mathbb{Z}_{\geq 1}$ positive integers. Assume that the Equations \eqref{dimensional constraint}, \eqref{conditions on d}  and \eqref{condition on k} hold with 
$$
\mu_h=(\underbrace{1,...,1}_{r_h})
$$
for $h=1,...,k$. Then, we have the recursion 
$$
\mathsf{Tev}_{g,\ell,r_1,...,r_k}=\mathsf{Tev}_{g-1,\ell,r_1,...,r_{k-1},r_k-1}+\mathsf{Tev}_{g-1,\ell +1,r_1,...,r_{k-1},r_k+1},
$$
where by definition we set 
$$
\mathsf{Tev}_{g-1,\ell,r_1,...,r_{k-1},0}= \mathsf{Tev}_{g-1,\ell,r_1,...,r_{k-1}}
$$
\end{proposition}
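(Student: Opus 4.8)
The plan is to adapt the degeneration argument in the proof of \cite[Proposition 7]{CPS}, using Proposition \ref{prop reduction} to control the extra ramification produced in the limit. I would begin by recording that $\oH:=\oH_{g,d,n,r_1,\ldots,r_k}$ is a smooth proper Deligne--Mumford stack by the theory of admissible covers (\cite{HM,acv}), and that $\tau:=\tau_{g,\ell,r_1,\ldots,r_k}$ maps it to the smooth proper target $\oM_{g,n}\times\omM_{0,n-r_{\tot}+k}$ of the same dimension; hence $\deg(\tau)$ --- equivalently the sum of the generic degrees of $\tau$ over the components of $\oH$ dominating the target --- equals the integral over $\oH$ of the pullback of the class of a point, computed for any point via refined Gysin pullback. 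I would take that point in the boundary: let $(C',p_\bullet)$ be a general smooth pointed curve of genus $g-1$, let $C_0$ be the $2$-nodal curve obtained by attaching a rational bridge $R\cong\bP^1$ to $C'$ as in \cite{CPS}, and pair $[C_0,p_\bullet]\in\oM_{g,n}$ with a general $[\overline D,q_\bullet]\in\omM_{0,n-r_{\tot}+k}$. The $\tau$-preimage of this point need not be finite, so $\deg(\tau)$ is realized as an excess-intersection (Euler class) contribution supported on that preimage; the genericity of $C'$ and of the $q_j$ should confine the preimage to the expected-dimensional part of $\oH$, so that this contribution is well behaved.

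Next I would describe that preimage. A point of it is an admissible cover $\pi\colon C_{\mathrm{src}}\to D_{\mathrm{tgt}}$ whose stabilized source is $C_0$ and whose stabilized target (after forgetting branch points) is $\overline D$; thus $D_{\mathrm{tgt}}$ is a ``main'' $\bP^1$ carrying the markings of $\overline D$ together with rational tails that get contracted under stabilization, and $C_{\mathrm{src}}$ consists of $C'$, the bridge $R$, and possibly further rational components mapping onto those tails. Restricting $\pi$ over the main component of $D_{\mathrm{tgt}}$ produces a generalized Tevelev cover of the genus $g-1$ curve $C'$, carrying one additional marked ramification cluster supported over the point where the bridge is attached; restricting over the tails produces rational covers of $\bP^1$, glued to the main piece along common --- a priori arbitrary --- ramification profiles, with the local picture of each node governed by the model $xy=a$, $uv=a^p$ from Definition \ref{def:hur_tev}, $p$ the ramification index. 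As in \cite{CPS}, the requirement that the whole configuration be expected-dimensional rigidifies the tail covers and leaves exactly two combinatorial types. In the first, the bridge absorbs one of the $r_k$ marked points of the last cluster, and the residual genus $g-1$ cover has last cluster of size $r_k-1$ with degree $d-1$ (so $\ell$ is unchanged); in the second, the two attaching nodes of the bridge become, after contracting $R$, two new marked points of the last cluster, and the residual genus $g-1$ cover has last cluster of size $r_k+1$ with degree $d$ (so $\ell$ increases by one). In both cases Proposition \ref{prop reduction} replaces the arbitrary ramification profiles at the glue points by the integers $r_k\mp1$, and the Euler-class contribution of the rational tails evaluates to $1$; summing gives $\mathsf{Tev}_{g-1,\ell,r_1,\ldots,r_{k-1},r_k-1}+\mathsf{Tev}_{g-1,\ell+1,r_1,\ldots,r_{k-1},r_k+1}$.

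The hard part will be the excess-intersection bookkeeping just sketched: pinning down precisely the components of the boundary preimage, computing their normal-bundle/Euler-class contributions from the local smoothing model of a node (the smoothing parameter of a node of $D_{\mathrm{tgt}}$ being the $p$-th power of that of the matching node of $C_{\mathrm{src}}$), and ruling out genuinely excess components; one must also confirm that the residual genus $g-1$ enumerative problems satisfy the transversality implicit in the definition of $\mathsf{Tev}$, again via the genericity of $C'$. Finally, the boundary case $r_k=1$ requires a separate check: there the last cluster disappears in the first summand, the marking $q_{n-r_{\tot}+k}$ carries no incidence condition and may be forgotten, and one verifies that the resulting count agrees with the convention set in the statement of the proposition.
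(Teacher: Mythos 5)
Your overall strategy (degenerate $C$ to the two‑nodal curve with a rational bridge and compute $\deg(\tau)$ by excess intersection along the boundary stratum, following \cite[Proposition 7]{CPS}) is the same as the paper's, but the part you defer as ``the hard part'' is the actual content of the proof, and what you assert about its outcome does not match the geometry. The boundary pullback is supported on \emph{divisorial} components of $\oH_{g,d,n,r_1,\ldots,r_k}$, while the gluing map $\xi_{\Gamma_0}\times 1$ has codimension $2$ (the bridge creates two nodes), so \emph{every} dominating component is excess of dimension one and contributes through $\{c(N)\cap s(\cdot)\}_{\dim-2}$, not through a finite count. The paper's analysis finds \emph{three} types of dominating pairs $(\Gamma,\Gamma')$, not two: type III gives $+\,b!\,\Tev_{g-1,\ell,r_1,\ldots,r_{k-1},r_k-1}$, while types I and II \emph{both} map to the $(\ell+1,r_k+1)$ problem, with excess contributions $-\,b!\,\Tev_{g-1,\ell+1,\ldots,r_k+1}$ and $+\,2\,b!\,\Tev_{g-1,\ell+1,\ldots,r_k+1}$ respectively; the recursion only emerges from the cancellation $-1+2=1$. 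Your claim that the expected-dimension requirement ``leaves exactly two combinatorial types'' whose ``Euler-class contribution of the rational tails evaluates to $1$'' is therefore not correct componentwise (one contribution is negative, another carries multiplicity $2$), and no argument is offered for it.

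A second, smaller misstep: you invoke Proposition \ref{prop reduction} to replace ``arbitrary ramification profiles at the glue points'' by $r_k\mp1$. That proposition compares Tevelev degrees with different ramification profiles imposed at \emph{marked points over marked target points}; it says nothing about covers glued along nodes, and it is not used inside the recursion in the paper. What actually happens is that the dominance/dimension analysis of the pairs $(\Gamma,\Gamma')$ (the analogue of Lemma \ref{contributions in the reduction}) forces the nodes joining the bridge to be unramified (index $1$) in all surviving components, so no reduction is needed; configurations with other node profiles simply fail to dominate $\oM_{\Gamma_0}\times\omM_{0,n-r_{\tot}+k}$. To complete the proof along your lines you would need to (i) classify the dominating $(\Gamma,\Gamma')$ precisely (recovering the three types), (ii) verify Cartesianity of the relevant square so the components enter with multiplicity one, and (iii) carry out the normal‑bundle/Segre‑class computation for each type as in \cite{CPS}, which produces the coefficients $-1$, $+2$, $+1$ whose sum gives the stated recursion.
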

The genus $0$ case is established by hand in \S\ref{The genus 0 case}. 

\begin{proposition} \label{prop for g=0}
For $\ell \geq 0$ and $r_1,...,r_k \in \mathbb{Z}_{\geq 1}$ positive integers such that the Equations \eqref{dimensional constraint}, \eqref{conditions on d}  and \eqref{condition on k} hold with 
$$
\mu_h=(\underbrace{1,...,1}_{r_h})
$$
for $h=1,...,k$. Then we have
$$
\mathsf{Tev}_{0,\ell,r_1,...,r_k}=1.
$$
\end{proposition}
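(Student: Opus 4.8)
The plan is to compute $\mathrm{deg}(\tau_{0,\ell,r_1,\dots,r_k})$ by hand. In genus $0$ the source and target of every admissible cover in a fibre of $\tau_{0,\ell,r_1,\dots,r_k}$ are forced to be $\mathbb{P}^1$; write $d=\ell+1$, so that here $n=2d+1$ and $b=2d-2$. The locus of degenerate admissible covers in $\oH_{0,d,n,r_1,\dots,r_k}$ (where $C$ or $D$ is singular, or where the branch points fail to be $b$ distinct points disjoint from the $q_j$) is a closed substack of strictly smaller dimension, so its image under $\tau_{0,\ell,r_1,\dots,r_k}$ is a proper closed subset; hence the fibre over a general point of $\oM_{0,n}\times\omM_{0,n-r_{\tot}+k}$ consists only of honest covers, i.e.\ degree $d$ morphisms $f\colon C\to D$ with $f(p_i)=q_{\sigma(i)}$ — here $\sigma(i)$ denotes the index of the marking of $D$ hit by $p_i$, cf.\ condition~\ref{incidence} of Definition~\ref{def:hur_tev} — each equipped with an ordering of its $b$ (distinct) branch points, and having no nontrivial automorphism since such would fix $\geq 3$ general points of both $C$ and $D$. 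Thus $\mathrm{deg}(\tau_{0,\ell,r_1,\dots,r_k})=b!\cdot N$, where $N$ is the number, counted with multiplicity, of degree $d$ morphisms $f\colon C\to D$ with $f(p_i)=q_{\sigma(i)}$ for all $i$; it remains to show $N=1$.

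Fixing coordinates on $D\cong\mathbb{P}^1$, degree $d$ morphisms $C\to D$ are the base-point-free pairs in $\mathbb{P}(H^0(C,\cO_C(d))^{\oplus 2})=\mathbb{P}^{2d+1}$, with $[F_0:F_1]$ corresponding to $p\mapsto[F_0(p):F_1(p)]$; writing $q_j=[c_j:1]$, the condition $f(p_i)=q_{\sigma(i)}$ is the hyperplane $\{F_0(p_i)=c_{\sigma(i)}F_1(p_i)\}\subset\mathbb{P}^{2d+1}$. Since $n=\dim\mathbb{P}^{2d+1}$, the crux is the following, which I also use in smaller degree: \emph{for general $(C,p_\bullet)$ and general $q_j$, the $n$ incidence hyperplanes are linearly independent}, and more generally for each $\delta\geq 0$ any collection of $\leq 2\delta+2$ incidence conditions at general points cuts out linearly independent hyperplanes on the $\mathbb{P}^{2\delta+1}$ of degree $\delta$ morphisms. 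Granting this, the $n$ hyperplanes meet transversally in a single point $x_0$.

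I would then show $x_0$ is base-point-free. If not, $x_0=[GF_0':GF_1']$ with $\deg G=e\geq 1$ and $[F_0':F_1']$ base-point-free of degree $d'=d-e$; then $f'=[F_0':F_1']$ satisfies $f'(p_i)=q_{\sigma(i)}$ for the at least $n-e=2d'+e+1$ indices $i$ with $p_i\notin Z(G)$, and $2d'+e+1>2d'+1$, so by the independence claim applied to degree $d'$ no such $f'$ exists — a contradiction. Hence $x_0$ is an honest degree $d$ morphism with $f(p_i)=q_{\sigma(i)}$, and by transversality it is counted in $N$ with multiplicity $1$ (and the $b!$ orderings of its branch points give $b!$ distinct smooth points of $\oH_{0,d,n,r_1,\dots,r_k}$). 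Therefore $N=1$ and $\mathsf{Tev}_{0,\ell,r_1,\dots,r_k}=\mathrm{deg}(\tau_{0,\ell,r_1,\dots,r_k})/b!=1$.

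The main obstacle is the independence claim; the subtlety is the groupings, since up to $r_h$ of the $p_i$ are forced to a common $q_j$ and the hyperplanes are not in generic position. Writing $\mathrm{ev}_p\in H^0(\cO_C(d))^\vee$ for evaluation at $p$, independence of the $n$ incidence hyperplanes is equivalent to linear independence of the vectors $v_i=\mathrm{ev}_{p_i}\otimes(1,-c_{\sigma(i)})\in H^0(\cO_C(d))^\vee\otimes\mathbb{C}^2$. These lie on the non-degenerate surface $S\subset\mathbb{P}^{2d+1}$ ruled by the degree $d$ rational normal curves $S_c=\{[\mathrm{ev}_p\otimes(1,-c)]\}$, with all points of the $h$-th group lying on the single ruling $S_{c_{n-r_{\tot}+h}}$; a relation $\sum_i\lambda_iv_i=0$ unwinds to the two relations $\sum_i\lambda_i\,\mathrm{ev}_{p_i}=0$ and $\sum_i\lambda_ic_{\sigma(i)}\,\mathrm{ev}_{p_i}=0$, which must be shown to force $\lambda=0$ for general $p_i$ and distinct $c_j$ — here $r_h=|\mu_h|\leq d$ from \eqref{conditions on d} guarantees that the $r_h$ vectors of the $h$-th group are themselves independent. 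I expect the cleanest route is to degenerate the $p_i$ within each group so that they collide, replacing the span of that group's $v_i$ by the corresponding osculating subspace of the ruling $S_{c_h}$ (still of dimension $r_h$), reducing the claim to a single confluent-Vandermonde/Wronskian determinant evaluated at a convenient configuration; alternatively one adds the $v_i$ to the span one at a time, using non-degeneracy of $S$ and of its rulings together with a secant-variety dimension count.
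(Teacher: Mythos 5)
The core of your argument—the linear independence of the $n=2d+1$ incidence hyperplanes in $\bP^{2d+1}$ for a general configuration $(p_\bullet,q_\bullet)$—is exactly the point you leave unproved: you reduce everything (existence of the cover, multiplicity one, and the base-point-freeness step) to this claim, and then close with ``I expect the cleanest route is\dots'', offering two possible strategies without carrying either out. So as written the proposal has a genuine gap at its crux. The claim is in fact true under \eqref{conditions on d} (dually, a relation among the $v_i$ amounts to polynomials $A,B$ of degree $\le d-1$ with $B(p_i)=c_{\sigma(i)}A(p_i)$ for all $i$, i.e.\ a rational function of degree $\le d-1$ taking the prescribed values $c_{\sigma(i)}$ at the $p_i$ away from a common zero locus of small degree, which a parameter count excludes for general $(p_\bullet,c_\bullet)$), but some such argument must actually be supplied, and your sketched confluent-Vandermonde determinant is not a standard one-variable statement that can simply be quoted.

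There is also an error in the auxiliary statement you invoke to rule out base points: ``for each $\delta\ge 0$, any collection of $\le 2\delta+2$ incidence conditions at general points cuts out linearly independent hyperplanes on $\bP^{2\delta+1}$'' is false as stated. If more than $\delta+1$ of the surviving points belong to one group (same target $q$), the functionals $\mathrm{ev}_{p_i}\otimes(1,-c_h)$ are automatically dependent, since the $\mathrm{ev}_{p_i}$ lie in the $(\delta+1)$-dimensional space $H^0(\bP^1,\cO(\delta))^\vee$; this situation genuinely arises in your step with $\delta=d'=d-e$ when $e\ge 2$ and some $r_h=d$. The conclusion you want there (no degree-$d'$ map satisfies $\ge 2d'+2$ of the conditions at general points) is still correct, but needs a different justification, e.g.\ that a nonconstant degree-$d'$ map has at most $d'$ preimages of each $q_j$ together with a dimension count over the space of such maps. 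For comparison, the paper's proof (Lemma~\ref{genus0_inj}) avoids any transversality statement: uniqueness is immediate because the ratio $\pi_1/\pi_2$ of two solutions has degree $\le 2d=n-1$ but takes the value $1$ at the $n$ points $p_i$, hence is constant; existence follows from the dimension of the incidence variety (cut out by $n$ hypersurfaces) combined with this injectivity; and since the count is set-theoretic, no multiplicity or reducedness of the fiber is needed. If you complete the independence lemma and repair the auxiliary claim, your route works, but it is strictly more work than the paper's.
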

Finally, in \S\ref{Explicit formulas}, we completely solve the recursion, obtaining the following closed formulas for the Tevelev degrees: 

\begin{theorem} \label{master formula}
Let $g \geq 0$, $\ell \in \mathbb{Z}$ and $\mu_1,...,\mu_k$ be vectors where $\mu_i \in \mathbb{Z}_{\geq 1}^{ r_i}$ with $r_i \geq1$. Assume that the Equations \eqref{dimensional constraint}, \eqref{conditions on d}  and \eqref{condition on k} hold. Then
\begin{align*}
\mathsf{Tev}_{g,l,\mu_1,...,\mu_k}= & \ 2^g -2\sum_{i=0}^{-\ell-2} {g \choose i} + \left(- \ell-k-2+\sum_{h=1}^k |\mu_h|\right){g \choose {-\ell-1}} \\
&+\left(\ell -k + \sum_{h=1}^k \delta_{|\mu_h|,1}\right) {g \choose {-\ell}}-\sum_{h=1}^k \sum_{i=- \ell+1}^{|\mu_h|- \ell -2} {g \choose i}.
\end{align*}
Here $\delta_{r_h,1}$ is the Kronecker delta.
\end{theorem}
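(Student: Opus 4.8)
The plan is to deduce the theorem from Propositions \ref{prop reduction}, \ref{prop recursion} and \ref{prop for g=0}. By Proposition \ref{prop reduction} one has $\mathsf{Tev}_{g,\ell,\mu_1,\ldots,\mu_k}=\mathsf{Tev}_{g,\ell,|\mu_1|,\ldots,|\mu_k|}$, so it is enough to prove the formula when $\mu_h=(\underbrace{1,\ldots,1}_{r_h})$ for all $h$, i.e.\ with $|\mu_h|$ replaced throughout by $r_h$; substituting $r_h=|\mu_h|$ into the resulting closed form then recovers the general statement, as $\sum_h r_h$, $\sum_h\delta_{r_h,1}$ and the nested sums turn into the corresponding expressions in $|\mu_h|$. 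Write $F(g,\ell;r_1,\ldots,r_k)$ for the right-hand side of this reduced formula, with the convention $\binom{g}{i}=0$ for $i<0$ or $i>g$.

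A useful reorganization comes first. In the single-group formulas of \cite[Theorem 12]{CPS} the coefficient of $\binom{g}{-\ell-1}$ equals $r-\ell-3$ in both cases $r=1$ and $r>1$, and the coefficient of $\binom{g}{-\ell}$ equals $\ell-1+\delta_{r,1}$; hence $\mathsf{Tev}_{g,\ell,r}=F(g,\ell;r)$ for every $r\ge1$, and comparing binomial coefficients one checks
\[
F(g,\ell;r_1,\ldots,r_k)=\sum_{h=1}^{k}F(g,\ell;r_h)-(k-1)\,F(g,\ell;1).
\]
Thus the theorem is equivalent to the identity
\[
\mathsf{Tev}_{g,\ell,r_1,\ldots,r_k}=\sum_{h=1}^{k}\mathsf{Tev}_{g,\ell,r_h}-(k-1)\,\mathsf{Tev}_{g,\ell,1},
\]
whose right-hand side involves only the already-known degrees of \cite{CPS}; this is the statement I would actually prove.

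I would prove this identity by induction on $g$, the case $k=1$ being trivial. When $g=0$, the conditions \eqref{conditions on d}, \eqref{condition on sum of r_h}, \eqref{condition on n} force $\ell\ge0$ and $r_h\le2\ell+1$, so Proposition \ref{prop for g=0} applies to each $\mathsf{Tev}_{0,\ell,r_h}$ and to $\mathsf{Tev}_{0,\ell,1}$, and both sides equal $1$ (since $k\cdot1-(k-1)\cdot1=1$). For the inductive step, apply Proposition \ref{prop recursion} to the left-hand side and then the induction hypothesis to the two resulting genus-$(g-1)$ terms, using the convention $\mathsf{Tev}_{g-1,\ell,r_1,\ldots,r_{k-1},0}=\mathsf{Tev}_{g-1,\ell,r_1,\ldots,r_{k-1}}$ (or $\mathsf{Tev}_{g-1,\ell,1}$ if $k=1$) when $r_k=1$. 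After collecting terms, the identity reduces to two elementary facts about the single-group degrees: that
\[
\mathsf{Tev}_{g-1,\ell,r}+\mathsf{Tev}_{g-1,\ell+1,r}-\mathsf{Tev}_{g,\ell,r}=\binom{g-1}{-\ell-1}-\binom{g-1}{-\ell-2}
\]
is independent of $r$, and that $\mathsf{Tev}_{g-1,\ell+1,1}-\mathsf{Tev}_{g-1,\ell+1,2}$ equals the same quantity. Both follow at once from the closed forms of \cite{CPS} via Pascal's rule $\binom{g}{i}=\binom{g-1}{i}+\binom{g-1}{i-1}$.

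The one place requiring real care — and the main obstacle — is that the recursion of Proposition \ref{prop recursion} can push parameters to the boundary of, or just outside, the range \eqref{conditions on d}--\eqref{condition on n} in which $\mathsf{Tev}$ is nonzero and the closed formulas hold: decreasing $r_k$ can hit $0$, increasing it can exceed $d$, and $\ell$ can shift past $0$. One must verify that the range conditions are preserved well enough under the recursion, and that in the finitely many boundary configurations that actually occur the conventions ($\mathsf{Tev}=0$ out of range, and $\mathsf{Tev}_{\cdots,0}=\mathsf{Tev}_{\cdots}$) are consistent with the formula — equivalently, that the \cite{CPS} expression, evaluated a bit past its range of validity, still vanishes where it should (e.g.\ $\mathsf{Tev}_{g,\ell,d+1}=0$ for all $\ell\ge0$, which one checks directly). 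This is a finite, somewhat tedious case analysis rather than a conceptual difficulty; the rest of the argument is the binomial bookkeeping indicated above.
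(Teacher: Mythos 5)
Your overall strategy coincides with the paper's: reduce to $\mu_h=(1,\ldots,1)$ via Proposition \ref{prop reduction}, then induct on $g$ using Proposition \ref{prop recursion} with Proposition \ref{prop for g=0} as base case. Your reorganization is a genuine (and pleasant) variant of the bookkeeping: the identity $F(g,\ell;r_1,\ldots,r_k)=\sum_h F(g,\ell;r_h)-(k-1)F(g,\ell;1)$ is correct, so the theorem is equivalent to $\Tev_{g,\ell,r_1,\ldots,r_k}=\sum_h\Tev_{g,\ell,r_h}-(k-1)\Tev_{g,\ell,1}$, and your two ``elementary facts'' do hold (by Pascal's rule) whenever all single-group terms lie in the range of validity of the CPS formulas; I checked both. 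The paper instead substitutes its closed formula directly into the recursion, which amounts to the same computation in different clothing.

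The genuine gap is in the boundary analysis, which you defer and, more importantly, misdiagnose. First, the recursion of Proposition \ref{prop recursion} only modifies $r_k$, so you must first invoke the symmetry of $\Tev_{g,\ell,r_1,\ldots,r_k}$ in the groups and order $r_1\le\cdots\le r_k$ (as the paper does); otherwise some $r_h$ with $h<k$ can equal $d$ and your inductive hypothesis and Fact~1 are applied to out-of-range single-group terms. Second, and crucially, in the case $r_k=d$ the term $\Tev_{g-1,\ell+1,r_1,\ldots,r_{k-1},r_k+1}$ is out of range (hence $0$), and your proposed remedy --- checking that the closed expression ``still vanishes where it should,'' e.g.\ at $r=d+1$ --- does not suffice: while the single-group formula does vanish at $(g-1,\ell+1,d+1)$, the multi-group identity does \emph{not} extend by zero there, since $\sum_{h<k}\Tev_{g-1,\ell+1,r_h}+0-(k-1)\Tev_{g-1,\ell+1,1}=\Tev_{g-1,\ell+1,r_1,\ldots,r_{k-1}}\neq 0$ in general. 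So the inductive hypothesis genuinely cannot be applied formally to that branch. What is actually needed (after discarding the easy subcase $r_1=\cdots=r_{k-1}=1$) is the observation that the constraints \eqref{condition on sum of r_h}, \eqref{condition on n} with $r_k=d$ force $\ell\ge 1$ and $r_h\le\ell+1$ for $h<k$, which kills the $r$-dependent binomial terms and shows $\Tev_{g,\ell,r}-\Tev_{g-1,\ell,r}$ is independent of $r$ on the relevant set; this is precisely the special argument the paper carries out for $r_k=d$, and it is a structured argument in a one-parameter family of configurations, not a finite check. With the ordering assumption and this case added, your proof closes.
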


In \S\ref{sec:schubert}, we give a second approach to the generalized Tevelev degrees, expanding on that of \cite{FarkasLian}. Namely, we consider a compact type degeneration of $C$ and apply the theory of Eisenbud-Harris limit linear series \cite{EH}. Some care is needed to handle incidence conditions imposed at marked points lying on different components of the degeneration of $C$. Our method essentially reduces the problem to a computation in genus 0, and yields the following:

\begin{theorem}\label{main_thm_schubert}
Let $g \geq 0$, $\ell \in \mathbb{Z}$ and $\mu_1,...,\mu_k$ be vectors where $\mu_h \in \mathbb{Z}_{\geq 1}^{ r_h}$ with $r_h \geq1$. Suppose that equations \eqref{dimensional constraint}, \eqref{conditions on d}  and \eqref{condition on k} hold. Then, the generalized Tevelev degree $\Tev_{g,\ell,\mu_1,\ldots,\mu_k}$ is equal to
\begin{align*}
    \sum_{(I,J):I\coprod J=\{1,2,\ldots,k\}}(-1)^{\# J}\cdot\int_{\Gr(2,d+1-\#J)}&\left(\prod_{h\in I}\sigma_{|\mu_h|-1}\prod_{h\in J}\sigma_{|\mu_h|-2}\right)\cdot\sigma_1^g\\
    \cdot&\left(\sum_{(i,j):i+j=k-3-\#J}\sigma_i\sigma_j\right).
\end{align*}
Here, we adopt the standard notation $\sigma_i=s_i(\cV)\in A^{i}(\Gr(2,d+1))$, where $\cV$ is the universal rank 2 subbundle on the Grassmannian $\Gr(2,d+1)$, so that $\sigma_{i}=0$ if $i<0$ or $i>d-1$.
\end{theorem}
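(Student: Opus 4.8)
The plan is to extend the limit linear series argument of \cite{FarkasLian}, which treats the case $k=1$, to arbitrary $k$.

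By Proposition \ref{prop reduction}, the generalized Tevelev degree depends on the $\mu_h$ only through the integers $|\mu_h|$, so we may assume each $\mu_h=(1,1,\ldots,1)$ has all entries equal to $1$; then $n=g+3+2\ell$ is independent of the $\mu_h$, and we are counting degree $d$ admissible covers $\pi\colon(C,p_1,\ldots,p_n)\to(D,q_\bullet)$ with $D\cong\bP^1$, unramified at the marked points, in which the last $r_{\tot}=\sum_h|\mu_h|$ of the $p_i$ are grouped into $k$ ``clumps,'' the $h$-th of size $|\mu_h|$, each clump being required to map to a common general point of $D$. Since the generalized Tevelev degree is deformation invariant, we compute it after specializing $(C,p_1,\ldots,p_n)$ to a general point of a suitable boundary stratum of $\oM_{g,n}$: following \cite{FarkasLian}, $C$ degenerates to a curve of compact type whose genus is concentrated in $g$ general elliptic tails attached to a rational spine, while the target $D$ is simultaneously degenerated into a chain of $\bP^1$'s separating the $k$ clump images and distributing the remaining incidence points. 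This realizes the total family as a family of admissible covers degenerating inside $\oH_{g,d,n,\mu_1,\ldots,\mu_k}$; in the central fibre the marked points of the source lie on the various components above the components of the degenerate target, and keeping track of how each $p_i$ maps into this target is precisely the ``care'' referred to in the introduction. A standard admissible-cover smoothing argument then reduces the count to counting limit linear series satisfying the induced conditions, each with its natural multiplicity.

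The heart of the argument is the analysis of the underlying limit $g^1_d$ via Eisenbud--Harris theory \cite{EH}. Since the tails and the degeneration of $D$ are generic, such a limit series should be determined by its aspect $V$ on the rational spine --- a point of $\Gr(2,d+1)$, or of $\Gr(2,d+1-\#J)$ once $\#J$ units of degree have been absorbed onto bubbles, as explained below. Each elliptic tail forces $V$ to have a ramification point at the node where it is attached (the unique behavior compatible with the Brill--Noether theory of the genus-one tail), which is a codimension-one condition; hence the tails contribute $\sigma_1^g$. The plain incidence conditions are accounted for in the genus-zero computation of the last step. Each clump condition --- which after the reduction above amounts to a ramification point of index $|\mu_h|$ --- can be realized in the limit in exactly two ways: directly on the spine, where it imposes $\dim\bigl(V\cap H^0(C_0,\cO(d-|\mu_h|x))\bigr)\ge 1$, of class $\sigma_{|\mu_h|-1}$ on $\Gr(2,d+1)$; or on a $\bP^1$-bubble that absorbs one unit of degree from the spine, leaving a residual condition of class $\sigma_{|\mu_h|-2}$ on $\Gr(2,d)$. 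Making this choice independently for each $h$ --- recording by $I$ the clumps realized on the spine and by $J$ those realized on a bubble --- and computing the associated smoothing multiplicity, which after the appropriate inclusion--exclusion should be exactly $(-1)^{\#J}$, produces the outer sum over $(I,J)$.

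For a fixed $(I,J)$, once the spine degree $d-\#J$, the tail conditions $\sigma_1^g$, and the clump conditions $\prod_{h\in I}\sigma_{|\mu_h|-1}\prod_{h\in J}\sigma_{|\mu_h|-2}$ have been imposed, what remains is a purely genus-zero problem: counting pencils of degree $d-\#J$ on the rational spine whose associated map sends the plain marked points to their prescribed images in the degenerate $D$. Carrying out the vanishing-order bookkeeping at the nodes of $D$ exactly as in \cite{FarkasLian} should identify the class cut out by these residual conditions as $\sum_{i+j=n-3-|\mu_{\tot}|+k-\#J}\sigma_i\sigma_j$ on $\Gr(2,d+1-\#J)$; when this count is zero-dimensional it equals $1$, consistently with Proposition \ref{prop for g=0}, and for $k=0$ one recovers the $r=1$ case of \cite[Theorem 1.3]{FarkasLian}. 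Summing over $(I,J)$ and integrating then gives the stated formula. I expect the main obstacle to be the second step: proving that the only limit linear series contributing are those arising from the rigid tail aspects and the forced bubbles over the clump images, that each such series satisfying the Schubert conditions regenerates to a genuine admissible cover with exactly the predicted multiplicity (an Eisenbud--Harris-style smoothing argument, made delicate here by incidence conditions straddling the nodes of the degenerate target), and that these multiplicities combine precisely into the sign $(-1)^{\#J}$.
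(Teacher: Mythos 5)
Your overall framing (reduce to $\mu_h=(1)^{r_h}$, degenerate $C$ to a rational spine with $g$ general elliptic tails, let the tails impose $\sigma_1^g$, and finish with a genus-zero Schubert computation) matches the paper's strategy, but the step that actually produces the alternating sum over $(I,J)$ is misidentified, and this is a genuine gap. You propose that each clump condition can be realized in the limit in two geometric ways -- on the spine (class $\sigma_{|\mu_h|-1}$) or on a bubble absorbing one unit of degree (class $\sigma_{|\mu_h|-2}$ on a smaller Grassmannian) -- with ``smoothing multiplicities'' combining to $(-1)^{\#J}$. That cannot work as stated: limit Tevelev points are honest limits of solutions and contribute with nonnegative (in fact, here, transverse, multiplicity one) contributions, so no signed count of actual degenerate covers can produce the $(-1)^{\#J}$. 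In the paper the clumped points are all placed on rational tails $R_h$ of the \emph{source} (the target $\bP^1$ is never degenerated; the $q_i$ stay general points), the tail aspects are shown to be uniquely determined, and the whole count is pushed onto the spine aspect together with a choice of two sections, i.e.\ a point of the complete collineation space $\Coll^1_d(\bP^1)=\bP(\cV\oplus\cV)$ over $\Gr(2,d+1)$. The signs arise afterwards, purely algebraically: the Schubert/relative-hyperplane cycles one writes down to impose the conditions also pick up spurious solutions in which the spine aspect has simple base points at some subset $J\subset\{1,\ldots,k\}$ of the attachment points $t_h$, and removing these by inclusion--exclusion over $J$ (after twisting down, which lands in $\Gr(2,d+1-\#J)$ with the condition degraded from $\sigma_{|\mu_h|-1}$ to $\sigma_{|\mu_h|-2}$) is exactly what yields the $(-1)^{\#J}$ terms. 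Your mechanism has no analogue of this, and without it the formula is not derivable from your setup.

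Two further ingredients of the actual proof are absent and would be needed to make any version of your plan rigorous. First, the factor $\sum_{i+j=n-3-|\mu_{\tot}|+k-\#J}\sigma_i\sigma_j$ is not obtained by bookkeeping vanishing orders along a degenerated chain target; it is the pushforward of the product of relative hyperplane classes from the $\bP^3$-bundle, namely $\varphi_*\bigl(c_1(\cO(1))^{n-r_{\tot}+k}\bigr)=s_{n-r_{\tot}+k-3}(\cV\oplus\cV)$, and degenerating the target into a chain (as you propose) would force you into an admissible-covers analysis that the paper deliberately avoids. Second, the naive way of imposing the order-$r_h$ incidence at $t_h$ produces positive-dimensional excess components inside the rank-one locus of the collineation space (where $f_0,f_1$ are proportional); the paper must replace that condition by the combination ``$V$ ramifies to order $r_h$ at $t_h$ \emph{and} one incidence condition'' (a $\sigma_{r_h-1}$ times a relative hyperplane) and then prove, via Hurwitz-space dimension counts, that the resulting intersection is transverse, avoids the rank-one locus, and that the limit points smooth uniquely. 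You flag transversality and smoothing as the expected difficulty, but the excess-intersection issue and its resolution -- which is where the specific shape of the formula comes from -- is not addressed at all in your proposal.
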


In the case $|\mu_1|=\cdots=|\mu_k|=1$, the formula recovers that of \cite[Theorem 1.2]{FarkasLian}.

We also show in \S\ref{sec:recover_recursion} that the recursion of Proposition \ref{prop recursion} and the genus 0 base case of Proposition \ref{prop for g=0} can be extracted from the formula of Theorem \ref{main_thm_schubert}.

\subsection{Acknowledgments}

Much of this work was completed during the \textit{Helvetic Algebraic Geometry Seminar} in Geneva in August 2021, the second author's visit to ETH Z\"{u}rich in August 2021, and the first author's visit to HU Berlin in November 2021. We thank both institutions for their continued support, and Gavril Farkas, Rahul Pandharipande, and Johannes Schmitt for many discussions about Tevelev degrees. We also thank the anonymous referee for helpful comments on the first draft.

It is a pleasure for the first author to thank the second author for having introduced him to the theory of limit linear series and to thank once more Rahul Pandharipande for his support and for having suggested him the problem.

A.C. was supported by SNF-200020-182181. C.L. was supported by an NSF postdoctoral fellowship, grant DMS-2001976, and the MATH+ incubator grant ``Tevelev degrees.'' 

\section{Reduction to unramified marked points} \label{Reduction of the problem}
Let $g \geq 0$ be a genus, $\ell \in \mathbb{Z}$ an integer and $\mu_1,...,\mu_k$ vectors with $\mu_h \in \mathbb{Z}_{\geq 1}^{ r_h}$ where $r_h \geq1$ for all $h=1,...,k$. Assume that the Equations \eqref{dimensional constraint}, \eqref{conditions on d}  and \eqref{condition on k} hold.

In this section, we prove that the generalized Tevelev degrees depend only on the \textit{total} incidence order over each marked point of the target. Namely, we find the following equality, which implies Proposition \ref{prop reduction}:
\begin{equation}\label{restatement prop}
\mathsf{Tev}_{g,\ell,\mu_1,...,\mu_k}=\mathsf{Tev}_{g,\ell,\mu_1,...,\mu_{k-1},(|\mu_k|)}
\end{equation}
where $(|\mu_k|)$ is a vector in $\mathbb{R}$.

Note that we may assume $r_k \geq 2$. The strategy is simple: we show that, upon coalescing the points $p_{n-r_k+1},\ldots,p_n$ to into the single point $p_{n-r_k+1}$, the incidence conditions at $p_{n-r_k+1},\ldots,p_n$ turn into a single incidence condition at $p_{n-r_k+1}$ with ramification order $|\mu_k|$.

Formally, let $\Gamma_0$ be the stable graph
\[
\Gamma_0 \ = \ 
\begin{tikzpicture}[baseline = -0.1 cm]
\draw (2,0) to (4,0);
\draw (2,0) -- (1.3,0.5);
\draw (2,0) -- (1.3,-0.5);

\draw (4,0) -- (4.7,0.5);
\draw (4,0) -- (4.7,-0.5);

\node at (1.1,0.5) {$p_1$};
\node at (0.9,-0.5) {$p_{n-r_k}$};
\node at (1.1,0) {$\vdots$};

\node at (5.4,0.5) {$p_{n-r_k+1}$};
\node at (4.9,-0.5) {$p_n$};
\node at (4.9,0) {$\vdots$};

\filldraw[white] (2,0) circle (0.4cm) node {$w_1$};
\filldraw[white] (4,0) circle (0.4cm) node {$w_2$};
\draw (2,0) circle (0.4cm) node {$g$};
\draw (4,0) circle (0.4cm) node {$0$};
\end{tikzpicture}
\]
corresponding to the boundary divisor of curves on $\oM_{g,n}$ with a rational tail containing $p_{n-r_k+1},\ldots,p_n$, and let $$\xi_{\Gamma_0}:\oM_{\Gamma_0}=\oM_{g,n-r_k+1}\times\omM_{0,r_k+1} \rightarrow \oM_{g,n}$$ be the usual gluing map. 

Following the notation of Definition \ref{def:hur_tev}, let $\overline{\mathcal{H}}_{0,|\mu_k|,\mu_k,(|\mu_k|)}$ be the Hurwitz space whose general point parametrizes covers $\pi:\bP^1\to\bP^1$ of degree $|\mu_k|$ sending $r_k$ marked points $p_{n-r_k+1},\ldots,p_n$ to a single point $q_{n-k}$ with ramification indices $e_{k,1},\ldots,e_{k,r_k}$, respectively, with an additional marked point $p$ of total ramification over the target. (We assume as usual that $\pi$ is otherwise simply branched.) Let $$
\varepsilon_{\mu_k}:\overline{\mathcal{H}}_{0,|\mu_k|,\mu_k,(|\mu_k|)}\to\omM_{0,r_k+1}
$$
be the map remembering the (stabilized) marked source.

The space $\overline{\mathcal{H}}_{0,|\mu_k|,\mu_k,(|\mu_k|)}$ also defines a boundary divisor $$\overline{\mathcal{H}}_{g,d,\mu_1,...,\mu_{k-1},(|\mu_k|)} \times \overline{\mathcal{H}}_{0,|\mu_k|,\mu_k,(|\mu_k|)}\to \overline{\mathcal{H}}_{g,d,\mu_1,...,\mu_k}$$ by gluing the source curves at the points of order $|\mu_k|$ ramification and gluing $d-|\mu_k|$ additional rational tails mapping isomorphically to the target, see also Lemma \ref{contributions in the reduction}.

We will show that there exists a commutative diagram
\begin{equation}
    \xymatrix{
    \displaystyle\coprod\overline{\mathcal{H}}_{g,d,\mu_1,...,\mu_{k-1},(|\mu_k|)} \times \overline{\mathcal{H}}_{0,|\mu_k|,\mu_k,(|\mu_k|)} \ar[r] \ar[d]_{\tau_{g,\ell,\mu_1,\ldots,\mu_{k-1},|\mu_k|}\times\varepsilon_{\mu_k}} & \overline{\mathcal{H}}_{g,d,\mu_1,...,\mu_k} \ar[d]^{\tau_{g,\ell,\mu_1,\ldots,\mu_k}} \\
    \oM_{g,n-r_k+1}\times\omM_{0,r_k+1}\times\omM_{0,k} \ar[r]^(.6){\xi_{\Gamma_0} \times 1} & \oM_{g,n}\times\omM_{0,k}}
\end{equation}
such that the induced map from the top left space to the cartesian product is birational onto the components of the fiber product that dominate $\oM_{g,n-r_k+k}\times\omM_{0,r_k+1}\times\omM_{0,n-r_k+k}$. Here, the disjoint union is over a certain set of combinatorial choices, as detailed in Lemma \ref{contributions in the reduction} and the two maps 
\begin{align*}
    \tau_{g,\ell,\mu_1,\ldots,\mu_k}&:\overline{\mathcal{H}}_{g,d,\mu_1,...,\mu_k}\to\oM_{g,n}\times\omM_{0,k}\\
    \tau_{g,\ell,\mu_1,\ldots,\mu_{k-1},|\mu_k|}&:\overline{\mathcal{H}}_{g,d,\mu_1,...,\mu_{k-1},(|\mu_k|)}\to\oM_{g,n-r_k+1}\times\omM_{0,k}
\end{align*}
are as in \S\ref{Generalized Tevelev degrees}. We will conclude by comparing their degrees.

We now proceed to the proof. We have a commutative diagram 


\begin{equation} \label{eqn:Cfibrediagram}
\begin{tikzcd}
 &\displaystyle\coprod_{(\Gamma,\Gamma')} \overline{\mathcal{H}}_{(\Gamma, \Gamma')} \arrow[d] \arrow[r,"\coprod \xi_{(\Gamma,\Gamma')}"]
 & \overline{\mathcal{H}}_{g,d,\mu_1,...,\mu_k}\arrow[d]\\
 &\displaystyle\coprod_{\widehat {\Gamma_0}} \oM_{\widehat \Gamma_0} \times \omM_{0,k} \arrow[d] \arrow[r,"\coprod \xi_{\widehat{ \Gamma}_0} \times 1"] & \oM_{g,n + b}\times \omM_{0,k} \arrow[d]\\
 & \oM_{\Gamma_0}\times \omM_{0,k} \arrow[r,"\xi_{\Gamma_0} \times 1"]& \oM_{g,n} \times \omM_{0,k}
\end{tikzcd}
\end{equation}

In the middle row, we have added back the data of the $b$ simple ramification points, so that the composite vertical arrow on the right is again $\tau_{g,\ell,\mu_1,...,\mu_k}$. 
The lower disjoint union on the left side of the diagram is over all stable graphs $\widehat{\Gamma}_0 $ that can be obtained by distributing $b$ legs to the two vertices of $\Gamma_0$. The lower square in the diagram is then not Cartesian, but the disjoint union of the $\oM_{\widehat \Gamma_0} \times \omM_{0,k}$ maps properly and birationally to the corresponding fiber product.

The upper left entry $\coprod\overline{\mathcal{H}}_{(\Gamma, \Gamma')}$ is the disjoint union of boundary divisors of $\overline{\mathcal{H}}_{g,d,\mu_1,...,\mu_k}$, where the index set of the disjoint union consists of
isomorphism classes of $\widehat \Gamma_0$-structures on $\Gamma$ (See \cite[Definition 2.5]{SvZ} ) such that the composition $$E(\widehat \Gamma_0 ) \subseteq E(\Gamma) \rightarrow E(\Gamma')$$ is surjective. Then, by \cite[Proposition 3.2]{Lian}, the upper square is Cartesian (a priori) \textit{on the level of closed points}, {and $\overline{\mathcal{H}}_{(\Gamma, \Gamma')}$ is identified with the underlying reduced space of the fiber product. (Note: our diagram differs from that of \cite[Proposition 3.2]{Lian} in that we have extra factors of $\omM_{0,k}$ parametrizing the marked target, on which the horizontal maps act by the identity; the Cartesianness of the upper square is preserved.)

Finally, we denote the composition
$$
\overline{\mathcal{H}}_{(\Gamma, \Gamma')} \rightarrow \oM_{\Gamma_0}\times \omM_{0,k}.
$$
by $\tau_{(\Gamma,\Gamma')}$.


Because we are interested in degrees of the vertical maps, we need only consider components $\overline{\mathcal{H}}_{(\Gamma, \Gamma')}$ mapping dominantly under $\tau_{(\Gamma,\Gamma')}$. We will see in Corollary \ref{cartesianity of our diagram} that, when restricted to such components, the upper part of diagram \eqref{eqn:Cfibrediagram} is in fact Cartesian. The following lemma shows that, up to relabelling of the ramification and branch legs, there is only one possible $(\Gamma,\Gamma')$ for which $\tau_{(\Gamma,\Gamma')}$ is dominant.

\begin{lemma}\label{contributions in the reduction}
The pairs $(\Gamma,\Gamma')$ yielding dominating components in Diagram \eqref{eqn:Cfibrediagram} 
have the following form:
\[
\begin{tikzpicture}[baseline = -0.1 cm]

\node at (-2,0) {$\Gamma=$};
\draw (2,0) to (4,0);
\node[blue] at (3,0.3) {$|\mu_k|$};


\draw (2,0) -- (1.3,0.5);
\draw (2,0) -- (1.3,-0.5);

\node at (5.4,0.5) {$p_{n-r_k+1}$};
\node at (4.9,-0.5) {$p_n$};
\node at (4.9,0) {$\vdots$};


\draw[black!60!green] (2,0) -- (1.6,-0.8);
\node[black!60!green] at (2,-0.8) {$\dots$};
\draw[black!60!green] (2,0) -- (2.4,-0.8);

\draw [black!60!green, decorate,decoration={brace,amplitude=5pt,mirror,raise=1pt}] (1.5,-0.9) -- (2.5,-0.9) ;
\node[black!60!green] at (2,-1.3) {$b-r_k+1$};


\draw (4,0) -- (4.7,0.5);
\draw (4,0) -- (4.7,-0.5);

\node at (1.1,0.5) {$p_1$};
\node at (0.9,-0.5) {$p_{n-r_k}$};
\node at (1.1,0) {$\vdots$};


\draw[black!60!green] (4,0) -- (3.6,-0.8);
\node[black!60!green] at (4,-0.8) {$\dots$};
\draw[black!60!green] (4,0) -- (4.4,-0.8);

\draw [black!60!green, decorate,decoration={brace,amplitude=5pt,mirror,raise=1pt}] (3.5,-0.9) -- (4.5,-0.9);
\node[black!60!green] at (4,-1.3) {$r_k-1$};


\filldraw[white] (2,0) circle (0.4cm) node {$w_1$};
\filldraw[white] (4,0) circle (0.4cm) node {$w_2$};

\draw (2,0) circle (0.4cm) node {$g$};
\draw (4,0) circle (0.4cm) node {$0$};

\node[blue] at (2,0.6) {$d$};
\node[blue] at (4,0.6) {$|\mu_k|$};


\draw[->] (3,-1.5) -- (3,-2);

\node at (-2,-3) {$\Gamma'=$};
\draw (2,-3) to (4,-3);

\draw (2,-3) -- (1.3,-2.5);
\draw (2,-3) -- (1.3,-3.5);

\node at (1.1,-2.5) {$q_1$};
\node at (1,-3.5) {$q_{k-1}$};
\node at (1.1,-3) {$\vdots$};


\draw[black!60!green] (2,-3) -- (1.6,-3.8);
\node[black!60!green] at (2,-3.8) {$\dots$};
\draw[black!60!green] (2,-3) -- (2.4,-3.8);

\draw [black!60!green, decorate,decoration={brace,amplitude=5pt,mirror,raise=1pt}] (1.5,-3.9) -- (2.5,-3.9) ;
\node[black!60!green] at (2,-4.3) {$b-r_k+1$};

\draw (4,-3) -- (4.7,-3);

\node at (4.9,-3) {$q_{k}$};

\draw[black!60!green] (4,-3) -- (3.6,-3.8);
\node[black!60!green] at (4,-3.8) {$\dots$};
\draw[black!60!green] (4,-3) -- (4.4,-3.8);

\draw [black!60!green, decorate,decoration={brace,amplitude=5pt,mirror,raise=1pt}] (3.5,-3.9) -- (4.5,-3.9) ;
\node[black!60!green] at (4,-4.3) {$r_k-1$};


\filldraw[white] (2,-3) circle (0.4cm) node {$w_1$};
\filldraw[white] (4,-3) circle (0.4cm) node {$w_2$};
\draw (2,-3) circle (0.4cm) node {$0$};
\draw (4,-3) circle (0.4cm) node {$0$};

\end{tikzpicture}
\]

Here, the green legs correspond to ramification and branch points, the black legs correspond to markings, the blue numbers over a vertex of  $\Gamma$ indicate the degree of the map when restricted to the corresponding vertex and the blue numbers over an edge indicate the ramification index at the correspond node. In particular, over the right vertex of $\Gamma'$, there are $d-|\mu_k|$ additional genus $0$ vertices of $\Gamma$, each with degree $1$, which are not drawn. Also, the picture does not depict the fact that our covers are subject to restrictions (iii) and (iv.c) appearing in the definition of $\overline{\mathcal{H}}_{g,d,\mu_1,....,\mu_k}$.
\end{lemma}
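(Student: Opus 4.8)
The plan is to classify, up to relabeling of the ramification and branch legs, the codimension-one boundary strata $\overline{\mathcal{H}}_{(\Gamma,\Gamma')}$ of $\overline{\mathcal{H}}_{g,d,n,\mu_1,\ldots,\mu_k}$ that dominate the divisor $\oM_{\Gamma_0}\times\omM_{0,n-r_{\tot}+k}$, combining the genericity of the genus $g$ part of the degenerate source curve with a Riemann--Hurwitz dimension count on the local Hurwitz spaces attached to the vertices of $\Gamma'$. Recall that we may assume $r_k\geq 2$. First, since $\tau_{g,\ell,\mu_1,\ldots,\mu_k}$ is proper and generically finite onto its target, which has the same dimension, any component of $\tau_{g,\ell,\mu_1,\ldots,\mu_k}^{-1}(\oM_{\Gamma_0}\times\omM_{0,n-r_{\tot}+k})$ dominating the base has codimension one in $\overline{\mathcal{H}}_{g,d,n,\mu_1,\ldots,\mu_k}$; equivalently, the general admissible cover it parametrizes has target $D$ with exactly two components, so $\Gamma'$ has exactly two vertices $w_1',w_2'$. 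Moreover, for $\tau_{(\Gamma,\Gamma')}$ to land in the \emph{specific} divisor $\oM_{\Gamma_0}\times\omM_{0,n-r_{\tot}+k}$, the stabilization of the source graph $\Gamma$ must be $\Gamma_0$: there is a distinguished genus $g$ vertex $v_1$ carrying $p_1,\ldots,p_{n-r_k}$ and a distinguished genus $0$ vertex $v_2$ carrying $p_{n-r_k+1},\ldots,p_n$, lying over $w_1'$ and $w_2'$ respectively, while every other vertex of $\Gamma$ is rational, carries at most two markings and nodes, and is contracted.

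Next, I would exploit the genericity. Having fixed a general point of the base, the curve $X_1$ underlying $v_1$ is a general $(n-r_k)$-pointed curve of genus $g$, so the local admissible cover over $w_1'$ must move in a family dominating $\oM_{g,n-r_k}$. Carrying out the dimension count for the local Hurwitz space over $w_1'$ --- of degree $d'\leq d$, genus $g$ covers of $\bP^1$ with the ramification prescribed by $\mu_1,\ldots,\mu_{k-1}$ at the markings, some profile over the node to the $w_2'$-side, and a number of free simple branch points --- and using that the total codimension of the stratum is exactly one while the number of branch legs is the fixed integer $b$, one finds $d'=d$, that there is no contracted vertex over $w_1'$, that the profile over the node is $(|\mu_k|,1,\ldots,1)$, and that exactly $b-r_k+1$ simple branch legs lie over $w_1'$. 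In particular the node joining $v_1$ to $C\setminus v_1$ has ramification index $|\mu_k|$, and the remaining $d-|\mu_k|$ preimages of that node in the $v_1$-part of $C$ are rational tails mapping isomorphically onto the $w_2'$-component of $D$. Riemann--Hurwitz on $v_2$ then forces the cover over $w_2'$ restricted to $v_2$ to have degree $|\mu_k|$, to be totally ramified over the node, to have profile $\mu_k$ over the common image of $p_{n-r_k+1},\ldots,p_n$, and to carry the remaining $r_k-1$ simple branch legs. This is exactly the configuration in the statement, with $\overline{\mathcal{H}}_{g,d,n,\mu_1,\ldots,\mu_{k-1},(|\mu_k|)}$ recording the cover over $w_1'$ together with its $d-|\mu_k|$ trivial tails, and $\overline{\mathcal{H}}_{0,r_k,r_k+1,\mu_k,(|\mu_k|)}$ recording the cover over $w_2'$.

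The main obstacle is the dimension count ruling out the ``larger'' configurations: one must show that inserting extra contracted rational components into $\Gamma$ (carrying branch legs, or sitting between $v_1$ and $v_2$), or letting the node at which $v_1$ meets $C\setminus v_1$ have ramification index below $|\mu_k|$ (so that several positive-degree components of $C$ lie over the $w_2'$-component of $D$), strictly drops $\dim\overline{\mathcal{H}}_{(\Gamma,\Gamma')}$ below $\dim-1$, or makes $\tau_{(\Gamma,\Gamma')}$ fail to dominate $\oM_{\Gamma_0}\times\omM_{0,n-r_{\tot}+k}$. The genericity of $X_1$ is what makes this work --- it forbids the cover over $w_1'$ from being ``cheaper'' than a dominant family over $\oM_{g,n-r_k}$ permits --- and some bookkeeping care is required because an admissible cover can have many nodes of $C$ over the single node of $D$, so the shape of $\Gamma$ is not rigidified by $\Gamma'$ alone; one handles the two vertices of $\Gamma'$ separately and then recombines.
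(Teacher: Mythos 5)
Your outline follows the same overall strategy as the paper (reduce to two-vertex target graphs, then pin down $\Gamma$ using the domination requirement together with dimension counts and Riemann--Hurwitz bookkeeping), and the configuration you arrive at is the correct one, but there are genuine gaps exactly where the content of the lemma lies. The step ``the stabilization of $\Gamma$ must be $\Gamma_0$, hence there is a genus $g$ vertex $v_1$ carrying $p_1,\ldots,p_{n-r_k}$ and a genus $0$ vertex $v_2$ carrying $p_{n-r_k+1},\ldots,p_n$, lying over $w_1'$ and $w_2'$ respectively'' is not a valid deduction. Stabilization happens only after forgetting the ramification legs, so a marked point may sit alone on a contracted rational tail and still land on the correct component of the stabilized source: for instance, $p_{n-r_k+1},\ldots,p_n$ could lie on $r_k$ distinct rational tails attached to the distinguished genus $0$ vertex, with their common image $q_{n-r_{\tot}+k}$ on the densely marked target component; such covers stabilize to $\Gamma_0$ with the correct marking distribution and have a two-component target, so nothing in your first two reductions excludes them. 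In the paper this configuration is eliminated only by an explicit count (it forces at least $a+3s+|\mu_k|-1+r_k$ simple ramification legs over the sparsely marked target vertex, exceeding the $n(w_1)-1=r_k$ available once one knows $n(w_1)=r_k+1$). Likewise you never exclude the possibility that the genus $g$ vertex lies over the target vertex carrying at most one marking (Case II of the paper's proof, again ruled out only by a branch-point count), nor contracted rational bridges between $v_1$ and $v_2$, which would place them over the \emph{same} vertex of $\Gamma'$.

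Second, the decisive quantitative assertions are stated rather than proved: ``carrying out the dimension count \ldots one finds $d'=d$, no contracted vertex over $w_1'$, profile $(|\mu_k|,1,\ldots,1)$ over the node, and exactly $b-r_k+1$ branch legs,'' and ``Riemann--Hurwitz on $v_2$ then forces \ldots.'' These statements essentially \emph{are} the lemma. The paper derives them by noting that the projection to $\oM_{g,n-r_k+1}\times\omM_{0,n-r_{\tot}+k}$ factors through the Hurwitz factor over one target vertex and the projection to $\omM_{0,r_k+1}$ through the other, so surjectivity forces equality of dimensions, in particular $n(w_1)=r_k+1$, after which all remaining constraints (the value of the node ramification index $a=d_0=|\mu_k|$, the distribution of the $b$ branch legs, the $d-|\mu_k|$ trivial tails) follow from counting ramification against this bound. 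Your closing paragraph correctly identifies these counts as ``the main obstacle'' but defers them; since every nontrivial exclusion in the classification rests on them, the proposal as written is an outline of the paper's argument rather than a proof.
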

\begin{proof}
We start with some preliminary observations:
\begin{enumerate}[label=(\roman*)]
    \item In order for the map $\tau_{(\Gamma,\Gamma')}$ to be surjective, the graph obtained from $\Gamma$ forgetting all the legs corresponding to ramification points and then stabilizing is canonically identified with $\Gamma_0$. In particular, the set $V(\Gamma)$ contains canonically defined vertices $v_g$ and $v_0$ of genus $g$ and $0$, respectively, corresponding to the vertices of $\Gamma_0$, which persist after the contraction. We also denote the degrees of $v_g$ and $v_0$ by $d_g$ and $d_0$, respectively.
    \item From the fact that the graph $\Gamma$ comes endowed with a $\widehat \Gamma_0$ structure such that the composition $E(\widehat \Gamma_0 ) \subseteq E(\Gamma) \rightarrow E(\Gamma')$ is surjective, we have that $\Gamma'$ has exactly one edge and two vertices. Denote by $w_0$ and $w_1$ the vertices of $\Gamma'$. 
    \item Every vertex $v$ of $\Gamma$ different from $v_0$ and $v_g$ cannot contain more than one ramification or marked point. Indeed, if $v$ contains multiple marked points, then the image of $\tau_{(\Gamma,\Gamma')}$ would be supported inside $$\partial(\oM_{\Gamma_0})\times\omM_{0,k}$$ where $\partial$ denotes the boundary. On the other hand, if $v$ contains a ramification point in addition to another ramification or marked point, then the fibers of $\tau_{(\Gamma,\Gamma')}$ must have positive dimension, because the positions of the ramification points of a cover do not influence its image under $\tau_{(\Gamma,\Gamma')}$. In either case, we have a contradiction of the surjectivity of $\tau_{(\Gamma,\Gamma')}$.
    \item In order for the map $\tau_{(\Gamma,\Gamma')}$ to be surjective, exactly one vertex of $\Gamma'$ contains at most one leg corresponding to a marked point of the target. Without loss of generality, take $w_1$ to be this vertex.
\end{enumerate}

Note (iii) implies that all edges of $\Gamma$ different from $v_0$ and $v_g$ are leaves, incident to one of $v_0$ or $v_g$ and no other vertices. In particular, $v_g$ and $v_0$ are joined by an edge $e$ of $\Gamma$ corresponding to a node whose ramification index we denote $a\in\mathbb{Z}_{\geq 1}$. Moreover, we have that $v_g$ and all of the leaves attached to $v_0$ map to one of $w_0,w_1$, and that $v_0$ and all of the leaves attached to $v_g$ map to the other vertex of $\Gamma'$.

We distinguish two cases. 

\vspace{10pt}
\noindent \textbf{Case I)} $v_g$ maps to $w_0$. \vspace{10pt}

Let $\oH_{w_0},\oH_{w_1}$ be the factors of $\oH_{(\Gamma,\Gamma')}$ corresponding to the cover over $w_0,w_1$, respectively. Denoting by $n_0,n_1$ be the valences of $w_0,w_1$, respectively, we have
\begin{align*}
    \dim(\oH_{w_0})&=n_0-3,\\
    \dim(\oH_{w_1})&=n_1-3.
\end{align*}

We will show via a parameter count that the only way for the map
$$
\tau_{(\Gamma,\Gamma')}: \oH_{(\Gamma,\Gamma')}=\oH_{w_0}\times\oH_{w_1} \to (\oM_{g,n-r_k+1} \times \omM_{0,k})\times \omM_{0,r_k+1}
$$
to be surjective is for $(\Gamma,\Gamma')$ to have the claimed form. Observe that the composition of $\tau_{(\Gamma,\Gamma')}$ with the projection to $\oM_{g,n-r_k+1} \times \omM_{0,k}$ does not depend on the factor $\oH_{w_1}$. Similarly, the composition of $\tau_{(\Gamma,\Gamma')}$ with the projection to $\omM_{0,r_k+1}$ does not depend on the factor $\oH_{w_0}$. 

It follows that
\begin{align*}
    \dim(\oH_{w_0})&\ge \dim(\oM_{g,n-r_k+1} \times \omM_{0,k})\\
    \dim(\oH_{w_1})&\ge \dim(\omM_{0,r_k+1}),
\end{align*}
and then that both inequalities are in fact equalities because the source and target of $\tau_{(\Gamma,\Gamma')}$ each have dimension $\dim-1$. In particular, we have $n(w_1)=r_k+1$.


Now, we claim that the legs corresponding to $p_{n-r_k+1},...,p_n$ are incident to the vertex $v_0$, and that the only edge attached to $v_0$ is $e$. Suppose instead that these legs lie on genus 0 vertices (distinct from $v_g$) attached to $v_0$ and mapping to $w_0$. Note that these vertices cannot contain legs corresponding ramification point, by (iii) above. Therefore, we have $r_k$ distinct vertices of genus 0 containing the legs corresponding to $p_{n-r_k+1},...,p_n$. Let also $s$ be the number of genus $0$ vertices (also distinct from $v_g$) mapped to $w_0$ with degree $2$, joined to $v_0$ by an edge with ramification index $2$ and carrying exactly one leg of $\Gamma$ corresponding to a ramification point. 

Then, the degree of $v_0$ would be at least
$$
d_0=a+ 2s + |\mu_k| 
$$
and the number of legs on $v_0$ corresponding to ramification points would be at least
\begin{equation*}
(2d_0-2)-(a-1)-(|\mu_k|-r_k) - s =a+3s+|\mu_k|-1+r_k,
\end{equation*}
which is strictly larger than the number $n(w_1)-1=r_k$ allowed. This proves the claim. 

We deduce that $d_g=d$ and $a=d_0 \geq |\mu_k|$. Finally, using again the fact that $n(w_1)=r_k+1$, we find that in fact $a=d_0=|\mu_k|$ and that there are no ramification points other than those attached to $v_0$ over $w_1$. It follows that $\Gamma \rightarrow \Gamma'$ must be of the claimed form.

\vspace{10pt}
\noindent \textbf{Case II)} $v_g$ maps to $w_1$. \vspace{10pt}

We show that this is not possible. Let $b$ be the total number of additional simple ramification points as before. Arguing as in the previous case, we find that $\dim(\oH_{w_1})=n(w_1)-3=b-r_k+1$.
We claim that the maximal number of legs corresponding to ramification points and lying over $w_1$ is $b-r_k+1$. This yields a contradiction, as in this case $\mathrm{dim}(\oH_{w_1}) \leq b-r_k+1+2-3=b-r_k$. 

To prove the claim, suppose that over $w_1$ there are exactly $s$ additional genus $0$ vertices with degrees equal to $2$ and containing each exactly $1$ simple ramification point. Also let $\gamma$ be equal to $|\mu_k|$ if the marked point on $w_1$ is $q_{k}$ and $0$ otherwise. Then the maximum number of ramification points over $w_1$ is at most
\begin{align*}
&\textcolor{white}{\leq}(2 d_g +2g-2) - (a-1) +s -\sum_{h=1}^{k-1}|\mu_h| + \sum_{h=1}^{k-1}r_h  \\
 &\leq 2d+2g-2-(a-1)-3s -2 \gamma -\sum_{h=1}^{k-1}|\mu_h| + \sum_{h=1}^{k-1}r_h \\
 &\leq 2d+2g-2+1-s-\gamma-|\mu_k| -\sum_{h=1}^{k-1}|\mu_h| + \sum_{h=1}^{k-1}r_h\\
 &=b-r_k+1-s-\gamma,
\end{align*}
where in the first inequality we use the fact that $d_g+2s +\gamma \leq d$ and in the second inequality the fact that $a+2s+ \gamma \geq |\mu_k|$.
\end{proof}



\begin{corollary} \label{cartesianity of our diagram}
The upper square in diagram \eqref{eqn:Cfibrediagram} is Cartesian when restricted to the components $\oH_{(\Gamma,\Gamma')}$ of Lemma \ref{contributions in the reduction}.
\end{corollary}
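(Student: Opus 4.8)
The plan is to promote the closed-point Cartesian-ness recorded in Diagram~\eqref{eqn:Cfibrediagram} to a scheme-theoretic (or stack-theoretic) Cartesian statement after restricting to the dominating components classified in Lemma~\ref{contributions in the reduction}. Recall that by construction $\overline{\mathcal{H}}_{(\Gamma,\Gamma')}$ is the reduced subspace of the fiber product $\overline{\mathcal{H}}_{g,d,n,\mu_1,\ldots,\mu_k}\times_{\oM_{g,n+b}\times\omM_{0,n-r_{\tot}+k}}(\oM_{\widehat\Gamma_0}\times\omM_{0,n-r_{\tot}+k})$, so the only thing in question is that this fiber product is already reduced along the relevant component, equivalently that it has the expected dimension there and that the gluing map from $\overline{\mathcal{H}}_{(\Gamma,\Gamma')}$ is an isomorphism rather than merely a bijection on points.

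First I would observe that, by Lemma~\ref{contributions in the reduction}, for a dominating component there is a \emph{unique} pair $(\Gamma,\Gamma')$ (up to relabelling the branch/ramification legs), of the explicit shape drawn there: a genus $g$ vertex $v_g$ of full degree $d$, a genus $0$ vertex $v_0$ of degree $|\mu_k|$ joined to it by an edge of ramification index $|\mu_k|$, plus $d-|\mu_k|$ degree-$1$ rational tails over $w_1$ and the appropriate distributions of branch legs. This pins down the combinatorics completely, so $\overline{\mathcal{H}}_{(\Gamma,\Gamma')}$ is identified with the product $\overline{\mathcal{H}}_{g,d,n,\mu_1,\ldots,\mu_{k-1},(|\mu_k|)}\times\overline{\mathcal{H}}_{0,r_k,r_k+1,\mu_k,(|\mu_k|)}$ appearing on the upper left of Diagram~(1). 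Next I would compute the dimension of the fiber product: since $\tau_{(\Gamma,\Gamma')}$ is dominant onto $\oM_{\Gamma_0}\times\omM_{0,n-r_{\tot}+k}$ (which has dimension $\dim-1$) and the gluing map $\xi_{\Gamma_0}\times 1$ has image of codimension $1$, the fiber product has pure dimension $\dim-1$, matching $\dim\overline{\mathcal{H}}_{(\Gamma,\Gamma')}$; combined with the fact that $\overline{\mathcal{H}}_{(\Gamma,\Gamma')}$ surjects onto each component of the fiber product with no collapsing (the admissible cover structure over a generic point of $\oM_{\Gamma_0}$ is rigid once the combinatorial type is fixed), one concludes that the map is finite and birational onto its image component.

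The main obstacle is the reducedness/smoothness check: a priori the fiber product could be non-reduced along $\overline{\mathcal{H}}_{(\Gamma,\Gamma')}$ because the gluing map $\xi_{\widehat\Gamma_0}\times 1$ into $\oM_{g,n+b}\times\omM_{0,n-r_{\tot}+k}$ meets the image of $\tau_{g,\ell,\mu_1,\ldots,\mu_k}$ with some multiplicity. To handle this I would argue transversality at a general point: by the hypothesis that all dominating components are generically smooth of the expected dimension (which is built into the setup and inherited here since the genus-$g$ factor is a Tevelev-type Hurwitz space), the image of $\tau_{g,\ell,\mu_1,\ldots,\mu_k}$ meets the boundary divisor $\xi_{\Gamma_0}(\oM_{\Gamma_0})\times\omM_{0,n-r_{\tot}+k}$ transversally at a general point of $\overline{\mathcal{H}}_{(\Gamma,\Gamma')}$, precisely because the corresponding smoothing parameter of the node $e$ (ramification index $|\mu_k|$) is a nonzero function on $\overline{\mathcal{H}}_{g,d,n,\mu_1,\ldots,\mu_k}$ whose vanishing cuts out the boundary with multiplicity one after passing to the étale chart adapted to the admissible-cover structure (this is exactly the local model in (vi)(a) of Definition~\ref{def:hur_tev}, where the base parameter $a$ smooths the node). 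Hence the scheme-theoretic fiber product is generically reduced of the expected dimension along $\overline{\mathcal{H}}_{(\Gamma,\Gamma')}$, and since it is also (being a fiber product of proper DM stacks) finite over that component and generically isomorphic to it, the reduced structure is the whole structure. This gives Cartesian-ness up to birational equivalence as claimed, and in fact on the nose after restricting to these components. Finally I would note that this is the input needed to compare degrees of the vertical maps in Diagram~(1), completing the reduction argument of \S\ref{Reduction of the problem}.
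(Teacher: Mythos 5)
Your argument is essentially the paper's: since $\oH_{(\Gamma,\Gamma')}$ is by construction the reduction of the fiber product, everything rests on reducedness, which you verify by the complete-local-ring/\'etale-chart observation that the boundary pulls back to the first power of the smoothing parameter $a$ of the unique ramified node --- exactly the computation the paper delegates to \cite[\S 3.2]{Lian}, and it is valid at \emph{every} point of $\oH_{(\Gamma,\Gamma')}$ because these covers have exactly one ramified node (the other $d-|\mu_k|$ nodes over the same branch point are unramified, so their parameters are forced to equal $a^{|\mu_k|}$ and contribute no nilpotents). The surrounding dimension counts, the appeal to generic smoothness of dominating components, and the passage from generic reducedness via ``finite and generically isomorphic'' are unnecessary (and the last of these is not a valid implication in general); the pointwise local computation already gives reducedness along the whole component.
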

\begin{proof}
We already have that the square in question from \eqref{eqn:Cfibrediagram} is Cartesian on the level of reduced spaces. On the other hand, a computation on the level of complete local rings shows that the fiber product in the upper square is reduced when restricted to the components $\oH_{(\Gamma,\Gamma')}$ because the covers parametrized by $\oH_{(\Gamma,\Gamma')}$ have exactly one ramified node, see \cite[\S 3.2]{Lian}.
\end{proof}

\begin{lemma} \label{degree varpsilon mu}
Fix $\mu=(e_1,...,e_r) \in \mathbb{Z}_{\geq 1}^r$ and let $\varepsilon_\mu:\oH_{0,|\mu|,(|\mu|),\mu} \rightarrow \omM_{0,r+1}$ be the morphism recalling the (stabilized) domain genus $0$ curve with all the markings, as defined above. Then,
$$
\mathrm{deg}(\varepsilon_{\mu})=(r-1)!.
$$
\end{lemma}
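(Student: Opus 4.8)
The plan is to compute a general fiber of $\varepsilon_\mu$ directly, writing $d:=|\mu|$ for brevity.

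\emph{Numerology and reduction to a fiber count.} Since $\pi$ maps a genus $0$ curve to a genus $0$ curve with degree $d$, and its ramification divisor has degree $d-1$ at $p_1$ and $e_i-1$ at $p_{i+1}$ for $i=1,\dots,r$, the Riemann--Hurwitz formula forces the number of simple branch points to be
$$b=(2d-2)-(d-1)-\sum_{i=1}^{r}(e_i-1)=r-1,$$
so that $\oH:=\oH_{0,d,r+1,(|\mu|),\mu}$ and $\omM_{0,r+1}$ are both of dimension $r-2$. As admissible cover spaces are smooth Deligne--Mumford stacks, once we show $\varepsilon_\mu$ is dominant, generic smoothness in characteristic $0$ implies that its degree equals the number of points in a general fiber, each counted once. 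Moreover a general point of $\omM_{0,r+1}$ avoids the image of the boundary $\partial\oH$ (which has dimension $r-3<r-2$) as well as the image of the locus of covers that are not simply branched away from $q_1$; over such a point every cover in the fiber parametrizes a map $\pi\colon\bP^1\to\bP^1$ of smooth curves with exactly $r-1$ distinct simple branch points besides $q_1$.

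\emph{Uniqueness of the cover with fixed source.} Fix a general $(\bP^1,p_1,\dots,p_{r+1})$ and let $\pi\colon(\bP^1,p_1,\dots,p_{r+1})\to(\bP^1,q_1,q_2)$ be any cover in $\oH$ with this source. Using the $\mathrm{PGL}_2$-action on the target, normalize so that $q_1=\pi(p_1)=\infty$ and $q_2=\pi(p_2)=0$. Total ramification of $p_1$ over $q_1$ forces $\pi^{-1}(\infty)=d\cdot p_1$ as divisors, and the ramification profile over $q_2$ (whose indices already sum to $d$) forces $\pi^{-1}(0)=\sum_{i=1}^{r}e_i\,p_{i+1}$. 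Choosing an affine coordinate $x$ on the source with $p_1\neq\infty$, these two conditions pin down
$$\pi(x)=c\cdot\frac{\prod_{i=1}^{r}(x-p_{i+1})^{e_i}}{(x-p_1)^{d}},\qquad c\in\mathbb{C}^{\times},$$
and the residual target automorphism $y\mapsto\lambda y$ acts simply transitively on the choices of $c$. Hence there is a unique isomorphism class of such cover once we forget the labels of the branch points; in particular $\varepsilon_\mu$ is dominant.

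\emph{Counting labels.} A point of $\oH$ over the image of $(\bP^1,p_i)$ is the cover $\pi$ above together with an ordering of its $r-1$ simple branch points. By the first paragraph, for a general source these are $r-1$ distinct points, so the fiber of $\varepsilon_\mu$ consists of exactly $(r-1)!$ points, each reduced by generic smoothness. Therefore $\deg(\varepsilon_\mu)=(r-1)!$.

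\textbf{Main obstacle.} The one step requiring genuine care is the general-position assertion used above: for a general source configuration, the ramification divisor of $\pi$ away from $p_1,\dots,p_{r+1}$ -- which a Riemann--Hurwitz count shows has degree $(2d-2)-(d-1)-(d-r)=r-1$ -- is reduced and $\pi$ takes $r-1$ distinct values on it, equivalently the generic cover in this family is simply branched away from $q_1$. This is standard but not formal; it can be established either by exhibiting a single explicit configuration $(p_i)$ with the desired behaviour (the condition being open), or by noting that the interior of $\oH$ is dense. Everything else is a routine parameter count.
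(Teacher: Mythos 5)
Your proof is correct and is essentially the paper's argument: your explicit formula $c\cdot\prod_{i=1}^{r}(x-p_{i+1})^{e_i}/(x-p_1)^{d}$ is exactly the paper's ``unique meromorphic function, up to scaling, with zeroes and poles prescribed by the $r+1$ marked points,'' and the factor $(r-1)!$ arises in both cases as the number of labelings of the simple branch (equivalently ramification) points. The genericity and transversality checks you add are fine but do not change the route.
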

\begin{proof}
Up to scaling, there is a unique meromorphic function with zeroes and poles prescribed by the $r+1$ marked points. Then, $(r-1)!$ is the number of ways the ways to label the ramification points of the resulting cover.
\end{proof}

\begin{proof}[Proof of Proposition \ref{prop reduction}]
We compare the Tevelev degrees arising from diagram \eqref{eqn:Cfibrediagram}. We have
\begin{align*}
\mathrm{deg}(\tau_{g,l,\mu_1,...,\mu_r})&= {b[g,l,\mu_1,...,\mu_r] \choose r_k-1}(r_k-1)! \ \mathrm{deg}(\tau_{g,l,\mu_1,...,\mu_{r-1},(|\mu_r|)}) \\
&= b[g,l,\mu_1,...,\mu_k]! \ \mathsf{Tev}_{g,l,\mu_1,...,\mu_{k-1},(|\mu_{k}|)}
\end{align*}
where the first factor counts the number of possible $(\Gamma,\Gamma')$ as in Lemma \ref{contributions in the reduction} and the second factor comes from Lemma \ref{degree varpsilon mu}. The multiplicities of the contributions from each $\oH_{(\Gamma,\Gamma')}$ are 1, owing to Corollary \ref{cartesianity of our diagram}. Equation \eqref{restatement prop} follows, so we are done.
\end{proof}

\section{Genus recursion} \label{Recursion via fiber geometry}
In this section, we prove the recursion of Proposition \ref{prop recursion}. The idea is exactly the same as that used in \cite[Proof of Proposition 7]{CPS}, which we briefly recall here for the reader's convenience.

Let $g \geq 1$ be a genus, $\ell \in \mathbb{Z}$ an integer and $r_1,...,r_k \in \mathbb{Z}_{\geq 1}$ positive integers. Set 
$$
\mu_h=(\underbrace{1,...,1}_{r_h})
$$
for $h=1,...,k$, and assume that the Equations \eqref{dimensional constraint}, \eqref{conditions on d}  and \eqref{condition on k} hold. Let $\Gamma_0$ be the graph
\[
\Gamma_0 \ = \ 
\begin{tikzpicture}[baseline = -0.1 cm]
\draw (2,0) to [out=45,in=135] (4,0);
\draw (2,0) to [out=-45,in=-135] (4,0);

\draw (2,0) -- (1.3,0.5);
\draw (2,0) -- (1.3,-0.5);

\draw (4,0) -- (4.9,0);

\node at (1.1,0.5) {$p_1$};
\node at (0.9,-0.5) {$p_{n-1}$};
\node at (1.1,0) {$\vdots$};

\node at (5.1,0) {$p_n$};

\filldraw[white] (2,0) circle (0.45cm) node {$w_1$};
\filldraw[white] (4,0) circle (0.45cm) node {$w_2$};
\small \draw (2,0) circle (0.45cm) node {$g-1$};
\draw (4,0) circle (0.45cm) node {$0$};
\end{tikzpicture}
.
\]
Form the commutative diagram
\begin{equation} \label{eqn:Cfibrediagram2}
\begin{tikzcd}
 &\displaystyle\coprod_{(\Gamma,\Gamma')} \overline{\mathcal{H}}_{(\Gamma, \Gamma')} \arrow[d] \arrow[r,"\coprod \xi_{(\Gamma,\Gamma')}"]
 & \overline{\mathcal{H}}_{g,d,r_1,...,r_k}\arrow[d]\\
 &\displaystyle\coprod_{\widehat {\Gamma_0}} \oM_{\widehat \Gamma_0} \times \omM_{0,k} \arrow[d] \arrow[r,"\coprod \xi_{\widehat{ \Gamma}_0} \times 1"] & \oM_{g,n + b}\times \omM_{0,k} \arrow[d]\\
 & \oM_{\Gamma_0}\times \omM_{0,k} \arrow[r,"\xi_{\Gamma_0} \times 1"]& \oM_{g,n} \times \omM_{0,k}
\end{tikzcd}
\end{equation}
and denote by $\tau_{(\Gamma,\Gamma')}$ the composite morphism 
$$
\overline{\mathcal{H}}_{(\Gamma, \Gamma')} \rightarrow \oM_{\Gamma_0}\times \omM_{0,k}
$$
as in the previous section.

\begin{lemma}
The pairs $(\Gamma,\Gamma')$ appearing in diagram \eqref{eqn:Cfibrediagram2} for which $\oH_{(\Gamma,\Gamma')}$ dominates $\oM_{\Gamma_0}\times \omM_{0,k}$ are the following: 
\begin{itemize}
    \item \textbf{type I}:
\[
\begin{tikzpicture}[baseline = -0.1 cm]

\node at (-2,0) {$\Gamma=$};
\draw (2,0) to [out=45,in=135] (4,0);
\draw (2,0) to [out=-45,in=-135] (4,0);
\node[blue] at (3,0.6) {$1$};
\node[blue] at (3,-0.2) {$1$};


\draw (2,0) -- (1.3,0.5);
\draw (2,0) -- (1.3,-0.5);

\node at (1.1,0.5) {$p_1$};
\node at (0.9,-0.5) {$p_{n-r_k}$};
\node at (1.1,0) {$\vdots$};


\draw[black!60!green] (2,0) -- (1.6,-0.8);
\node[black!60!green] at (2,-0.8) {$\dots$};
\draw[black!60!green] (2,0) -- (2.4,-0.8);

\draw [black!60!green, decorate,decoration={brace,amplitude=5pt,mirror,raise=1pt}] (1.5,-0.9) -- (2.5,-0.9) ;
\node[black!60!green] at (2,-1.3) {$b-2$};


\draw (4,0) -- (4.9,0);

\node at (5.1,0) {$p_n$};


\draw[black!60!green] (4,0) -- (3.8,-0.8);
\draw[black!60!green] (4,0) -- (4.2,-0.8);

\draw [black!60!green, decorate,decoration={brace,amplitude=5pt,mirror,raise=1pt}] (3.7,-0.9) -- (4.3,-0.9) ;
\node[black!60!green] at (4,-1.3) {$2$};


\filldraw[white] (2,0) circle (0.45cm) node {$w_1$};
\filldraw[white] (4,0) circle (0.45cm) node {$w_2$};

\small \draw (2,0) circle (0.45cm) node {$g-1$};
\draw (4,0) circle (0.45cm) node {$0$};

\node[blue] at (2,0.6) {$d$};
\node[blue] at (4,0.6) {$2$};


\draw[->] (3,-1.5) -- (3,-2);


\node at (-2,-3) {$\Gamma'=$};
\draw (2,-3) to (4,-3);

\draw (2,-3) -- (1.3,-2.5);
\draw (2,-3) -- (1.3,-3.5);

\node at (1.1,-2.5) {$q_1$};
\node at (1,-3.5) {$q_{k-1}$};
\node at (1.1,-3) {$\vdots$};


\draw[black!60!green] (2,-3) -- (1.6,-3.8);
\node[black!60!green] at (2,-3.8) {$\dots$};
\draw[black!60!green] (2,-3) -- (2.4,-3.8);

\draw [black!60!green, decorate,decoration={brace,amplitude=5pt,mirror,raise=1pt}] (1.5,-3.9) -- (2.5,-3.9) ;
\node[black!60!green] at (2,-4.3) {$b-2$};


\draw (4,-3) -- (4.7,-3);

\node at (5.3,-3) {$q_{k}$};


\draw[black!60!green] (4,-3) -- (3.8,-3.8);
\draw[black!60!green] (4,-3) -- (4.2,-3.8);

\draw [black!60!green, decorate,decoration={brace,amplitude=5pt,mirror,raise=1pt}] (3.7,-3.9) -- (4.3,-3.9) ;
\node[black!60!green] at (4,-4.3) {$2$};


\filldraw[white] (2,-3) circle (0.4cm) node {$w_1$};
\filldraw[white] (4,-3) circle (0.4cm) node {$w_2$};
\draw (2,-3) circle (0.4cm) node {$0$};
\draw (4,-3) circle (0.4cm) node {$0$};

\node at (7.7,-3) {$\mathrm{;}$};
\end{tikzpicture}
\]
\item \textbf{type II}:
\[
\begin{tikzpicture}[baseline = -0.1 cm]

\node at (-2,0) {$\Gamma=$};

\draw (2,0) to (4,1);
\draw (4,1) to (6,0);
\draw (2,0) to (4,-1);
\draw (4,-1) to (6,0);

\node[blue] at (3,0.7) {$1$};
\node[blue] at (3,-0.2) {$1$};
\node[blue] at (5,0.7) {$1$};
\node[blue] at (5,-0.2) {$1$};


\draw (2,0) -- (1.3,0.5);
\draw (2,0) -- (1.3,-0.5);

\node at (1.1,0.5) {$p_1$};
\node at (0.9,-0.5) {$p_{n-r_k}$};
\node at (1.1,0) {$\vdots$};


\draw[black!60!green] (2,0) -- (1.6,-0.8);
\node[black!60!green] at (2,-0.8) {$\dots$};
\draw[black!60!green] (2,0) -- (2.4,-0.8);

\draw [black!60!green, decorate,decoration={brace,amplitude=5pt,mirror,raise=1pt}] (1.5,-0.9) -- (2.5,-0.9) ;
\node[black!60!green] at (2,-1.3) {$b-2$};


\draw (4,-1) -- (4.6,-1.5);

\node at (4.8,-1.6) {$p_n$};


\draw[black!60!green] (6,0) -- (6.9,0.2);
\draw[black!60!green] (6,0) -- (6.9,-0.2);

\draw [black!60!green, decorate,decoration={brace,amplitude=5pt,raise=1pt}] (7,0.3) -- (7,-0.3) ;
\node[black!60!green] at (7.4,0) {$2$};


\filldraw[white] (2,0) circle (0.45cm) node {$w_1$};
\filldraw[white] (4,1) circle (0.45cm) node {$w_2$};
\filldraw[white] (4,-1) circle (0.45cm) node {$w_3$};
\filldraw[white] (6,0) circle (0.45cm) node {$w_4$};

\small \draw (2,0) circle (0.45cm) node {$g-1$};
\draw (4,1) circle (0.45cm) node {$0$};
\draw (4,-1) circle (0.45cm) node {$0$};
\draw (6,0) circle (0.45cm) node {$0$};

\node[blue] at (2,0.6) {$d$};
\node[blue] at (4,1.6) {$1$};
\node[blue] at (4,-0.4) {$1$};
\node[blue] at (6,0.6) {$2$};


\draw[->] (4,-1.8) -- (4,-2.3);


\node at (-2,-3) {$\Gamma'=$};

\draw (2,-3) to (4,-3);
\draw (4,-3) to (6,-3);

\draw (2,-3) -- (1.3,-2.5);
\draw (2,-3) -- (1.3,-3.5);

\node at (1.1,-2.5) {$q_1$};
\node at (0.5,-3.5) {$q_{k-1}$};
\node at (1.1,-3) {$\vdots$};


\draw[black!60!green] (2,-3) -- (1.6,-3.8);
\node[black!60!green] at (2,-3.8) {$\dots$};
\draw[black!60!green] (2,-3) -- (2.4,-3.8);

\draw [black!60!green, decorate,decoration={brace,amplitude=5pt,mirror,raise=1pt}] (1.5,-3.9) -- (2.5,-3.9) ;
\node[black!60!green] at (2,-4.3) {$b-2$};


\draw (4,-3) -- (4.6,-3.5);

\node at (4.8,-3.7) {$q_{k}$};


\draw[black!60!green] (6,-3) -- (6.9,-2.8);
\draw[black!60!green] (6,-3) -- (6.9,-3.2);

\draw [black!60!green, decorate,decoration={brace,amplitude=5pt,raise=1pt}] (7,-2.7) -- (7,-3.3) ;
\node[black!60!green] at (7.4,-3) {$2$};


\filldraw[white] (2,-3) circle (0.4cm) node {$w_1$};
\filldraw[white] (4,-3) circle (0.4cm) node {$w_2$};
\filldraw[white] (6,-3) circle (0.4cm) node {$w_3$};

\draw (2,-3) circle (0.4cm) node {$0$};
\draw (4,-3) circle (0.4cm) node {$0$};
\draw (6,-3) circle (0.4cm) node {$0$};

\node at (7.7,-3) {$\mathrm{;}$};

\end{tikzpicture}
\]
\item \textbf{type III}:
\[
\begin{tikzpicture}[baseline = -0.1 cm]

\node at (-2,0) {$\Gamma=$};

\draw (2,0) to (4,1);
\draw (4,1) to (6,0);
\draw (2,0) to (4,-1);
\draw (4,-1) to (6,0);

\node[blue] at (3,0.7) {$1$};
\node[blue] at (3,-0.2) {$1$};
\node[blue] at (5,0.7) {$1$};
\node[blue] at (5,-0.2) {$1$};


\draw (4,1) -- (3.05,1.2);
\draw (4,1) -- (3.4,1.8);

\node at (2.9,1.2) {$p_1$};
\node at (3.3,2) {$p_{n-1}$};
\draw[thick,dotted] (3,1.5) -- (3.1,1.7);


\draw[black!60!green] (4,1) -- (5.05,1.2);
\draw[black!60!green] (4,1) -- (4.6,1.8);
\draw[thick,dotted,black!60!green] (4.85,1.45) -- (4.7,1.65);
\draw[black!60!green,decorate,decoration={brace,amplitude=10pt,raise=1pt},yshift=0pt] (4.7,1.9) -- (5.15,1.3);

\node[black!60!green] at (5.75,1.9) {$b-4$};


\draw (4,-1) -- (4.6,-1.5);

\node at (4.8,-1.6) {$p_n$};


\draw[black!60!green] (6,0) -- (6.9,0.2);
\draw[black!60!green] (6,0) -- (6.9,-0.2);

\draw [black!60!green, decorate,decoration={brace,amplitude=5pt,raise=1pt}] (7,0.3) -- (7,-0.3) ;
\node[black!60!green] at (7.4,0) {$2$};


\draw[black!60!green] (2,0) -- (1.1,0.2);
\draw[black!60!green] (2,0) -- (1.1,-0.2);

\draw [black!60!green, decorate,decoration={brace,amplitude=5pt,raise=1pt,mirror}] (1,0.3) -- (1,-0.3) ;
\node[black!60!green] at (0.6,0) {$2$};


\filldraw[white] (2,0) circle (0.45cm) node {$w_1$};
\filldraw[white] (4,1) circle (0.45cm) node {$w_2$};
\filldraw[white] (4,-1) circle (0.45cm) node {$w_3$};
\filldraw[white] (6,0) circle (0.45cm) node {$w_4$};

\draw (2,0) circle (0.45cm) node {$0$};
\small \draw (4,1) circle (0.45cm) node {$g-1$};
\draw (4,-1) circle (0.45cm) node {$0$};
\draw (6,0) circle (0.45cm) node {$0$};

\node[blue] at (2,0.6) {$2$};
\footnotesize \node[blue] at (4,1.6) {$d-1$};
\node[blue] at (4,-0.4) {$1$};
\node[blue] at (6,0.6) {$2$};


\draw[->] (4,-1.8) -- (4,-2.3);


\node at (-2,-3) {$\Gamma'=$};

\draw (2,-3) to (4,-3);
\draw (4,-3) to (6,-3);


\draw[black!60!green] (2,-3) -- (1.1,-2.8);
\draw[black!60!green] (2,-3) -- (1.1,-3.2);

\draw [black!60!green, decorate,decoration={brace,amplitude=5pt,raise=1pt,mirror}] (1,-2.7) -- (1,-3.3) ;
\node[black!60!green] at (0.6,-3) {$2$};


\draw (4,-3) -- (3.6,-3.8);
\node at (4,-3.8) {$\dots$};
\draw (4,-3) -- (4.4,-3.8);

\node at (3.5,-4) {$q_1$};
\node at (4.5,-4) {$q_{k}$};


\draw[black!60!green] (4,-3) -- (5.05,-2.8);
\draw[black!60!green] (4,-3) -- (4.6,-2.2);
\draw[thick,dotted,black!60!green] (4.85,-2.55) -- (4.7,-2.35);
\draw[black!60!green,decorate,decoration={brace,amplitude=10pt,raise=1pt},yshift=0pt] (4.7,-2.1) -- (5.15,-2.7);

\node[black!60!green] at (5.75,-2.1) {$b-4$};


\draw[black!60!green] (6,-3) -- (6.9,-2.8);
\draw[black!60!green] (6,-3) -- (6.9,-3.2);

\draw [black!60!green, decorate,decoration={brace,amplitude=5pt,raise=1pt}] (7,-2.7) -- (7,-3.3) ;
\node[black!60!green] at (7.4,-3) {$2$};


\filldraw[white] (2,-3) circle (0.4cm) node {$w_1$};
\filldraw[white] (4,-3) circle (0.4cm) node {$w_2$};
\filldraw[white] (6,-3) circle (0.4cm) node {$w_3$};

\draw (2,-3) circle (0.4cm) node {$0$};
\draw (4,-3) circle (0.4cm) node {$0$};
\draw (6,-3) circle (0.4cm) node {$0$};

\node at (7.7,-3) {$\mathrm{.}$};

\end{tikzpicture}
\]
\end{itemize} 
Conventions here are the same as in Lemma \ref{contributions in the reduction}.

Moreover, the upper square in diagram \eqref{eqn:Cfibrediagram2} is Cartesian when restricted to these components $\oH_{(\Gamma,\Gamma')}$.
\end{lemma}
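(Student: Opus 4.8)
The plan is to adapt, essentially verbatim, the argument used in the proof of Lemma \ref{contributions in the reduction} (itself modeled on \cite[Proof of Proposition 7]{CPS}): first extract the structural constraints that surjectivity of $\tau_{(\Gamma,\Gamma')}$ forces on the pair $(\Gamma,\Gamma')$, then pin down the finitely many surviving combinatorial types by a vertex-by-vertex dimension count, and finally deduce Cartesianity from the reducedness computation of \cite[\S 3.2]{Lian}.

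First I would record the structural observations, exactly as in \emph{loc.\ cit}. For $\tau_{(\Gamma,\Gamma')}$ to be surjective, the graph obtained from $\Gamma$ by deleting the legs corresponding to ramification points and stabilizing must be canonically $\Gamma_0$; hence $V(\Gamma)$ contains distinguished vertices $v_{g-1}$ and $v_0$ of genus $g-1$ and $0$, persisting under this contraction, of some degrees $d_{g-1}$ and $d_0$. Moreover, no vertex of $\Gamma$ other than $v_{g-1}$ and $v_0$ may carry more than one marked point, and no vertex carrying a ramification leg may carry any further ramification or marked point, with the single exception of genus-$0$ degree-$2$ components carrying exactly two ramification legs, for which the cover is rigid; otherwise the fibers of $\tau_{(\Gamma,\Gamma')}$ acquire positive dimension (the positions of the ramification points being forgotten by $\tau$) or the image slips into a deeper boundary stratum of $\oM_{\Gamma_0}\times\omM_{0,n-r_{\tot}+k}$. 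Since $\Gamma'$ is a stable genus-$0$ graph and the composition $E(\widehat\Gamma_0)\subseteq E(\Gamma)\to E(\Gamma')$ is surjective, $\Gamma'$ is a tree with one or two edges; correspondingly $\Gamma$ is assembled from $v_{g-1}$, $v_0$, rational tails of degree $\le 2$ each carrying a single ramification leg or a single marking, the requisite trivial (degree-$1$, unmarked) rational tails, and, for each edge of $\Gamma'$, a chain of genus-$0$ degree-$\le 2$ components realizing it.

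Next I would run the dimension count. Writing $n_w$ for the number of special points of $w\in V(\Gamma')$, one has $\dim\oH_{(\Gamma,\Gamma')}=\sum_{w\in V(\Gamma')}(n_w-3)$, and surjectivity forces this to be at least $\dim(\oM_{\Gamma_0}\times\omM_{0,n-r_{\tot}+k})$, where $\oM_{\Gamma_0}\cong\oM_{g-1,n+1}\times\oM_{0,3}$. Furthermore, the projection of $\tau_{(\Gamma,\Gamma')}$ onto the $\oM_{g-1,n+1}$-factor (recording the genus-$(g-1)$ piece of the stabilized source) factors through the single factor $\oH_w$ with $w$ the image of $v_{g-1}$, while the projection onto $\omM_{0,n-r_{\tot}+k}$ factors through the factors over the vertices of $\Gamma'$ carrying the markings $q_j$; requiring both composites to be dominant forces all nodes of $\Gamma$ to be unramified, determines $d_{g-1}\in\{d-1,d\}$ and $d_0\in\{1,2\}$, and pins down the exact distribution of the $b$ ramification legs, the marking $p_n$, and the trivial rational tails. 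Splitting into cases by the image of $v_{g-1}$ in $\Gamma'$ and by whether the two edges of $\Gamma_0$ are carried by a single edge of $\Gamma'$ (type I) or by two distinct edges (types II and III, according as $v_{g-1}$ lies over an end or the middle vertex of the chain $\Gamma'$), one recovers exactly the three displayed pictures, checks directly that each is indeed dominant, and verifies that every other combinatorial possibility fails the bound. For the last assertion, Cartesianity of the upper square of \eqref{eqn:Cfibrediagram2} over the components of types I--III follows exactly as in Corollary \ref{cartesianity of our diagram}: since none of these covers has a ramified node, the complete-local-ring computation of \cite[\S 3.2]{Lian} shows that the fiber product in the upper square is reduced along $\oH_{(\Gamma,\Gamma')}$, hence coincides with it.

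The hard part will be the exhaustiveness of the case analysis: I expect the main obstacle to be ruling out configurations in which $v_{g-1}$ carries a node of intermediate ramification index, has degree strictly between $d-1$ and $d$, or in which ramification legs are absorbed onto additional genus-$0$ components in a way that still formally meets the global dimension bound. As in Lemma \ref{contributions in the reduction}, the way to dispatch these is to compare, vertex by vertex, the number of ramification legs that Riemann--Hurwitz forces onto each vertex against the number that the dimension of the corresponding moduli factor can accommodate, which after some arithmetic leaves only types I--III.
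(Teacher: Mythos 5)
Your proposal is correct and takes essentially the same route as the paper, whose entire proof of this lemma is the remark that the argument of \cite[Proposition 7]{CPS} --- i.e.\ the combination of structural constraints on $(\Gamma,\Gamma')$, a vertex-by-vertex Riemann--Hurwitz versus moduli-dimension comparison, and the reducedness computation of \cite[\S 3.2]{Lian}, exactly as in Lemma \ref{contributions in the reduction} and Corollary \ref{cartesianity of our diagram} --- carries over verbatim. The only caveat is presentational: your preliminary claim that genus-$0$ degree-$2$ components with two ramification legs are the \emph{single} admissible exception to the no-extra-ramification rule is itself a consequence of the dimension count you rightly identify as the workhorse (other locally rigid configurations, e.g.\ with ramified nodes, are only eliminated there), not an a priori structural fact, but this does not change the argument.
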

\begin{proof}
The proof follows exactly the lines of the proof of \cite[Proposition 7]{CPS}.
\end{proof}

To compute the degree of $\tau_{g,d ,n,r_1,...,r_k}:\oH_{g,d,r_1,...,r_k}\to \oM_{g,n}\times\omM_{0,k}$, the excess intersection formula gives

\begin{equation*}
    \deg(\tau_{g,d ,n,r_1,...,r_k})=\sum_{(\Gamma,\Gamma')} \deg\left(\left\{ c(N_{(\Gamma,\Gamma')}) \cap s(\overline{\mathcal{H}}_{(\Gamma, \Gamma')},\overline{\mathcal{H}}_{g,d,r_1,...,r_k}) \right\}_{\mathrm{dim}-2}\right)
\end{equation*}
where the sum is over all the pairs $(\Gamma,\Gamma')$ appearing in diagram \eqref{eqn:Cfibrediagram2}, the line bundle $N_{(\Gamma,\Gamma')}=\tau_{(\Gamma,\Gamma')}^* N_{\xi_{\Gamma_0} \times 1}$ is the pullback under $\tau_{(\Gamma,\Gamma')}$ of the normal bundle of $\xi_{\Gamma_0} \times 1$ and the terms in the sum are all in homological dimension $\mathrm{dim}-2$. On the right hand side, $\deg$ denotes the degree of the given class given upon pushforward to $\oM_{\Gamma_0}\times \omM_{0,k}$, as a multiple of the fundamental class.

Following the computations done in \cite[Proposition 7]{CPS}, we see that:

\begin{itemize}
\item from the $ (\Gamma, \Gamma')$ of type I,  we get the contribution 
$$
- b[g,l,r_1,...,r_k]! \ \mathsf{Tev}_{g-1,l+1,r_1,...r_{k-1},r_k+1},
$$
\item from the $ (\Gamma, \Gamma')$ of type II, we get the contribution
$$
2 b[g,l,r_1,...,r_k]! \ \mathsf{Tev}_{g-1,l+1,r_1,...r_{k-1},r_k+1},
$$
\item and from the $ (\Gamma, \Gamma')$ of type III, we get the contribution
$$
b[g,l,r_1,...,r_k]! \ \mathsf{Tev}_{g-1,l,r_1,...r_{k-1},r_k-1}.
$$
\end{itemize}
Proposition \ref{prop recursion} follows by summing these three terms. 

\section{Genus 0} \label{The genus 0 case}
Now, we consider the genus $0$ case. The proof here differs slightly from that of the analogous \cite[Proof of Proposition 8]{CPS}. 

Let $g=0$, $\ell \geq 0$ and $r_1,...,r_k \in \mathbb{Z}_{\geq 1}$ positive integers such that the Equations \eqref{dimensional constraint}, \eqref{conditions on d}  and \eqref{condition on k} hold with 
$$
\mu_h=(\underbrace{1,...,1}_{r_h})
$$
for $h=1,...,k$. 

Let us first explain the argument. A map $\pi:\bP^1\to\bP^1$ of degree $d$ is given by a pair of polynomials $s_0,s_1$ of degree $d$ up to simultaneous scaling. We may parametrize the coefficients of the $f_i$ by the projective space $\mathbb{P}(H^0(\mathbb{P}^1,\mathcal{O}(d)) \oplus H^0(\mathbb{P}^1,\mathcal{O}(d)))\cong\bP^{2d+1}$. Then, the $n=2d+1$ incidence conditions on $f$ are cut out by hyperplanes $\bP^{2d+1}$, which are expected to intersect in a single point giving a unique map $\pi$ of degree $d$. The following shows that this is indeed the case when the $p_i,q_j$ are general.

Formally, consider the closed subscheme 
$$
V_{d,r_1,...,r_k} \subset \mathbb{P}(H^0(\mathbb{P}^1,\mathcal{O}(d)) \oplus H^0(\mathbb{P}^1,\mathcal{O}(d)) ) \times (\mathbb{P}^1)^n \times (\mathbb{P}^1)^{k} 
$$
defined by the condition that the $\mathbb{C}$-point
$$
([s_0:s_1],p_1,...,p_n,q_1=[q_1^0:q_1^1],...,q_{k}=[q_{k}^0:q_{k}^1])
$$
of $ \mathbb{P}(H^0(\mathbb{P}^1,\mathcal{O}(d)) \times H^0(\mathbb{P}^1,\mathcal{O}(d)) ) \times (\mathbb{P}^1)^n \times (\mathbb{P}^1)^{k}
$ belongs to $V_{d,r_1,...,r_k}$ if and only if $q_{j}^1s_0(p_i)-q_{j}^0s_1(p_i)=0$ for $1 \leq j \leq k$ and $n- \sum_{h=j}^k r_h +1 \leq i \leq n- \sum_{h=j+1}^{k} r_h$.

Also, let 
$$
V_{d,r_1,...,r_k}^\circ \subset V_{d,r_1,...,r_k}
$$
be the open subscheme consisting of points 
$$
(\pi=[s_0:s_1],p_1,...,p_n,q_1,...,q_{k})
$$
where the morphism $\pi$ has exactly degree $d$, is simply branched (i.e., the ramification index of $\pi$ is $\leq 2$ at every point of $\mathbb{P}^1$ and different ramification points in the domain are mapped to different branch points in the target) and the branch points of $\pi$ are distinct from the $q_1,...,q_{k}$. 

Clearly,
$$
V_{d,r_1,...,r_k}^\circ \neq \emptyset.
$$

We have the following commutative triangle:
\begin{equation} \label{def of p}
\begin{tikzcd} 
 V_{d,r_1,...,r_k} \arrow[swap]{dr}{p} \arrow[r,hook] &\mathbb{P}(H^0(\mathbb{P}^1,\mathcal{O}(d)) \oplus H^0(\mathbb{P}^1,\mathcal{O}(d)) ) \times (\mathbb{P}^1)^n \times (\mathbb{P}^1)^{k} \arrow{d}{\mathrm{projection}}\\
& (\mathbb{P}^1)^n \times (\mathbb{P}^1)^{k} 
\end{tikzcd}
\end{equation}

The next lemma implies that $\tau_{0,\ell,r_1,...,r_k}$ has degree $1$.

\begin{lemma}\label{genus0_inj}
The restriction $p|_{V_{d,r_1,...,r_k}^\circ}$ of the map in Diagram \ref{def of p} to $V_{d,r_1,...,r_k}^\circ$ is injective on closed points. Moreover, the set-theoretic image $p({V_{d,r_1,...,r_k}^\circ})$ is dense in $(\mathbb{P}^1)^{n} \times (\mathbb{P}^1)^{k}$.
\end{lemma}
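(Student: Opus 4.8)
The plan is to prove the two assertions of Lemma \ref{genus0_inj} separately, both via elementary linear algebra on the spaces of sections $H^0(\mathbb{P}^1,\mathcal{O}(d))$, since the whole point of working in genus $0$ is that a degree $d$ cover $\mathbb{P}^1\to\mathbb{P}^1$ is literally a point of $\mathbb{P}^{2d+1}$ and the incidence conditions are literally linear. First I would recast the defining conditions of $V_{d,r_1,\ldots,r_k}$: fixing the points $p_i$ and $q_j$, each incidence requirement $q_i^1 s_0(p_i)-q_i^0 s_1(p_i)=0$ (and likewise the conditions of type (2)) is a single linear equation in the $2d+2$ coefficients of $(s_0,s_1)$. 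There are exactly $n=2d+1$ such equations, one for each $p_i$ (the points $p_i$ with $i\geq n-r_{\mathrm{tot}}+1$ contributing equations that refer to the shared target point $q_{n-\mathrm{tot}+j}$). So the fiber $p^{-1}(p_1,\ldots,p_n,q_1,\ldots)$ is the projectivization of the kernel of an explicit $(2d+1)\times(2d+2)$ matrix $M$ depending on the chosen points.

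For injectivity on $V^\circ$, I would argue that for a point of $V^\circ$ the matrix $M$ has full rank $2d+1$, so its kernel is $1$-dimensional and the fiber is a single reduced point; this gives injectivity of $p|_{V^\circ}$ directly. The key observation making the rank maximal is that a cover $\pi$ lying in $V^\circ$ is honestly of degree $d$, simply branched, with branch points disjoint from the $q_j$: if the kernel had dimension $\geq 2$ we could find a second, linearly independent pair $(s_0',s_1')$ satisfying the same incidences, and then the pencil $\lambda(s_0,s_1)+\mu(s_0',s_1')$ would give a $1$-parameter family of degree $\leq d$ maps all satisfying the $n$ incidence conditions; one then derives a contradiction — for instance a generic member of the pencil would have to agree with $\pi$ at the $p_i$'s with prescribed values $q_i$, but $s_0 t_1 - s_1 t_0$ (with $(t_0,t_1)$ another member) is a polynomial of degree $\leq 2d$ forced to vanish at all $n=2d+1$ points $p_i$, hence identically, so the two maps coincide. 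This is clean enough that I expect no real obstacle here, only the bookkeeping that the $p_i$ are distinct and (for the type (2) conditions) that the shared-image structure still yields one independent equation per point.

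For the density statement, I would instead exhibit a single point of $V^\circ$ whose image under $p$ lies in a Zariski-dense (in fact open, nonempty) locus, and then use that $p$ restricted to $V^\circ$ is a morphism from an irreducible (or at least well-understood) variety whose image therefore contains a dense open set, or more simply argue dimension-theoretically: $V_{d,r_1,\ldots,r_k}$ is cut out by $n=2d+1$ equations inside a space of dimension $(2d+1) + n + (n-r_{\mathrm{tot}}+k)$, and on the open set $V^\circ$ the fibers of $p$ are $0$-dimensional by the injectivity just proved, so $\dim V^\circ = n + (n - r_{\mathrm{tot}} + k) = \dim\big((\mathbb{P}^1)^n\times(\mathbb{P}^1)^{n-r_{\mathrm{tot}}+k}\big)$; since $V^\circ$ is nonempty and $p|_{V^\circ}$ has finite fibers onto a space of the same dimension, the image is dense. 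The one point requiring care is that this dimension count presupposes $V_{d,r_1,\ldots,r_k}$ has no component of excess dimension meeting $V^\circ$, which again follows from the fiberwise rank computation: over the (open, nonempty) locus in $(\mathbb{P}^1)^n\times(\mathbb{P}^1)^{n-r_{\mathrm{tot}}+k}$ where $M$ has full rank, $p$ is a $\mathbb{P}^0$-bundle, so the total space over that locus is irreducible of the expected dimension and contains $V^\circ$ as a dense open.

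The main obstacle I anticipate is not any deep geometry but making sure the linear-algebra argument for maximal rank of $M$ is genuinely uniform in the combinatorial data $(r_1,\ldots,r_k)$: the equations coming from points $p_i$ mapping to a common target point $q_{n-r_{\mathrm{tot}}+j}$ all involve the same $(q^0,q^1)$, and I must check that for general configurations these still impose $r_h$ independent conditions rather than collapsing — this is where the genericity of the $p_i$ (they are distinct, so the evaluation functionals $s\mapsto s(p_i)$ are independent as long as their number is $\leq d+1$, which is guaranteed by condition \eqref{conditions on d}: $|\mu_h|=r_h\leq d$) gets used. Once that is in place, injectivity, the fiber-bundle structure over the full-rank locus, and the density statement all follow formally.
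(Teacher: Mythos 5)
Your proposal is correct and follows essentially the same route as the paper: your determinant polynomial $s_0t_1-s_1t_0$ of degree $\leq 2d$ vanishing at the $n=2d+1$ points $p_i$ is just a reformulation of the paper's argument that the quotient $\pi_1/\pi_2$ has degree at most $2d$ yet takes the value $1$ at all $n$ points, hence is constant. The density claim is likewise proved in the paper exactly by your dimension count ($V^\circ_{d,r_1,\ldots,r_k}$ nonempty, cut out by $n$ equations, with zero-dimensional fibers of $p$ over it by injectivity), so no substantive difference.
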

\begin{proof}
First, we prove injectivity. Let
$$
\pi_1,\pi_2: \mathbb{P}^1 \rightarrow \mathbb{P}^1
$$
be two degree $d$ maps in the fiber under $p$ of $(p_1,...,p_n,q_1,...,q_{k})$. Then, the quotient map
$$
f=\frac{\pi_1}{\pi_2}: \mathbb{P}^1 \rightarrow \mathbb{P}^1
$$
has degree at most $2d=n-1$ and the preimage of $1$ under $f$ contains at least the $n$ points $p_1,...,p_n$. Therefore, $f$ is the constant map equal to $1$ and $\pi_1=\pi_2$. This shows that $p|_{V_{d,r_1,...,r_k}^0}$ is injective on closed points. 

The second claim follows from the fact that 
$$
\mathrm{dim}(V_{d,r_1,...,r_k}^\circ)\geq \mathrm{dim}\big( (\mathbb{P}^1)^{n} \times (\mathbb{P}^1)^{k} \big)
$$
combined with the first claim.
\end{proof}

\begin{proof}[Proof of Proposition \ref{prop for g=0}]
Lemma \ref{genus0_inj} shows that a general choice of points on the source and target curve gives rise a unique cover $\pi:\bP^1\to\bP^1$ counted in $\Tev_{0,\ell,r_1,\ldots,r_k}$, so the conclusion is immediate.
\end{proof}

\section{Explicit solution to the recursion} \label{Explicit formulas} 
\subsection{Results for \texorpdfstring{$\ell\geq 0$}{l>=0}}
We start with case $\ell \geq 0$, in which our formulas are simpler.
\begin{proposition}
Suppose $g \geq 0$, $\ell \geq 0$ and $r_1,...,r_k \in \mathbb{Z}_{\geq 1}$. Assume that the Equations \eqref{dimensional constraint}, \eqref{conditions on d}  and \eqref{condition on k} hold with 
$$
\mu_h=(\underbrace{1,...,1}_{r_h})
$$
for $h=1,...,k$. Then, we have
\begin{equation*}
\mathsf{Tev}_{g,l,r_1,...,r_k}=
\begin{cases}
2^g- \displaystyle\sum_{h=1}^k \displaystyle\sum_{j=0}^{r_h-2} {g \choose j} &\text{for } \ell =0,
\\  \mathsf{Tev}_{g,0,r_1-\ell,...,r_k-\ell} & \text{for } \ell < \max(r_1,...,r_k),
\\ 2^g  & \text{for } \ell\geq \max(r_1,...,r_k).
\end{cases}
\end{equation*}
Here, for $\ell < \max(r_1,...,r_k)$, we set:
$$
\mathsf{Tev}_{g,0,r_1-\ell,...,r_k-\ell}:=\mathsf{Tev}_{g,0,r_{i_1}-\ell,...,r_{i_t}-\ell,1,...,1}
$$
where the indices $h \in \{i_1,...,i_t\}$ are those for which $r_h > \ell$ and at the end there are $g+3-\sum_{h=1}^t (r_{i_h} -\ell)$ numbers $1$.
\end{proposition}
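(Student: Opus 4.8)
The plan is to prove the closed formula by induction on $g$, using the genus recursion of Proposition \ref{prop recursion} together with the genus $0$ base case of Proposition \ref{prop for g=0} and the reduction result of Proposition \ref{prop reduction}. Since all the $\mu_h$ here are $(1,\ldots,1)$ of length $r_h$, Proposition \ref{prop reduction} already reduces us to the statement as written; but it will also be convenient to note that many of the $r_h$ can be collapsed. First I would dispose of the stable range $\ell \geq \max(r_1,\ldots,r_k)$ directly: here each term $\sum_{j=0}^{r_h - 2}\binom{g}{j}$ in the master formula's analogue vanishes because $r_h - 2 < \ell$ forces us (after the recursion) into the regime where all binomial corrections disappear, and one checks $\Tev = 2^g$ either by citing the agreement with virtual Tevelev degrees from \cite{bp}, or better, by showing the recursion closes on $\Tev_{g,\ell,r_1,\ldots,r_k}=2^g$: indeed $\Tev_{g-1,\ell,r_1,\ldots,r_k-1}$ and $\Tev_{g-1,\ell+1,r_1,\ldots,r_k+1}$ are each still in the stable range (for the first, $r_k - 1 \geq \ell$ may fail only if $r_k = \ell + 1$, in which case one uses the convention linking it to fewer marked points, or the $\ell = \max$ boundary), so by induction each equals $2^{g-1}$ and they sum to $2^g$.

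Next I would handle the case $\ell = 0$, which is the genuine computational heart. Here the claimed formula is $\Tev_{g,0,r_1,\ldots,r_k} = 2^g - \sum_{h=1}^k \sum_{j=0}^{r_h - 2}\binom{g}{j}$. The base case $g = 0$ gives $2^0 - \sum_h \sum_{j=0}^{r_h-2}\binom{0}{j}$; since $\binom{0}{j} = 0$ for $j \geq 1$ and $\binom 0 0 = 1$, the inner sum is $1$ precisely when $r_h \geq 2$ and $0$ when $r_h = 1$, so the formula reads $1 - \#\{h : r_h \geq 2\}$, which disagrees with Proposition \ref{prop for g=0} unless all $r_h = 1$. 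This tells me the formula as literally stated must be understood after the collapsing from Proposition \ref{prop reduction}: one first replaces the tuple $(r_1,\ldots,r_k)$ by collapsing—or rather, the inductive bookkeeping must track that the recursion's $r_k - 1 = 0$ case feeds back into a problem with one fewer marked point via the stated convention $\Tev_{g-1,\ell,r_1,\ldots,r_{k-1},0} = \Tev_{g,\ell,r_1,\ldots,r_{k-1}}$ (or $\Tev_{g,\ell,1}$ if $k=1$). So the induction step for $\ell = 0$ is: apply Proposition \ref{prop recursion} to peel off $r_k$, giving $\Tev_{g,0,\ldots,r_k} = \Tev_{g-1,0,\ldots,r_k-1} + \Tev_{g-1,1,\ldots,r_k+1}$; the second summand has $\ell = 1$ so by the (separately established, see below) middle case of the Proposition it equals $\Tev_{g-1,0,\ldots,r_k+1-1} = \Tev_{g-1,0,\ldots,r_k}$ after shifting, actually $\Tev_{g-1,0,r_1-1,\ldots,r_k}$—I need to be careful that the shift $r_h \mapsto r_h - \ell$ is applied to \emph{all} coordinates. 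Then the two genus-$(g-1)$ terms are combined using Pascal's identity $\binom{g-1}{j} + \binom{g-1}{j-1} = \binom{g}{j}$ on the binomial sums, after matching up the ranges $\sum_{j=0}^{r_h - 2}$ and $\sum_{j=0}^{r_h - 3}$ (shifted); the $2^{g-1} + 2^{g-1} = 2^g$ handles the leading term.

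Finally, for the intermediate case $0 < \ell < \max(r_1,\ldots,r_k)$, I would prove $\Tev_{g,\ell,r_1,\ldots,r_k} = \Tev_{g,0,r_1-\ell,\ldots,r_k-\ell}$ (with the convention that coordinates dropping to $\leq 0$ are simply deleted, or absorbed via the $k=1$ clause) by induction on $\ell$: one step of Proposition \ref{prop recursion} sends $(g,\ell,\ldots,r_k)$ to $(g-1,\ell,\ldots,r_k-1)$ and $(g-1,\ell+1,\ldots,r_k+1)$; the hard part will be verifying that the recursive structure is compatible with the shift—i.e. that decrementing $g$ and simultaneously shifting by $\ell$ lands on the right instances of the $\ell=0$ formula, including correctly handling the boundary cases where some $r_h - \ell$ equals $0$ or $1$ and the convention in Proposition \ref{prop recursion} kicks in, and where $r_k + 1$ may push a coordinate past $\max$ into the stable regime. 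I expect the main obstacle to be exactly this boundary bookkeeping: keeping the conventions for $r_h = 0$, the collapsing of $\mu$-tuples, and the transition between the three regimes all consistent across the induction, rather than any single hard computation. Once the regimes are correctly aligned, each individual step is just Pascal's identity plus a routine check that the inequalities \eqref{conditions on d}, \eqref{condition on sum of r_h}, \eqref{condition on n} propagate. I would organize the write-up as three lemmas (one per regime) proven in the order stable range, then $\ell = 0$, then the interpolation, so that each can invoke the previous.
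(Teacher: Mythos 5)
Your overall strategy (induction on $g$ via Proposition \ref{prop recursion} with the genus $0$ base case of Proposition \ref{prop for g=0}) is the same as the paper's, but as written there are genuine problems. First, the ``disagreement'' you find at $g=0$, $\ell=0$ is spurious: when $g=\ell=0$ we have $d[g,\ell]=1$, so inequality \eqref{conditions on d} forces $r_h=|\mu_h|\le 1$, i.e.\ $r_h=1$ for all $h$, and the formula as literally stated gives $1$, in agreement with Proposition \ref{prop for g=0}. No reinterpretation ``after collapsing'' is needed, and building the argument on that reinterpretation muddies the statement you are actually proving. Second, your organization is circular as described: the $\ell=0$ step at genus $g$ invokes the intermediate case (at genus $g-1$) to rewrite $\mathsf{Tev}_{g-1,1,r_1,\ldots,r_k+1}$, while the intermediate case is to be proved afterwards; moreover ``induction on $\ell$'' for the intermediate case is not well-founded, since one application of Proposition \ref{prop recursion} produces terms with the same $\ell$ and with $\ell+1$ --- it never decreases $\ell$. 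The fix is what the paper does: note that $\ell=0$ always falls under $\ell<\max(r_1,\ldots,r_k)$ (as $r_h\ge 1$), and run a single induction on $g$ proving all regimes of the statement simultaneously, so that at genus $g$ you may freely use every clause of the statement at genus $g-1$.

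Third, there is a boundary case you do not address and which does not reduce to ``a coordinate passing $\max$'': when $r_k=d[g,\ell]$, one has $d[g-1,\ell+1]<r_k+1$, so $\mathsf{Tev}_{g-1,\ell+1,r_1,\ldots,r_{k-1},r_k+1}=0$ by definition and the inductive hypothesis cannot be applied to that summand at all. The paper handles this separately: condition \eqref{condition on sum of r_h} then forces $r_h<\ell+1$ for all $h<k$ (and in particular $\ell\ge 0$), and one checks directly that the corresponding block of the formula, namely $2^{g-1}$ minus the full sum $\sum_{j=0}^{g-1}\binom{g-1}{j}$, vanishes, so the Pascal-identity bookkeeping still closes. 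Your plan should include this verification explicitly; with it, and with the corrected induction scheme, the remaining steps (persistence of \eqref{conditions on d}, \eqref{condition on sum of r_h}, \eqref{condition on n} under the recursion, and Pascal's identity to merge the two genus-$(g-1)$ contributions) go through as you describe.
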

\begin{proof}
We start by noticing that for $\ell=\max(r_1,...,r_k)$, the two definitions of the right hand side coincide. The proof of the proposition is by induction on $g$. 

{\tiny{$\blacksquare$}} Suppose that \textbf{$g=0$}. Then, by Proposition \ref{prop for g=0}, the left hand side is equal to $1$, while for the left hand side we have to distinguish three cases: 
\begin{itemize} 
 \item If $\ell=0$, then $(d,n)=(1,3)$, and thus $k =  3$ and $r_h=1$ for all $h \leq k$. The right hand side is then equal to $1$.
 \item If $\ell < \max(r_1,...,r_k)$, then there are at most 3 and at least 1 indices $h \in \{1,...,k\}$ for which $r_h>\ell$. Moreover, for these $h$, it must be $r_h=\ell +1$. The right hand side is therefore equal to $\mathsf{Tev}_{0,0,1,1,1}$, which is equal to $1$ by Proposition \ref{The genus 0 case}.
 \item If $\ell \geq \max(r_1,...,r_k)$, then we have $2^g=1$ on the right hand side.
\end{itemize}

{\tiny{$\blacksquare$}} Suppose that $g \geq 1$. Then, we distinguish two cases:
\begin{itemize} 
\item Suppose $\ell \geq \max(r_1,...,r_k)$. Then, this inequality persists during the recursion, and we have
\begin{align*}
\mathsf{Tev}_{g,\ell,r_1,...,r_k}=& \mathsf{Tev}_{g-1,\ell,r_1,...,r_{k-1},r_k-1}+\mathsf{Tev}_{g-1,\ell+1,r_1,...,r_{k-1},r_k+1}
=2^{g-1}+2^{g-1}=2^g
\end{align*}
where in the second equality we have used the inductive hypothesis. We leave to the reader to check that the Equations \eqref{dimensional constraint}, \eqref{conditions on d}  and \eqref{condition on k} also persist during the recursion.
\item Suppose instead that $ \ell < \max(r_1,...,r_k).$ Then the non-strict inequality persists during the recursion and, for $r_k<d$, we can write 
\begin{align*}
\mathsf{Tev}_{g,\ell,r_1,...,r_k} 
& =\mathsf{Tev}_{g-1,\ell,r_1,...,r_{k-1},r_k-1} + \mathsf{Tev}_{g-1,\ell+1,r_1,...,r_{k-1},r_k+1}\\
&=\mathsf{Tev}_{g-1,0,r_1-\ell,...,r_{k-1}-\ell,r_k-1-\ell} + \mathsf{Tev}_{g-1,0,r_1-\ell-1,...,,r_{k-1}-\ell-1,r_k-\ell}\\
& = 2^{g-1}- \sum_{h=1}^{ k-1} \sum_{j=0}^{r_h-\ell-2}{ g-1 \choose j}- \sum_{j=0}^{r_k-\ell-3} {g-1 \choose j} \\
&+ 2^{g-1} - \sum_{h=0}^{k-1 } \sum_{j=0}^{r_h-\ell-3}{g-1 \choose j}- \sum_{j=0}^{r_k-\ell-2} { g-1 \choose j} \\
& = 2^g - \sum_{h=0}^{ k-1 } \sum_{j=0}^{r_h-\ell-2} \Bigg[ { g-1 \choose j-1} + {g-1 \choose j} \Bigg]- \sum_{j=0}^{r_k-\ell-2} \Bigg[ { g-1 \choose j-1} + {g-1 \choose j} \Bigg] \\
& = 2^g - \sum_{h=0}^{ k } \sum_{j=0}^{r_h-\ell-2} { g \choose j}.
\end{align*}
The case $r_k=d$ requires special attention, because in this case $d[g-1,\ell+1]< r_k+1$ and 
$$
\mathsf{Tev}_{g-1,\ell+1,r_1,...,r_{k-1},r_k+1}=0.
$$
A priori, we cannot apply the inductive hypothesis in the second and third equalities above. However, the proof goes through also in this case because when $r_k=d$, by condition \eqref{dimensional constraint}, it must be that $r_h\leq 1+\ell$ for all $h<k$ and thus also
$$
 2^{g-1} - \sum_{h=0}^{k-1 } \sum_{j=0}^{r_h-\ell-3}{g-1 \choose j}- \sum_{j=0}^{r_k-\ell-2} { g-1 \choose j}=2^{g-1}-\sum_{j=0}^{g-1}{ g-1 \choose j}=0.
$$
\end{itemize}
\end{proof}

\subsection{Results for general $\ell$}
Finally, the master result which also covers the $\ell \geq 0$ case takes the following form:

\begin{theorem}\label{master for case muij=1}
Suppose $g \geq 0$, $\ell \in \mathbb{Z}$ and $r_1,...,r_k \in \mathbb{Z}_{\geq 1}$. Assume that the inequalities \eqref{dimensional constraint}, \eqref{conditions on d} and \eqref{condition on k} hold with 
$$
\mu_h=(\underbrace{1,...,1}_{r_h})
$$
for $h=1,...,k$. Then,
\begin{align*}
\mathsf{Tev}_{g,l,r_1,...,r_k}= & \ 2^g -2\sum_{i=0}^{-\ell-2} {g \choose i} + \left(- \ell-k-2+n\right){g \choose {-\ell-1}} \\
&+\left(\ell -k + \sum_{h=1}^k \delta_{r_h,1}\right) {g \choose {-\ell}}-\sum_{h=1}^k \sum_{i=- \ell+1}^{r_h- \ell -2} {g \choose i}.
\end{align*}
Here $\delta_{r_h,1}$ is the Kronecker delta.
\end{theorem}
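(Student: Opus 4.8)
The plan is to prove the identity by induction on the genus $g$, using Proposition \ref{prop for g=0} for the base case $g=0$ and the recursion of Proposition \ref{prop recursion} for the inductive step; throughout I write $F(g,\ell,r_1,\ldots,r_k)$ for the expression on the right-hand side of the asserted formula. For the base case, one uses that $\binom{0}{i}=0$ for $i\neq 0$ together with the fact that the standing hypotheses \eqref{conditions on d}, \eqref{condition on sum of r_h}, \eqref{condition on n} force $d=1+\ell\ge 1$, hence $\ell\ge 0$, and $1\le r_h\le 1+\ell$ for every $h$. A short inspection then shows that every summation range in $F(0,\ell,r_1,\ldots,r_k)$ that could meet the index $i=0$ is empty, and that the coefficient $\ell-k+\sum_h\delta_{r_h,1}$ of $\binom{0}{-\ell}$ vanishes in the only case where $\binom{0}{-\ell}\neq 0$, namely $\ell=0$ (where all $r_h=1$). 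Thus $F(0,\ell,r_1,\ldots,r_k)=2^0=1=\mathsf{Tev}_{0,\ell,r_1,\ldots,r_k}$.

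For the inductive step, assume the formula at genus $g-1$ for every admissible tuple of arbitrary length, and apply Proposition \ref{prop recursion}. In the generic range $2\le r_k\le d-1$, both terms on the right again satisfy their own hypotheses (one checks that \eqref{conditions on d}, \eqref{condition on sum of r_h}, \eqref{condition on n} propagate, using that $n=g+3+2\ell$ is independent of the $r_h$ in this case), so one substitutes $F(g-1,\ell,\ldots,r_k-1)+F(g-1,\ell+1,\ldots,r_k+1)$. The leading terms combine to $2^{g-1}+2^{g-1}=2^g$, and the remaining identity reduces to the relation $\binom{g}{m}=\binom{g-1}{m-1}+\binom{g-1}{m}$ applied to the binomial expansions of both sides --- the shift $\ell\mapsto\ell+1$ in the second summand supplying exactly the index shift needed --- after which it is a finite comparison of the coefficient sequences, involving the substitutions $r_{\tot}\mapsto r_{\tot}\mp 1$, the delta-count, and the endpoints $r_h-\ell-2$, organized according to the ranges of the summation index. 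This is the same manipulation carried out, for $\ell\ge 0$, in the proof of the preceding proposition, and I would present it in parallel; since both $\mathsf{Tev}_{g,\ell,r_1,\ldots,r_k}$ and $F$ are symmetric in $r_1,\ldots,r_k$, there is no loss in the recursion only acting on the last entry.

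The main obstacle is the treatment of the boundary configurations. First, when $r_k=1$ the term $\mathsf{Tev}_{g-1,\ell,r_1,\ldots,r_{k-1},0}$ must be replaced according to the conventions of Proposition \ref{prop recursion} --- for $k>1$ by the $(k-1)$-parameter degree $\mathsf{Tev}_{g-1,\ell,r_1,\ldots,r_{k-1}}$, and for $k=1$ by the degree stipulated there --- and one must check that this replaced term, added to $F(g-1,\ell+1,\ldots,2)$, still returns $F(g,\ell,\ldots,1)$; the bookkeeping is delicate here, since the replacement alters $k$, $r_{\tot}$ and the Kronecker-delta count simultaneously. Second, in the extreme case $r_k=d[g,\ell]$ the second recursion term is $0$ by definition, because $d[g-1,\ell+1]=d[g,\ell]=r_k<r_k+1$ violates \eqref{conditions on d}; one then verifies directly, exactly as in the proof of the $\ell\ge 0$ proposition, that $F(g,\ell,\ldots,d)=F(g-1,\ell,\ldots,d-1)$, using that \eqref{condition on sum of r_h} forces $r_h<1+\ell$ for all $h<k$ in this situation, so that the relevant binomial coefficients on the right all drop out. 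Once these configurations are dealt with the induction closes; as consistency checks one recovers the preceding proposition when $\ell\ge 0$ and, for $k=1$, $r_1=1$, the closed formula of \cite{CPS}.
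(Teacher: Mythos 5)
Your overall route --- induction on $g$, with Proposition \ref{prop for g=0} as base case and Proposition \ref{prop recursion} plus Pascal's rule $\binom{g}{m}=\binom{g-1}{m-1}+\binom{g-1}{m}$ for the inductive step --- is exactly the paper's, and your base case is fine. But there are two weak points. First, your claim that in the range $2\le r_k\le d-1$ both summands of the recursion ``satisfy their own hypotheses'' is false unless you also arrange for $r_k$ to be a \emph{maximal} entry of the tuple: condition \eqref{conditions on d} at genus $g-1$ with the same $\ell$ requires $r_h\le d[g-1,\ell]=d-1$ for \emph{all} $h$, and an admissible tuple may have $r_h=d$ for some $h\neq k$ (for instance $\ell=1$, $k=2$, $r_1=d=g+2$, $r_2=2$ satisfies \eqref{conditions on d}, \eqref{condition on sum of r_h}, \eqref{condition on n}), in which case the first summand $\Tev_{g-1,\ell,r_1,\ldots,r_{k-1},r_k-1}$ is zero by definition and the inductive hypothesis does not apply to it. The fix is what the paper does implicitly: use the symmetry to order $r_1\le\cdots\le r_k$, so that the only degenerate configuration is $r_k=d$, which you do treat separately (and there $\ell\ge0$ and $r_h\le\ell+2$ for $h<k$ indeed follow from \eqref{condition on n} together with \eqref{condition on sum of r_h}, making the formal second summand vanish, exactly as in the paper).

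Second, the $r_k=1$ boundary. You read the convention of Proposition \ref{prop recursion} at genus $g-1$ (correctly --- the displayed convention in the paper has a typo, $g$ where $g-1$ is meant, as the comparison with the recursion of \cite{CPS} shows), but you only announce that the ``bookkeeping is delicate'' without performing it; since the entire content of the theorem is precisely this bookkeeping, the identity $F(g-1,\ell,r_1,\ldots,r_{k-1})+F(g-1,\ell+1,r_1,\ldots,r_{k-1},2)=F(g,\ell,r_1,\ldots,r_{k-1},1)$ must actually be written out. It does hold: the coefficient $-\ell-k-1+\sum_{h<k}r_h$ multiplies both $\binom{g-1}{-\ell-1}$ (first summand) and $\binom{g-1}{-\ell-2}$ (second summand), the coefficient $\ell-k+1+\sum_{h<k}\delta_{r_h,1}$ multiplies both $\binom{g-1}{-\ell}$ and $\binom{g-1}{-\ell-1}$, and Pascal's rule assembles everything into $F(g,\ell,r_1,\ldots,r_{k-1},1)$, so your plan goes through. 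Note, however, that the paper avoids this case entirely: after sorting it assumes $k>1$ and $r_{k-1}>1$ (hence $r_k\ge 2$), and disposes of the remaining case ($k=1$, or $r_1=\cdots=r_{k-1}=1$) by observing that the asserted formula there is literally the known closed formula of \cite{CPS}. Either adopting that shortcut or completing your $r_k=1$ computation, together with the maximality fix above, closes the argument.
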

\begin{proof}
We prove also this statement by induction on $g$.

{\tiny{$\blacksquare$}} If $g=0$, then the left hand side is equal to 1, and all terms of the right hand side except $2^g=1$ vanish when $\ell>0$. The only other possibility is $\ell=0$, in which case $d=\ell+1=1$ and $r_h=1$ for $h=1,2,\ldots,k$, so the only remaining term $$\left(\ell -k + \sum_{h=1}^k \delta_{r_h,1}\right)$$ also vanishes.


{\tiny{$\blacksquare$}} Suppose instead that $g \geq 1$. We may assume that $r_1 \leq ... \leq r_k$. Also, since in the case $r_1=...=r_{k-1}=1$ the formula reduces to \cite[Theorem 8]{CPS}, we may assume that $k>1$ and $r_{k-1}>1$. We then write 
$$
\mathsf{Tev}_{g,\ell,r_1,...,r_k} 
=\mathsf{Tev}_{g-1,\ell,r_1,...,r_{k-1},r_k-1} + \mathsf{Tev}_{g-1,\ell+1,r_1,...,r_{k-1},r_k+1}
$$
and, for $r_k \neq d$, we can apply to inductive hypothesis to rewrite the right hand side as
\begin{equation}\label{inductive step main formula}
\begin{aligned} 
&2^{g-1}-2 \sum_{j=0}^{-\ell-2} {g-1 \choose j} +\left(-\ell-k-2+\sum_{h=1}^{k-1} r_h+r_k-1\right){g-1 \choose -\ell-1} \\
&+\left(\ell -k +\sum_{h=1}^{k-1} \delta_{r_h,1} + \delta_{r_k-1,1} \right){g-1 \choose -\ell}- \sum_{h=1}^{k-1} \sum_{j=-\ell+1}^{r_h-\ell-2} {g-1 \choose j}  -\sum_{j=-\ell+1}^{r_k-\ell-3} {g-1 \choose j} \\&+2^{g-1}-2 \sum_{j=0}^{-\ell-3} {g-1 \choose j} +\left(-\ell-1-k-2+\sum_{h=1}^{k-1} r_h+r_k+1\right){g-1 \choose -\ell-2} \\
&+\left(\ell+1 -k +\sum_{h=1}^{k-1} \delta_{r_h,1} + \delta_{r_k+1,1} \right){g-1 \choose -\ell-1} - \sum_{h=1}^{k-1} \sum_{j=-\ell}^{r_h-\ell-3} {g-1 \choose j}  -\sum_{j=-\ell}^{r_k-\ell-2} {g-1 \choose j}.
\end{aligned}
\end{equation}
Now, transferring a term ${g-1 \choose -\ell-1}$ from the last row to the second row and combining the corresponding terms in rows 1 and 3 and in 2 two and 4, we get the desired formula.

When $r_k=d$, we have $d[g-1,\ell+1]<r_k+1$, so the inductive hypothesis may not be applied directly. In this case, we have
$$
\mathsf{Tev}_{g-1,\ell+1,r_1,...,r_{k-1},r_k+1}=0.
$$
Moreover, by condition 
 \eqref{dimensional constraint}, it must be that$\ell >-2$ and $r_h<\ell+2$ for all $h<k$. Also, since we are assuming $r_{k-1}>1$, it must be that $\ell \geq 0$, and thus the sum of the terms in the third and fourth rows of Equation \ref{inductive step main formula} is
$$
2^{g-1}- \sum_{j=0}^{g-1} {g-1 \choose j}=0.
$$
This concludes the proof.
\end{proof}

\begin{proof}[Proof of Theorem \ref{master formula}]
Combine Theorem \ref{master for case muij=1} and Proposition \ref{prop reduction}.
\end{proof}

\section{Schubert calculus formula via limit linear series}\label{sec:schubert}

In this section, we explain a second approach to the generalized Tevelev degrees and establish the alternate formula of Theorem \ref{main_thm_schubert}.

The strategy is as follows. We wish to count degree $d$ morphisms $\pi:C\to\bP^1$ satisfying certain incidence conditions. Such a morphism is determined by a line bundle $\cL$ on $C$ of degree $d$, along with two linearly independent sections $f_0,f_1\in H^0(C,\cL)$ spanning a rank 1 linear series $V=\langle f_0,f_1\rangle \subset H^0(C,\cL)$. The condition that $f(p)=q$ for $p\in C$ and $q\in\bP^1$ translates to a linear condition on the sections $f_0,f_1$ upon evaluation at $p$.

We then count $V,f_0,f_1$ on $C$ satisfying the desired properties by considering the limit of this data on a degeneration of $C$. Our approach is via the theory of limit linear series \cite{EH}, with which we assume some familiarity.

\subsection{Degeneration setup}

For notational convenience, we will assume that henceforth $\mu_h=(1)^{|\mu_h|}=(1)^{r_h}$ for $h=1,2,\ldots,k$; by Proposition \ref{prop reduction}, we may reduce to this case, but in fact, the proof that follows works equally well in the general case.

We fix, for the entirety of the proof of Theorem \ref{main_thm_schubert}, a 1-parameter family $\psi:\cC\to B,p_1,...,p_n:B \rightarrow \cC$ of $n$-pointed genus $g$ curves. $B$ may be taken to be a complex analytic disk or the spectrum of a discrete valuation ring with residue field $\bC$. We assume that the total space $\cC$ is smooth, the generic fiber $\cC_{\eta}$ is a general curve over the residue field of $\eta\in B$ (or a 1-parameter family of general curves over a punctured disc), and the special fiber $C_0$ has the following form.

The nodal curve $C_0$ contains a rational component (\textit{spine}) $C_{\spine}$, to which general elliptic tails $E_1,\ldots,E_g$ are attached at general points $s_1,\ldots,s_g\in C_{\spine}$, respectively, and rational tails $R_1,\ldots,R_k$ are attached at general points $t_1,\ldots,t_k\in C_{\spine}$, respectively. 

We assume that the $r_h$ pre-images of the target point $q_{h}\in\bP^1$ are situated at general points of the rational tail $R_h$ for $h=1,\ldots,k$ (see Figure \ref{the curve C0}). The family may be regarded as arising from the collision (in the limit) of these $r_h$ pre-images at the point $t_h$ for each $h$. Note that we allow $r_h=1$, so the pointed curve $C_0$ may not be stable.

\begin{figure}
\begin{center}
\begin{tikzpicture}[xscale=0.4,yscale=0.30] [xscale=0.36,yscale=0.36]
			
    \node at (1,16) {$C_{\mathrm{sp}}$};
	\draw [ultra thick, black] (1,0) to (1,15);
	
	
	\draw [ultra thick, black] (-0.5,1) to (15,1);
	\node at (16,1) {$R_1$};
	\node at (1,1) {$\bullet$};
	\node at (0.3,1.8) {$t_1$};
	
	\draw [ultra thick, black] (-0.5,5) to (15,5);
	\node at (16,5) {$R_k$};
	\node at (1,5) {$\bullet$};
	\node at (0.3,5.8) {$t_k$};
	
	\node at (16,3) {$\vdots$};
	
	
	\node at (3,1) {$\bullet$};
	\node at (4,0) {$p_{1}$};
	\node at (7.5,0) {$\dots$};
	\node at (10,1) {$\bullet$};
	\node at (11,0) {$p_{r_1}$};
	
	\node at (3,5) {$\bullet$};
	\node at (4,4) {$p_{n-r_k+1}$};
	\node at (7.5,4) {$\dots$};
	\node at (10,5) {$\bullet$};
	\node at (10,4) {$p_n$};

	
	\draw [ultra thick, black] (-0.5,7) to [out=0,in=180] (8,7) to [out=0,in=180] (13,8) to [out=0,in=180] (15,8);
	
	\draw [ultra thick, black] (-0.5,10) to [out=0,in=180] (8,10) to [out=0,in=180] (13,11) to [out=0,in=180] (15,11);
	
	\node at (16,8) {$E_1$};
	\node at (16,10) {$\vdots$};
	\node at (16,11) {$E_g$};
	
	\node at (1,7) {$\bullet$};
	\node at (1,10) {$\bullet$};
	\node at (0.3,7.8) {$s_1$};
	\node at (0.3,10.8) {$s_g$};
	
	
	
	

\end{tikzpicture}
\caption{The curve $C_0$.}\label{the curve C0}
\end{center}
\end{figure}
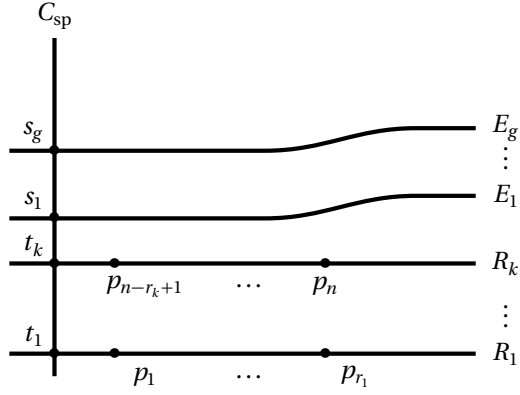

Let $G^{1}_d(\cC/B)\to B$ be the proper moduli space of (possibly crude) limit linear series of rank 1 and degree $d$ on the fibers of $\psi$. We recall some aspects of the construction, see e.g. \cite[Proposition 3.2.5]{lieblichosserman} for a modern treatment with further details.

Let $\Pic_{d,\spine}(\cC/B)$ be the relative Picard scheme of line bundles of degree $d>0$ on the fibers of $\psi$, where on the special fiber of $\cC$ we impose the finer requirement that the line bundle have degree $d$ on $C_{\spine}$ and 0 on all other components. Then, $\cC\times_B\Pic_{d,\spine}(\cC/B)$ is equipped with a Poincar\'{e} line bundle $\cL_{\spine}$, as well as twists $\cL_{i}$ for all other components $C_i\subset C_0$, which have the property that the restriction of $\cL_i$ to $C_0\times[\cL_i]$ has degree $d$ on $C_i$ and degree 0 on all other components.

From here, $G^{1}_d(\cC/B)$ may be constructed as a closed subscheme (with conditions defined by the vanishing of sections at the nodes of $C_0$) of a product of Grassmannian bundles $\Gr(2,\cW_i)$ over $\Pic_{d,\spine}(\cC/B)$. Here, $\cW_i$ is the vector bundle on $\Pic_{d,\spine}(\cC/B)$ whose fiber is given by the space $H^0(C,\cL)$ at a point over the generic fiber of $B$ and $H^0(C_i,\cL_i)$ over the special fiber. In particular, $G^{1}_d(\cC/B)$ carries tautological rank 2 bundles $\cV_i$ for each component $C_i\subset C_0$, which agree over the generic fiber of $B$, and are given by the aspect of the linear series on $C_i$ over the special fiber.

Now, let $\Coll^1_d(\cC/B)\to G^{1}_d(\cC/B)$ the $\bP^3$-bundle associated to $\cV_{\spine}^{\oplus 2}$. A point of $\Coll^1_d(\cC/B)$ parametrizes a point of $G^{1}_d(\cC/B)$ and a pair of sections $[f_0:f_1]$, taken up to simultaneous scaling, of the underlying linear series $V$ (on the generic fiber of $\psi$) or of the $C_{\spine}$-aspect $V_{\spine}$ (on the special fiber). Such a point may be regarded as a \textit{complete collineation} $\bC^2\to V$ or $\bC^2\to V_{\spine}$ in the sense of \cite{vainsencher}. We will refer to the divisor on $\Coll^1_d(\cC/B)$ parametrizing complete collineations where $f_0,f_1$ are linearly dependent as the \textbf{rank 1 locus}.

Fix, for the rest of the proof of Theorem \ref{main_thm_schubert}, general points $q_1,\ldots,q_{k}\in\bP^1$. 

\begin{definition}
We define a point $(V,[f_0:f_1])$ of the generic fiber of $\Coll^1_d(\cC/B)\to B$ to be a \textbf{Tevelev point} if $[f_0:f_1]$ defines a degree $d$ cover $\pi:C\to\bP^1$ satisfying the required incidence conditions of Definition \ref{def:hur_tev}(iv).
\end{definition}

By definition, the number of Tevelev points on the generic fiber is equal to $\Tev_{g,\ell,r_1,\ldots,r_k}$. By assumption, Tevelev points of $\Coll^1_d(\cC/B)$ define covers $f$ of degree $d$, so in particular are supported away from the rank 1 locus.

In order to enumerate Tevelev points, we compute their limits in the special fiber.

\begin{definition}
A point of $\Coll^1_d(\cC/B)_0$ is a \textbf{limit Tevelev point} if it lies in the closure of the locus of Tevelev points on the generic fiber of $\Coll^1_d(\cC/B)$
\end{definition}

The strategy of proof of Theorem \ref{main_thm_schubert} is as follows. In \S\ref{lim_tev_char}, we first give set-theoretic restrictions on limit Tevelev points, then show that all points of $\Coll^1_d(\cC/B)_0$ satisfying these conditions are in fact limit Tevelev. Moreover, the scheme-theoretic closure of the locus of Tevelev points is \'{e}tale over $B$, so it suffices to enumerative limit Tevelev points. This enumeration is then carried out in \S\ref{lim_tev_enum}.

\subsection{Characterization of limit Tevelev points}\label{lim_tev_char}

The following claims, Propositions \ref{prop:limit_tev_constraints}, \ref{prop:limit_tevelev_generic}, and \ref{prop:limit_tevelev_tails}, give constraints on a point $(V,[f_0:f_1])\in\Coll^1_d(\cC/B)_0$ to be limit Tevelev.

\begin{proposition}\label{prop:limit_tev_constraints}
Consider a limit Tevelev point $(V,[f_0:f_1])\in\Coll^1_d(\cC/B)_0$ and its aspect $V_{\spine}$ on $C_{\spine}$. Then, $(V_{\spine},[f_0:f_1])$ has the following properties.
\begin{enumerate}
    \item[(1)] $V_{\spine}$ is ramified at the node $s_j$ for $j=1,2,\ldots,g$, that is, at each $s_j$, $V_{\spine}$ has a non-zero section vanishing to order 2.
    \item[(2)] $[f_0:f_1]$ is incident at $q_{h}\in\bP^1$ to order $r_h$ when evaluated at $t_h$ for $h=1,2,\ldots,k$. That is,
    \begin{equation*}
        q^1_{h}\cdot f_0-q^0_{h}\cdot f_1\in H^0(\bP^1,\cO(d))
    \end{equation*}
    vanishes at $t_h$ to order $r_h$.
    \item[(2')] $V_{\spine}$ contains a \textit{non-zero} section vanishing at $t_h$ to order $r_h$ for $h=1,...,k$.
    \end{enumerate}
\end{proposition}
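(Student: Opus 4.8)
The plan is to prove the four constraints by four largely independent arguments, resting on two tools: (a) a limit Tevelev point $(V,[f_0:f_1])\in\Coll^1_d(\cC/B)_0$ carries a (possibly crude) limit linear series of rank $1$ and degree $d$ on $C_0$, whose aspects $V_{\spine},V_{E_j},V_{R_h}$ satisfy the Eisenbud--Harris compatibility inequalities $a_i(V_C,x)+a_{1-i}(V_{C'},x)\ge d$ ($i=0,1$) at every node $x=C\cap C'$ of $C_0$, where $a_0<a_1$ denotes the vanishing sequence of a pencil at a point (see \cite{EH}); and (b) closedness of the incidence conditions on $\Coll^1_d(\cC/B)$ together with properness, so that any closed condition holding at every generic Tevelev point holds on the closure of that locus, hence at every limit Tevelev point.

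For (1) I would work at the node $s_j=C_{\spine}\cap E_j$ and bound the $E_j$-side: decomposing $V_{E_j}$ into its base divisor and its base-point-free part $W_{E_j}$, one has $\deg W_{E_j}\ge 2$ since a base-point-free $g^1_1$ on $E_j$ would exhibit $E_j\cong\bP^1$; hence the base divisor has degree $\le d-2$ and $a_0(V_{E_j},s_j)\le d-2$. The compatibility inequality $a_1(V_{\spine},s_j)+a_0(V_{E_j},s_j)\ge d$ then forces $a_1(V_{\spine},s_j)\ge 2$, which is (1). For (2) I would note that evaluation at the section $p_i$ (which meets the smooth locus of every fiber and specializes into $C_{\spine}$) defines a morphism of line bundles on $\Coll^1_d(\cC/B)$; the vanishing of $q^1_if_0-q^0_if_1$ under it is a closed condition that holds at every generic Tevelev point by definition of the incidence condition, hence at every limit Tevelev point, and restricting to the special fiber gives (2). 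For (3') I would apply (b) to the $R_h$-aspect: $V_{R_h}$ contains a nonzero section vanishing at the $r_h$ distinct points of $R_h$ to which the relevant marked points specialize, so this section has at most $d-r_h$ zeros at the node $t_h$, i.e.\ $a_0(V_{R_h},t_h)\le d-r_h$; compatibility at $t_h$ then yields $a_1(V_{\spine},t_h)\ge r_h$, which is (3'). (Whenever $q^1_{n-r_{\tot}+h}f_0-q^0_{n-r_{\tot}+h}f_1\ne 0$, (3') also follows from (3), since $f_0,f_1\in V_{\spine}$.)

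The main obstacle, on which I would spend the most care, is (3), and here the plan is an intersection computation on the total space. Write $q:=q_{n-r_{\tot}+h}=[q^0:q^1]$. After a harmless modification of the family (a ramified base change followed by a resolution, whose routine bookkeeping I omit; it only interposes a chain of rational curves between $C_{\spine}$ and $R_h$, which the limit-linear-series formalism already absorbs), I may assume the limit Tevelev point extends to a section of $\Coll^1_d(\cC/B)\to B$ with Tevelev generic fiber, induced by sections $\widetilde f_0,\widetilde f_1\in H^0(\cC,\cL_{\spine})$ on the smooth total space $\cC$ whose restrictions $f_0,f_1$ to $C_{\spine}$ are not both zero. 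I would then set $\sigma:=q^1\widetilde f_0-q^0\widetilde f_1$, a nonzero section of $\cL_{\spine}$, and $Z:=\operatorname{div}(\sigma)\subset\cC$, and compare two evaluations of $Z\cdot R_h$: on the one hand $Z\cdot R_h=\deg(\cL_{\spine}|_{R_h})=0$; on the other, writing $Z=Z'+m_{\spine}C_{\spine}+m_{R_h}R_h+(\text{other vertical components})$ with $Z'$ the horizontal part, and using $R_h^2=-1$, $C_{\spine}\cdot R_h=1$, and that the remaining components of $C_0$ miss $R_h$, one gets $Z\cdot R_h=Z'\cdot R_h+m_{\spine}-m_{R_h}$. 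Since $\sigma$ vanishes on the generic fiber at the $r_h$ marked points over $q$, and these specialize to $r_h$ distinct points of $R_h$, the horizontal part $Z'$ contains the corresponding $r_h$ sections, so $Z'\cdot R_h\ge r_h$ and hence $m_{R_h}\ge r_h+m_{\spine}\ge r_h$. Finally, restricting $\sigma$ to $C_{\spine}$ in local coordinates at $t_h$, in which $R_h$ and $C_{\spine}$ are the two coordinate axes, shows that $q^1f_0-q^0f_1=\sigma|_{C_{\spine}}$ vanishes at $t_h$ to order at least $m_{R_h}\ge r_h$ (the assertion being vacuous if this section is zero, which happens exactly when $m_{\spine}>0$). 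The delicate points I anticipate in a complete write-up are this base-change/resolution reduction and the twist bookkeeping identifying the limit of $\widetilde f_0,\widetilde f_1$ with the spine aspect; the geometric content---that collapsing $r_h$ preimages of $q$ into $R_h$ forces vanishing of the spine section at $t_h$ to order at least $r_h$---is precisely what $Z\cdot R_h=0$ records.
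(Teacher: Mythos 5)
Your treatment of (1), (2) and (3') is correct and close in spirit to the paper's. For (1) the paper also uses the limit-linear-series compatibility at $s_j$, bounding the $E_j$-aspect by Riemann--Hurwitz rather than by your (equivalent) observation that a base-point-free pencil on an elliptic curve has degree at least $2$. (2) is, as in the paper, just closedness of the incidence condition. For (3') the paper deduces the statement from (3) when $q^1_{n-r_{\tot}+h}f_0-q^0_{n-r_{\tot}+h}f_1\neq 0$ and otherwise from the closed condition that $V$ contains a one-dimensional subspace vanishing at the colliding marked points; your route through the compatibility inequality $a_1(V_{\spine},t_h)+a_0(V_{R_h},t_h)\ge d$ applied to the $R_h$-aspect is a valid alternative (and works for crude limit series, as needed). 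For (3), where the paper simply asserts that the flat limit of the condition ``$q^1f_0-q^0f_1$ vanishes on the sum of the $r_h$ colliding points'' is vanishing on $r_h\cdot t_h$, you give a genuinely different and more explicit justification via intersection theory on the total space, comparing the multiplicity of $R_h$ in $\operatorname{div}(\sigma)$ with $\deg(\cL_{\spine}|_{R_h})=0$; this is a legitimate way to substantiate the paper's one-line claim.

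The one genuine gap sits exactly in the step you wave off as routine. To produce the sections $\widetilde f_0,\widetilde f_1$ you perform a ramified base change and a resolution, and you correctly note that this interposes a chain of rational curves between $C_{\spine}$ and $R_h$; but your displayed computation then uses $C_{\spine}\cdot R_h=1$, $R_h^2=-1$ and only the two multiplicities $m_{\spine},m_{R_h}$, which is valid only for the unmodified family. After the resolution $C_{\spine}\cdot R_h=0$, the chain components $X_1,\dots,X_N$ are $(-2)$-curves carrying multiplicities $m_1,\dots,m_N$ of their own, and it is $m_1$ (the multiplicity of the component adjacent to $C_{\spine}$), not $m_{R_h}$, that bounds the vanishing order of $\sigma|_{C_{\spine}}$ at $t_h$. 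The repair is standard but must be said: since $\deg(\cL_{\spine}|_{X_i})=0$ and the horizontal part meets each $X_i$ nonnegatively, intersecting $\operatorname{div}(\sigma)$ with $X_i$ gives $m_{i-1}+m_{i+1}-2m_i\le 0$, so the differences $m_{i+1}-m_i$ are nonincreasing along the chain, while intersecting with $R_h$ gives $m_{R_h}-m_N=Z'\cdot R_h\ge r_h$; together these yield $m_1-m_{\spine}\ge r_h$, hence $m_1\ge r_h$ when $m_{\spine}=0$ (and (3) is vacuous when $m_{\spine}>0$, as you note). Equivalently one can iteratively contract $R_h$ and the chain. With this paragraph supplied, and the (correct) identification of the limit of $[\widetilde f_0:\widetilde f_1]$ in $\bP(\cV_{\spine}^{\oplus 2})$ with the pair of restrictions to $C_{\spine}$ (fine, since $\cV_{\spine}$ is a saturated subbundle and the twist $\cL_{\spine}$ has degree $0$ on all tails and chain components), your proof of (3) is complete.
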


\begin{remark}
Note that if $f_0,f_1$ vanish at $t_h$ to order $r_h$, then (2) is satisfied for any $q_{h}\in\bP^1$, but (2') may not be.
\end{remark}

\begin{proof}
By Riemann-Hurwitz formula, the vanishing sequence of the aspect $V_{E_i}$ is at most $(d-2,d)$ at $s_i$, so because $V$ is a (possibly crude) limit linear series, $V_{\spine}$ must have vanishing sequence at least $(0,2)$, giving (1).

Regarding $t_h$ as the simultaneous limit of the $r_h$ points
\begin{equation*}
p_{n-r_{h}-r_{h+1}-\cdots-r_{k}+1},\ldots,p_{n-r_{h+1}-\cdots-r_{k}}
\end{equation*}
on the generic fiber, the condition that
\begin{equation*}
        q^1_{h}\cdot f_0-q^0_{h}\cdot f_1\in H^0(\bP^1,\cO(d))
\end{equation*}
vanishes on the divisor given by the sum of these $r_h$ points has flat limit in $\Coll^1(\cC/B)$ given by the condition that it vanishes on the divisor $r_h\cdot t_h$. This yields (2).

Condition (2') is immediate from (2), unless the section
\begin{equation*}
        q^1_{h}\cdot f_0-q^0_{h}\cdot f_1\in H^0(\bP^1,\cO(d))
\end{equation*}
is identically zero. However, the condition on the generic fiber that $V$ contains a 1-dimensional subspace of sections vanishing at
\begin{equation*}
p_{n-r_{h}-r_{h+1}-\cdots-r_{k}+1},\ldots,p_{n-r_{h+1}-\cdots-r_{k}}
\end{equation*}
is closed, and limits to the desired one.
\end{proof}

\begin{proposition}\label{prop:limit_tevelev_generic}
Suppose $(V,[f_0:f_1])\in\Coll^1_d(\cC/B)_0$ satisfies properties (1), (2), (2') of Proposition \ref{prop:limit_tev_constraints}. Then, $(V_{\spine},[f_0:f_1])$ satisfies the following genericity properties.
\begin{enumerate}
    \item[(3)] $f_0,f_1$ share no common zeroes on $C_{\spine}$. In particular, $f_0,f_1$ are linearly independent, and $V_{\spine}=\langle f_0,f_1\rangle$ has no base-points.
    \item[(4)] Properties (1) and (2) of Proposition \ref{prop:limit_tev_constraints} hold ``exactly,'' that is, $V_{\spine}$ is simply ramified at each $s_1,\ldots,s_g$, and $[f_0:f_1]$ has ramification index exactly $r_h$ at $t_{h}$ for $h=1,2,\ldots,k$.
\end{enumerate}
\end{proposition}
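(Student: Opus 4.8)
The plan is to show that (4) and (5) hold automatically for any point of $\Coll^1_d(\cC/B)_0$ satisfying (1), (2), (3), (3') \emph{because} the source points $s_1,\ldots,s_g$, $t_1,\ldots,t_k$, $p_1,\ldots,p_{n-r_{\tot}}$ on $C_{\spine}$ and the target points $q_1,\ldots,q_{n-r_{\tot}+k}$ have been chosen generically. To organize this, I would introduce the configuration space $\mathcal{S}$ of all such tuples of source and target points, together with the incidence variety $\mathcal{Z}\to\mathcal{S}$ whose fibre over a configuration is the set of pairs $(V_{\spine},[f_0:f_1])$ in $\Gr(2,d+1)\times\bP(V_{\spine}^{\oplus 2})$ satisfying (1), (2), (3), (3'). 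A dimension count on the Grassmannian — condition (1) imposes the Schubert condition $\sigma_1$ at each $s_j$, (3') imposes $\sigma_{r_h-1}$ at each $t_h$, and on the $\bP^3=\bP(V_{\spine}^{\oplus 2})$ the conditions (2) and (3) are each of codimension $1$ — gives a total of $g+n$ conditions, so that the expected fibre dimension of $\mathcal{Z}\to\mathcal{S}$ is $2d+1-g-n=0$, using $d=g+1+\ell$, $n=g+3+2\ell$ and $|\mu_h|=r_h$. It therefore suffices to show that each of the closed loci in $\mathcal{Z}$ on which (4) or (5) fails does not dominate $\mathcal{S}$; then the general configuration fixed in \S\ref{sec:schubert} avoids all of them.

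To rule out a failure of (4) I would distinguish two cases. If $f_0,f_1$ are linearly dependent, with common ratio $[q^0:q^1]\in\bP^1$, then for every $i\le n-r_{\tot}$ at which $f_0$ does not already vanish, condition (2) forces $q_i=[q^0:q^1]$; since $f_0$ has only $d$ zeroes and the $q_i$ are general and pairwise distinct, this pins down all but boundedly many of the $q_i$, and a parameter count (also using (1) and (3)) shows the corresponding locus does not dominate $\mathcal{S}$. If instead $f_0,f_1$ are linearly independent but share a common zero $x$, then dividing out the linear form vanishing at $x$ replaces the data by a degree $d-1$ linear series satisfying the analogues of (1), (2), (3), (3') at the same points: the expected dimension then drops by $2$ while only the single new parameter $x$ is gained, so this locus has strictly negative expected dimension — and when some $r_h=d$ it is literally empty, since a section of degree $d-1$ cannot vanish to order $r_h$. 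Either way the locus does not dominate $\mathcal{S}$, which gives (4); linear independence of $f_0,f_1$ and base-point-freeness of $V_{\spine}=\langle f_0,f_1\rangle$ are then immediate.

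Granting (4), the pair $[f_0:f_1]$ defines an honest degree $d$ morphism $\pi:C_{\spine}\to\bP^1$, and each of the ways (5) can fail — namely $V_{\spine}$ ramified to order $\ge 3$ at some $s_j$, or $\pi$ ramified at some $p_i$ with $i\le n-r_{\tot}$, or $\pi$ ramified to order $>r_h$ at some $t_h$ — imposes exactly one further ramification condition on the data. Since the locus of $(V_{\spine},[f_0:f_1])$ satisfying (1), (2), (3), (3') already has expected dimension equal to $\dim\mathcal{S}$ over $\mathcal{S}$, each of these extra conditions cuts it down to a proper closed subset of $\mathcal{Z}$, which hence does not dominate $\mathcal{S}$; the general configuration therefore satisfies (5) as well.

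The main obstacle is making the dimension counts of the last two steps rigorous: one must know that $\mathcal{Z}$, or at least the component of it dominating $\mathcal{S}$, genuinely has the expected dimension, i.e.\ that the conditions (1), (2), (3), (3') meet in general position over a general configuration, with no excess component of $\mathcal{Z}$ dominating $\mathcal{S}$. This is precisely where the genericity of \emph{both} the source points $s_j,t_h,p_i$ \emph{and} the target points $q_\bullet$ is genuinely used. The cleanest route is to verify transversality at a single point of $\mathcal{Z}$ by a tangent-space computation, which reduces — in the same spirit as the proof of Lemma \ref{genus0_inj} — to the statement that a rational function on $\bP^1$ of sufficiently small degree taking a fixed value at too many prescribed points must be constant.
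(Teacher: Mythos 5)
Your overall framing (an incidence variety $\mathcal{Z}\to\mathcal{S}$ over the configuration space, with the claim that every locus where (4) or (5) fails does not dominate $\mathcal{S}$) is reasonable and, for (5) and the common-zero part of (4), is in the same spirit as the paper's argument, which also trades on genericity of the configuration. But the proposal has a genuine gap precisely where the content lies. First, the linearly dependent case is not actually argued. The observation that condition (2) ``pins down'' the $q_i$ gives nothing whenever $n-r_{\tot}\le d+1$ (equivalently $r_{\tot}\ge \ell+1$, which covers most cases of interest): a single nonzero $f$ of degree $d$ can absorb all the incidence conditions by vanishing at the $p_i$ and to order $r_h$ at the $t_h$, with no constraint on the $q_i$ at all. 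Everything is then thrown onto the unperformed ``parameter count (also using (1) and (3))'', and that count is exactly the hard point: one must rule out a pencil $V_{\spine}$ that is ramified at the $g$ general points $s_j$ \emph{and} contains a section vanishing on a divisor of degree roughly $n$, and these Schubert-type conditions need not a priori intersect properly. The paper does not do this by a dimension count; it coalesces the support of the vanishing divisor to a single point and invokes the Pl\"ucker/Riemann--Hurwitz bound, getting total ramification at least $g+n-2=2d-1>2d-2$, a contradiction that requires no transversality input.

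Second, and more structurally, all of your exclusions are \emph{expected}-dimension counts, and you yourself identify the upgrade to actual dimension bounds (no excess component of a bad locus dominating $\mathcal{S}$) as ``the main obstacle.'' The proposed remedy --- a tangent-space computation reducing to the statement that a low-degree rational function taking a fixed value at too many points is constant --- only addresses the incidence conditions (2)--(3); it does not control the ramification conditions at $s_1,\ldots,s_g$ nor the order-$r_h$ conditions at the $t_h$, which is where excess intersection actually threatens. The paper's device for exactly this step is to re-fiber over a Hurwitz space: for a cover with a prescribed (bad) ramification profile the dimension of $\mathcal{H}$ is known on the nose, and dominance of $\tau:\mathcal{H}\to M_{0,g+n-r_{\tot}+k}\times M_{0,N}$, forced by generality of the configuration, yields an honest inequality rather than an expected one. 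Relatedly, your bookkeeping for a common zero at $x=t_h$ (``the analogues of (1),(2),(3),(3$'$) at the same points'') is off: there (3$'$) weakens to $\sigma_{r_h-2}$ and no parameter $x$ is gained, and the locus is excluded only because the full order-$r_h$ incidence (3) survives as an order-$(r_h-1)$ incidence at the general point $q_{n-r_{\tot}+h}$. Indeed, with the weaker condition (3$''$) used later in \S\ref{lim_tev_enum}, such base-point loci genuinely occur and are responsible for the inclusion--exclusion in the proof of Theorem \ref{main_thm_schubert} (cf.\ Proposition \ref{prop:new_intersection_zero_dim}), so this is a place where the quick count, as stated, would mislead.
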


\begin{proof}
We divide the proof into two steps. 
\begin{enumerate}[label=\underline{Step \arabic*}]
\item We first show that $f_0,f_1$ are linearly independent. Suppose instead that $[f_0:f_1]=[\lambda_0f:\lambda_1f]$. Note that we require $f\in V_{\spine}$ and that $V_{\spine}$ is ramified at $s_1,\ldots,s_g$.

Define the integer $\alpha\ge0$ as follows. We declare $\alpha=0$ if $q_h\neq[\lambda_0:\lambda_1]$ for all $h$, $\alpha=r_{h}$ if $q_h=[\lambda_0:\lambda_1]$.

Then, for every $q_h$ for which $q_h\neq[\lambda_0:\lambda_1]$, condition (2) implies that $f$ must vanish at $t_{h}$ to order $r_{h}$. In total, $f$ is constrained to vanish on a fixed divisor $D_{\alpha}\subset C_{sp}$ of degree $n-\alpha$.

On the other hand, suppose that $q_h=[\lambda_0:\lambda_1]$. Then, we have the additional condition from (2') that $V_{\spine}$ is ramified to order $r_{h}$ at $t_{h}$.

We now claim that linear series $V_{\spine}$ with all of the above properties cannot exist. To see this, we consider a further degeneration of our marked $\bP^1$ in which all of the points in the support of $D_{\alpha}$ coalesce to a single point $p_{\alpha}$, and consider the limit $\widetilde{V}_{\spine}$ of $V_{\spine}$ on this new marked rational curve. Then, we still have the conditions that $\widetilde{V}_{\spine}$ is ramified at $s_1,\ldots,s_g$, and that it is ramified to order $r_{h}$ at $t_{h}$. The condition that we have a non-zero section $f$ vanishing on $D_{\alpha}$ now becomes the condition that $\widetilde{V}_{\spine}$ is ramified to order $n-\alpha-1$ at $p_{\alpha}$.

The total amount of ramification on $V_{\spine}$ is therefore at least
\begin{equation*}
    g+(n-\alpha-1)+(\alpha-1)=g+n-2=2d-1,
\end{equation*}
which is more than the amount allowed by the Riemann-Hurwitz formula, a contradiction.
\item Now we prove that conditions (3) and (4) hold.
 \\Let $\mathcal{H}$ be the Hurwitz stack parametrizing maps $ \mathbb{P}^1 \rightarrow \mathbb{P}^1 $ of the same degree and with exactly the same ramification profiles as $[f_0:f_1]:\mathbb{P}^1 \rightarrow \mathbb{P}^1$. In the domain curve, we mark all the points $p_i,s_j,t_h$. On the target curve,  we mark $q_h$ for $h=1,...,k$ unless both $f_0$ and $f_1$ vanish at $t_h$ to order $r_h$.
 
 We have a forgetful map $$\tau:\mathcal{H} \to M_{0,g+k}\times M_{0,N}$$ remembering the marked source and target. Because all of our points have been chosen to be general, $\tau$ is dominant and thus $\mathrm{dim}(\mathcal{H}) \geq \mathrm{dim}(M_{0,g+k}\times M_{0,N})$. A parameter count shows that this is only possible when $[f_0:f_1]$ has degree $d$ (i.e. $f_0,f_1$ share no common factor), $[f_0:f_1]$ has ramification index exactly $r_h$ at $t_{h}$ for $h=1,2,\ldots,k$, and has ramification index exactly $2$ at $s_i$ for $i=1,...,g$. The proof is complete.
\end{enumerate}
\end{proof}

\begin{remark}
Given (3), condition (2') is superfluous, as the linear independence of $f_0,f_1$ implies that the section
\begin{equation*}
        q^1_{h}\cdot f_0-q^0_{h}\cdot f_1
\end{equation*}
from (2) is not identically zero.
\end{remark}

\begin{proposition}\label{prop:limit_tevelev_tails}
Consider a limit Tevelev point $(V,[f_0:f_1])\in\Coll^1_d(\cC/B)_0$. Then, the aspects of $V$ on the tails of $C_{\spine}$ have the following properties.
\begin{enumerate}
    \item[(5)] The aspect $V_{E_j}$ is the unique linear series on $E_j$ with vanishing sequence $(d-2,d)$ at $s_j$ for $i=1,2,\ldots,g$.
    \item[(6)] The aspect $V_{R_h}$ is the unique linear series on $R_h\cong\bP^1$ spanned by a section $f^h_0$ vanishing to order $d$ at $t_h$ and a section $f^h_1$ vanishing to order $d-r_h$ at $t_h$ and to order 1 at each of
\begin{equation*}
p_{n-r_{h}-r_{h+1}-\cdots-r_{k}+1},\ldots,p_{n-r_{h+1}-\cdots-r_{k}}
\end{equation*}
\end{enumerate}
In particular, $V$ is a fine limit linear series, and the data of a limit Tevelev point is determined by $(V_{\spine},[f_0:f_1])$.
\end{proposition}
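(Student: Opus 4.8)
Let $(V, [f_0:f_1]) \in \Coll^1_d(\cC/B)_0$ be a limit Tevelev point; by Propositions \ref{prop:limit_tev_constraints} and \ref{prop:limit_tevelev_generic} it satisfies properties (1)--(5). The plan is to read off each tail aspect from the node-compatibility inequalities of Eisenbud--Harris limit linear series together with the precise ramification data for $V_{\spine}$ recorded in those propositions, and then to use the incidence conditions at the marked points that have migrated onto the rational tails to remove the last remaining freedom in $V_{R_h}$.

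For the elliptic tails, at the node $s_j$ the limit-linear-series inequalities relate the vanishing sequences by $a_i^{V_{\spine}}(s_j) + a_{1-i}^{V_{E_j}}(s_j) \ge d$ for $i = 0, 1$. Since $V_{\spine}$ is base-point-free and simply ramified at $s_j$ (Proposition \ref{prop:limit_tevelev_generic}(4)--(5)), its vanishing sequence there is $(0, 2)$ exactly, forcing $a_1^{V_{E_j}}(s_j) \ge d$ and $a_0^{V_{E_j}}(s_j) \ge d - 2$; as $\deg L_{E_j} = d$, both are equalities. A section of $V_{E_j}$ with a zero of order $d$ at $s_j$ forces $L_{E_j} \cong \cO_{E_j}(d s_j)$, and the bound $a_0^{V_{E_j}}(s_j) \ge d - 2$ then puts $V_{E_j}$ inside the image of $H^0(E_j, \cO_{E_j}(2 s_j))$ under multiplication by a fixed section cutting out $(d-2) s_j$; since $h^0(E_j, \cO_{E_j}(2 s_j)) = 2$ by Riemann--Roch, this inclusion is an equality. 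Hence $V_{E_j}$ is the unique $g^1_d$ with vanishing sequence $(d-2, d)$ at $s_j$, which is (6); in particular the series is refined at $s_j$.

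For the rational tails, the same comparison at $t_h$, using that $V_{\spine}$ has vanishing sequence $(0, r_h)$ there (Proposition \ref{prop:limit_tevelev_generic}(5)), gives $a_1^{V_{R_h}}(t_h) = d$ and $a_0^{V_{R_h}}(t_h) \ge d - r_h$. Fixing an affine coordinate $t$ on $R_h \cong \bP^1$ with $t_h = \{t = 0\}$, this means $V_{R_h}$ contains $f_0^h = t^d$ and that every section of $V_{R_h}$ is divisible by $t^{d-r_h}$. To pin down the remaining generator, consider the section $\sigma := q^1_{n-r_{\tot}+h} f_0 - q^0_{n-r_{\tot}+h} f_1$ of $V$: the incidence condition $\pi(p_m) = q_{n-r_{\tot}+h}$ on the generic fiber says $\sigma$ vanishes at the $r_h$ marked points $p_m$ over $q_{n-r_{\tot}+h}$, and these degenerate onto $R_h$. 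Taking the flat limit of $\sigma$ over the family, viewed as a section of the component-twist of the relative degree $d$ line bundle that is nonzero of degree $d$ on $R_h$, its $R_h$-aspect is a nonzero element of $V_{R_h}$ vanishing at each $p_m$; it is therefore not proportional to $f_0^h$, so it vanishes to order exactly $d - r_h$ at $t_h$ (and $a_0^{V_{R_h}}(t_h) = d - r_h$ exactly, as otherwise this section would have more than $d$ zeros). Having degree $d$ and a simple zero at each $p_m$, it must equal $t^{d - r_h}\prod_m(t - p_m)$ up to scalar, so $V_{R_h} = \langle t^d,\, t^{d - r_h}\prod_m(t - p_m)\rangle$. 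This proves (7), and shows the series is refined at $t_h$ too. Having now computed every aspect and checked complementarity of the vanishing sequences at each node, $V$ is fine; and as the tail aspects depend only on the fixed points $s_j, t_h, p_m$, the limit linear series together with $[f_0:f_1]$ is recovered from $(V_{\spine}, [f_0:f_1])$.

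I expect the main obstacle to be the step identifying the second generator of $V_{R_h}$: one must make precise the sense in which an incidence condition imposed at a marked point lying on a rational tail, rather than on the spine where the collineation data lives, constrains the $R_h$-aspect. This is exactly the subtlety flagged in the introduction, and I would handle it by tracking the flat limit of the section $q^1 f_0 - q^0 f_1$ across the degeneration and restricting it to $R_h$ after the component twist built into the limit-linear-series construction; the underlying input is standard, but the bookkeeping needs care.
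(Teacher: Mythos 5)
Your proposal is correct and follows essentially the same route as the paper: the node-compatibility inequalities at $s_j$ and $t_h$, together with the exact ramification data from Propositions \ref{prop:limit_tev_constraints} and \ref{prop:limit_tevelev_generic}, force the vanishing sequences $(d-2,d)$ and $(d-r_h,d)$, and the limit of the section $q^1_{n-r_{\tot}+h}f_0-q^0_{n-r_{\tot}+h}f_1$ (equivalently, of the one-dimensional subspace of sections vanishing at the $r_h$ marked points, which is how the paper phrases it) pins down the second generator of $V_{R_h}$ and shows $a_0^{V_{R_h}}(t_h)=d-r_h$ exactly. The flat-limit/twisting subtlety you flag is handled in the paper by the same closedness argument already used for condition (3') in Proposition \ref{prop:limit_tev_constraints}, so there is no gap.
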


\begin{proof}
By properties (1) and (4), the vanishing sequence of $V_{E_j}$ at $s_j$ is at least $(d-2,d)$, which is the largest possible, and uniquely determines $V_{E_j}$ (as the image of the complete linear system $|\cO_{E_j}(2s_j)|$ in $|\cO_{E_j}(ds_j)|$. This yields (5).

By properties (2) and (4), the vanishing sequence of $V_{R_h}$ at $t_h$ is at least $(d-r_h,d)$, so $V_{R_h}$ contains a section vanishing to order $d$ at $t_h$. On the other hand, $V_{R_h}$ contains a non-zero section vanishing at the $r_h$ marked points
\begin{equation*}
p_{n-r_{h}-r_{h+1}-\cdots-r_{k}+1},\ldots,p_{n-r_{h+1}-\cdots-r_{k}}
\end{equation*}
because the same is true on the generic fiber. This section can then vanish to order at most $d-r_h$ at $t_h$, so we must have equality, yielding (6).

The final claims are immediate.
\end{proof}

We now show that the necessary conditions $(1)-(6)$ for a point of $\Coll^1_d(\C/B)_0$ to be a limit Tevelev point are sufficient.

\begin{proposition}\label{prop:lls_smoothing}
Suppose that $(V,[f_0:f_1])\in \Coll^1_d(\cC/B)_0$ satisfies conditions $(1)-(6)$ above. Then, $(V,[f_0:f_1])$ is a limit Tevelev point.

Moreover, $(V,[f_0:f_1])$ smooths transversally to the generic fiber, that is, the scheme-theoretic closure of the locus of Tevelev points is \'{e}tale over $B$. 
\end{proposition}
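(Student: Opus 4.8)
The plan is to establish the two assertions of Proposition \ref{prop:lls_smoothing} essentially simultaneously by a dimension/transversality count in the spirit of the regeneration theorem for limit linear series of Eisenbud--Harris \cite{EH}, adapted to keep track of the extra collineation data $[f_0:f_1]$ and the incidence conditions at the $q_i$. First I would observe that conditions $(1)$--$(7)$ pin down the limit linear series $V$ up to finitely many choices: by $(6)$ and $(7)$ the tail aspects $V_{E_j}$, $V_{R_h}$ are rigid, and by Propositions \ref{prop:limit_tev_constraints} and \ref{prop:limit_tevelev_generic} the spine aspect $V_{\spine}$ together with $[f_0:f_1]$ is a base-point-free pencil on $C_{\spine}\cong\bP^1$, simply ramified at $s_1,\ldots,s_g$, with ramification exactly $r_h$ at $t_h$, unramified at $p_1,\ldots,p_{n-r_{\tot}}$, and satisfying the incidence conditions $(2)$, $(3)$ at the general points $q_i$. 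Thus the locus of points of $\Coll^1_d(\cC/B)_0$ satisfying $(1)$--$(7)$ is a reduced, finite set of points, which will be enumerated in \S\ref{lim_tev_enum}; here I only need that it is finite and that each such point is \emph{smoothable}.

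Next I would carry out the smoothing. The standard approach is to show that at such a point the total space $\Coll^1_d(\cC/B)$ (or rather the subscheme cut out by the incidence and ramification conditions) is, locally, smooth of dimension $1$ over $B$. Concretely, I would intersect inside $\Coll^1_d(\cC/B)$ the closed subschemes imposing: ramification $\ge2$ at each $s_j$ (the limit-linear-series compatibility conditions at the $g$ elliptic nodes), the $n-r_{\tot}$ incidence conditions at $p_1,\ldots,p_{n-r_{\tot}}$, the $r_h$-fold vanishing conditions at $t_1,\ldots,t_k$, and the conditions forcing the prescribed vanishing sequences on the tail aspects. On the generic fiber these cut out exactly the Tevelev points. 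A parameter count — using that $\dim \Coll^1_d(\cC/B) = \dim B + \dim G^1_d + 3$ and that each of the above conditions drops the dimension by the expected amount, by genericity of $s_j,t_h,p_i,q_i$ — shows that the total intersection has dimension $\le 1$, with equality exactly because it dominates $B$ (the generic fiber is nonempty, as $\Tev_{g,\ell,r_1,\ldots,r_k}>0$ in the relevant range, or one argues directly that a smoothing exists). Then I would invoke the fact that a one-dimensional scheme, proper and quasi-finite over the smooth base $B$ of dimension $1$, which is generically étale, is étale at a point $x$ of the special fiber provided the fiber $\Coll^1_d(\cC/B)_0$ is reduced of dimension $0$ at $x$ and the local ring is Cohen--Macaulay of the right dimension; the reducedness and the dimension count just performed give this.

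The main obstacle I expect is verifying that the various conditions really do impose \emph{independent} constraints — i.e., that the scheme-theoretic intersection has the expected dimension and is reduced at the points in question — rather than meeting with excess dimension or nonreduced structure. This is exactly where the genericity of the configuration $(s_j,t_h,p_i)$ on $\cC$ and of $(q_i)$ on $\bP^1$ must be used, and it is cleanest to argue it via the surjectivity (dominance) of a forgetful map from an auxiliary Hurwitz stack to a product of $\oM_{0,m}$'s, exactly as in Step 2 of the proof of Proposition \ref{prop:limit_tevelev_generic}: a dominant map between smooth stacks of the same dimension forces the generic fiber of $\tau$ to be finite and reduced, and one transports this back to the fibre product defining $\Coll^1_d(\cC/B)$. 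Transversality of the smoothing then follows because the tangent space to the special fiber at such a point is zero-dimensional, so no first-order deformation is obstructed along $B$.

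Once transversality is known, the étaleness over $B$ of the closure of the Tevelev locus is immediate, and the count $\Tev_{g,\ell,r_1,\ldots,r_k}$ equals the number of limit Tevelev points, which is what \S\ref{lim_tev_enum} then computes by Schubert calculus on the Grassmannian bundle of spine aspects. I would flag that the only genuinely new point beyond \cite{EH} and \cite{FarkasLian} is the simultaneous handling of incidence conditions at $p_i$ lying on the spine and ramification conditions at the $t_h$ where tails are attached; but since on the spine these are all just vanishing conditions on a pencil on $\bP^1$, the regeneration argument goes through verbatim after checking the dimension bookkeeping.
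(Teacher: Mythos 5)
Your outline follows the paper's route---work in $\Coll^1_d(\cC/B)$ over the Eisenbud--Harris space $G^1_d(\cC/B)$, impose the incidence conditions, and get transversality/\'etaleness from generic smoothness of a Hurwitz-space forgetful map (that half of your argument is essentially the paper's). But the smoothing half has a genuine gap as written. What must be proved is that \emph{each} point of $\Coll^1_d(\cC/B)_0$ satisfying (1)--(7) lies on a component of the locus cut out by the incidence conditions that dominates $B$. Your justification---dimension $\le 1$, ``with equality exactly because it dominates $B$ (the generic fiber is nonempty\ldots, or one argues directly that a smoothing exists)''---is circular at exactly this point: nonemptiness of the generic fiber shows only that \emph{some} component dominates $B$, while the particular point in question could a priori be an isolated point of the intersection contained in the special fiber; in that case it does not smooth, and your \'etaleness criterion cannot even be invoked, since it presupposes that the local ring at the point has dimension $1$. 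The missing ingredient is a \emph{local lower bound}: by \cite[Theorem 4.5]{EH} the special fiber of $G^1_d(\cC/B)$ is pure of the expected dimension $2d-2-g$, hence $G^1_d(\cC/B)$, and so $\Coll^1_d(\cC/B)$, is flat over $B$ of relative dimension $n=2d+1-g$; the Tevelev locus is cut out by exactly $n$ relatively defined linear conditions, so every component through the given point has dimension $\ge 1$, while the dimension count of Proposition \ref{prop:limit_tevelev_generic} shows the special fiber is $0$-dimensional there. Hence every component through the point dominates $B$, and since condition (4) (lying off the rank-$1$ locus, no common zeros) is open, nearby generic-fiber points are honest Tevelev points. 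This is precisely the paper's ``semicontinuity'' step, and without it the first assertion of the proposition is not established.

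A related bookkeeping problem: you propose to impose, besides the $n$ incidence conditions, the ramification conditions at the $s_j$ and the prescribed vanishing sequences of the tail aspects as closed conditions inside $\Coll^1_d(\cC/B)$. These conditions are not defined relatively to $B$ (the nodes $s_j$ and the tails exist only in the special fiber), and if they were counted as additional codimension the expected relative dimension would become negative, ruining the count above. The correct setup, as in the paper, imposes only the $n$ incidence conditions (whose flat limits are (2) and (3) of Proposition \ref{prop:limit_tev_constraints}); conditions (1), (6), (7) then hold on the special fiber as consequences of the limit-linear-series compatibility at the nodes together with Propositions \ref{prop:limit_tev_constraints}--\ref{prop:limit_tevelev_tails}, and (4), (5) follow from genericity via the Hurwitz-space count. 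With these corrections your argument closes up and coincides with the paper's proof.
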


\begin{proof}

Let $G^1_d(\cC/B)$ be the moduli space of \textit{fine} limit linear series on the fibers of $\cC$, as constructed in the proof of \cite[Theorem 3.3]{EH}. 
By \cite[Theorem 4.5]{EH}, the special fiber of $G^1_d(\cC/B)$ is pure of the expected relative dimension $2d-2-g$, so $G^1_d(\cC/B)\to B$ is flat of the same dimension. Indeed, $G^1_d(\cC/B)$ may be regarded as a locally closed subscheme inside a product of Grassmannian bundles over $B$, which has the expected dimension over the special fiber, and thus must be flat of the expected dimension globally by semicontinuity.

As before, let $\cV_{\spine}$ be the rank 2 vector bundle on $G^1_d(\cC/B)$ with fiber equal to the linear series $V$ on the general fiber of $\psi:\cC\to B$, and fiber $V_{\spine}$ on special fiber. Let $\Coll^1_d(\cC/B)$ be the $\bP^3$-bundle over $G^1_d(\cC/B)$ associated to the rank 4 bundle $\cV_{\spine}^{\oplus 2}$.

We now cut out the locus of Tevelev points $\Coll^1_d(\cC/B)_{\Tev}\subset \Coll^1_d(\cC/B)$ via the $n$ linear conditions that $[f_0:f_1]$ is incident to $q_h$ at $p_i$ when $n-\sum_{h=j}^{k}r_h+1\le i\le n-\sum_{h=j+1}^{k}r_h$ and $1\le j\le k$. These conditions have flat limits given by condition (2) of Proposition \ref{prop:limit_tev_constraints}.

We see now that the points on the special fiber of $\Coll^1_d(\cC/B)_{\Tev}$ satisfying properties (1)-(6) therefore lie in the closed subscheme cut out by these linear conditions. Moreover, in a neighborhood of these points, the dimension of this closed subscheme, when restricted to the special fiber, has the expected dimension of 0, by a dimension count as in the proof of Proposition \ref{prop:limit_tevelev_generic}. Therefore, applying semicontinuity once more implies that the points satisfying (1)-(6) smooth in a flat family of relative dimension 0 to Tevelev points on the general fiber.

It remains to check the transversality. Again, we appeal to the dictionary between linear series and Hurwitz spaces: a non-zero relative tangent vector to $\Coll^1_d(\cC/B)_{\Tev}$ over $B$ would give rise to a non-zero relative tangent vector of the map $\tau:\cH\to M_{0,g+k}\times M_{0,k}$ where $\cH$ is the Hurwitz space of covers $C_{\spine}=\bP^1\to\bP^1$ defined by the points of $\Coll^1_d(\cC/B)_{\Tev}$, and $\tau$ remembers the marked source and target. However, we have chosen a general point of $\tau:\cH\to M_{0,g+k}\times M_{0,k}$, over which $\tau$ is unramified by generic smoothness, so we have reached a contradiction. This completes the proof.
\end{proof}

\subsection{Enumeration of limit Tevelev points}\label{lim_tev_enum}

By the results of the previous section, we have reduced to the following problem: fix 
\begin{equation*}
    (\bP^1,s_1,\ldots,s_g,t_1,\ldots,t_k)\in M_{0,g+k}
\end{equation*}
general. Then, we wish to count $(V,[f_0:f_1])\in\Coll^1_d(\bP^1)$ satisfying the following properties (cf. Propositions \ref{prop:limit_tev_constraints}, \ref{prop:limit_tevelev_generic} and \ref{prop:lls_smoothing}): 
\begin{enumerate}
    \item[(1)] $V$ is ramified at each $s_j$ for $j=1,2,\ldots,g$;
    \item[(2)] $[f_0:f_1]$ is incident to order $r_h$ at $q_{h}$ when evaluated at $t_h$ for $h=1,2,\ldots,k$;
    \item[(3)] $f_0,f_1$ are linearly independent, that is, $(V,[f_0:f_1])$ lies away from the rank 1 locus.
\end{enumerate}
Note that in this degenerate problem we have dropped the subscript $_{\spine}$.
\begin{remark}
From the results of the previous section, given (1) and (2), asking for $f_0$ and $f_1$ to be linearly independent is the same as asking for $(2')$ and thus the same as asking for $f_0$ and $f_1$ to share no common zero.
\end{remark}
The space $\Coll^1_d(\bP^1)$ is the $\bP^3$-bundle over the Grassmannian $\Gr(2,d+1)$ associated to the vector bundle $\cV\oplus \cV$, where $\cV$ is the tautological rank 2 subbundle on $\Gr(2,d+1)$. As before, we denote the projection map by $\varphi:\Coll^1_d(\bP^1)=\bP(\cV\oplus\cV)\to\Gr(2,d+1)$.

Conditions (1), (2), respectively, give rise to the following cycles on $\Coll^1_d(\bP^1)$:

\begin{enumerate}
    \item[(i)] An intersection of $g$ Schubert cycles of class $\sigma_1\in A^1(\Gr(2,d+1))$, pulled back by $\varphi$;
    \item[(ii)] an intersection of $\sum_{h=1}^{k}r_h=n$ relative hyperplanes of class $c_1(\cO_{\bP(\cV\oplus\cV)}(1))\in A^1(\Coll^1_d(\bP^1))$.
\end{enumerate}

In particular, the cycles (i), (ii) define an intersection in $\Coll^1_d(\bP^1)$ of dimension $(2(d-1)+3)-n-g=0$.

However, we note that the set-theoretic intersection of these cycles typically includes excess components supported in the rank 1 locus, and thus fail (3). Indeed, one may check via parameter counts that there exist points in this intersection where $[f_0:f_1]=[\lambda_0f:\lambda_1f]$ with $[\lambda_0:\lambda_1]=q_{h}$, and furthermore that if $r_h>2$, we have excess loci of positive dimension.

In order to avoid this excess intersection, we replace (2) with the following combination of properties (2) and (2') appearing in Proposition \ref{prop:limit_tev_constraints}.
\begin{enumerate}
    \item[(2'')] $V$ has a non-zero section vanishing to order at least $r_h$ at $t_{h}$ \textit{and} $[f_0:f_1]$ is incident at $q_{h}$ when evaluated at $t_h$ for $h=1,2,\ldots,k$.
\end{enumerate}

For each $h$, property (2'') defines a cycle (ii'') on $\Coll^1_d(\bP^1)$, again of codimension $r_h-1$, given by the intersection of (the pullback of) a Schubert cycle of class $\sigma_{r_h-1}\in A^{r_h-1}(\Gr(2,d+1))$ and a relative hyperplane on $\Coll^1_d(\bP^1)$. The following now shows that the new intersection avoids the rank 1 locus, and the excess intersections are zero-dimensional and transverse.

\begin{proposition}\label{prop:new_intersection_zero_dim}
    The intersection of the cycles (i), (ii''), is transverse and is supported away from the rank 1 locus.
    
    Moreover, the only base-points of $(V,[f_0:f_1])$ occur among the points $t_1,\ldots,t_k$. For each base-point $t_h$, we must have $r_h>1$, and $V$ has vanishing sequence exactly $(1,r_h)$.
    
    Otherwise, $(V,[f_0:f_1])$ satisfies properties (1),(2) exactly.
\end{proposition}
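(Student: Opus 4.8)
The plan is to study the set-theoretic intersection $Z$ of the cycles (i), (ii), (iii'') inside $\Coll^1_d(\bP^1)$: first I would rule out that $Z$ meets the rank 1 locus, then stratify $Z$ according to the base-point behaviour of $V$ and obtain transversality via the Hurwitz-space dictionary already used in the proof of Proposition \ref{prop:limit_tevelev_generic}.

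\textbf{Ruling out the rank 1 locus.} Suppose $(V,[f_0:f_1])\in Z$ lies in the rank 1 locus, say $[f_0:f_1]=[\lambda_0 f:\lambda_1 f]$ with $0\neq f\in V$, and write $V=\langle f,v\rangle$. The incidence part of the hyperplane conditions forces $f$ to vanish at every $p_i$ ($i\le n-r_{\tot}$) with $q_i\neq[\lambda_0:\lambda_1]$ and at every $t_h$ with $q_{n-r_{\tot}+h}\neq[\lambda_0:\lambda_1]$; since the points $q_\bullet$ are distinct, at most one of them equals $[\lambda_0:\lambda_1]$. The Schubert part of (iii'') provides, for each $h$, a nonzero section of $V$ vanishing to order $\ge r_h$ at $t_h$; wherever $f$ does not already vanish to that order this section is forced to be proportional to $v$, so $v$ must vanish to order $\ge r_h$ at $t_h$, and similarly condition (1) forces $v$ to vanish to order $\ge 2$ at each $s_j$. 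Adding up the forced vanishing of $f$ and of $v$, using that both have degree $d$, and comparing against the Plücker bound $\sum_p w(p)=2d-2$ for the $g^1_d$ underlying $V$ on $\bP^1$, one reaches a contradiction. This is a variant of Step 1 of the proof of Proposition \ref{prop:limit_tevelev_generic}; the difference is that (iii'') only imposes incidence to order $1$ at $t_h$, so one leans on the Schubert part of (iii'') (i.e.\ property (3')) in place of the stronger property (3). The identities $d=g+1+\ell$, $n=2d+1-g$, the hypothesis $r_h=|\mu_h|\ge 2$, and the inequalities \eqref{conditions on d}, \eqref{condition on sum of r_h}, \eqref{condition on n} are all used in the bookkeeping. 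It follows that $Z$ avoids the rank 1 locus, hence $V=\langle f_0,f_1\rangle$ with $f_0,f_1$ linearly independent at every point of $Z$.

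\textbf{Base-point structure.} Fix $(V,[f_0:f_1])\in Z$; for each $h$ let $\sigma_h=q^1_{n-r_{\tot}+h}f_0-q^0_{n-r_{\tot}+h}f_1\in V$, which is nonzero by the previous step and vanishes at $t_h$ by the hyperplane part of (iii''), and let $\rho_h\in V$ be a nonzero section vanishing to order $\ge r_h$ at $t_h$ coming from the Schubert part. If $\sigma_h$ and $\rho_h$ are proportional then $\sigma_h$ vanishes to order $\ge r_h$ at $t_h$, i.e.\ property (3) holds there; if they are independent they span $V$ and both vanish at $t_h$, so $t_h$ is a base point of $V$. To pin down the vanishing behaviour I would pass to the moving part: if $B\subset\bP^1$ is the base divisor of $V$, write $V=\cO(-B)\otimes V'$ with $V'$ base-point-free of degree $d-\deg B$, defining a cover $\pi:\bP^1\to\bP^1$. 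Conditions (1), (2) and the properties of $\sigma_h,\rho_h$ translate into prescribed ramification of $\pi$ at $s_1,\ldots,s_g$ and at the $t_h$ and prescribed incidences $\pi(p_i)=q_i$, $\pi(t_h)=q_{n-r_{\tot}+h}$. A parameter count for $\pi$ — using that $s_j,p_i,t_h$ and $q_\bullet$ are general, so the relevant Hurwitz space maps dominantly to the moduli of marked source and target — forces $B$ to be a sum of some of the $t_h$, each with multiplicity exactly $1$, so that $V$ has vanishing sequence exactly $(1,r_h)$ at each such base point, and forces (1), (2) and property (3) at the remaining $t_h$ to hold exactly.

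\textbf{Transversality.} By the previous step $Z$ decomposes into locally closed strata indexed by the subset $J\subseteq\{1,\ldots,k\}$ of indices $h$ for which $t_h$ is a base point; the $J$-th stratum is the fibre over our fixed general configuration of the forgetful map $\tau_J\colon\cH_J\to M_{0,g+n-r_{\tot}+k}\times M_{0,n-r_{\tot}+k}$, where $\cH_J$ is the Hurwitz space of degree $(d-\#J)$ covers $\bP^1\to\bP^1$ with the ramification and incidence profile of the previous step. Since the configuration is general, $\tau_J$ is dominant, and by generic smoothness it is smooth — in fact étale, the source and target having equal dimension by the codimension count defining the cycles — over the configuration. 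Therefore each fibre is a finite reduced scheme; translating back, $Z$ is $0$-dimensional and the intersection of (i), (ii), (iii'') is transverse at each of its points. I expect the main obstacle to be the parameter count of the second step: enumerating the possible strata and ruling out base points of order $\ge2$, base points outside $\{t_1,\ldots,t_k\}$, and excess ramification all rest on the numerical hypotheses and on $|\mu_h|>1$, and require some care.
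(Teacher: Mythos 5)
Your second and third steps (base-point analysis via a parameter count on Hurwitz spaces, and transversality via generic \'etaleness of the forgetful map from the relevant Hurwitz space at a general marked source/target) follow essentially the paper's route and are fine in outline. The genuine gap is in your first step, the exclusion of the rank 1 locus. You write $V=\langle f,v\rangle$ and assert that whenever $f$ does not vanish to order $r_h$ at $t_h$, the Schubert section from (iii'') ``is forced to be proportional to $v$,'' and similarly that condition (1) forces the same $v$ to vanish to order $2$ at every $s_j$. This is false: the section of the pencil vanishing to order $\ge r_h$ at $t_h$ is in general a nontrivial combination $\alpha f+\beta v$, and the sections carrying the ramification at the various $t_h$ and $s_j$ are \emph{different} elements of $V$; no single complementary section $v$ can vanish to order $r_h$ at all $t_h$ and to order $2$ at all $s_j$ (its divisor would have degree at least $r_{\tot}+2g$, typically larger than $d$). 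Hence ``adding up the forced vanishing of $f$ and of $v$'' is not legitimate bookkeeping, and the honest Pl\"ucker count on the original curve does not close: the incidence conditions only force $f$ to vanish to order $1$ at the $p_i$ and at the $t_h$, which contributes nothing to the ramification weight of the pencil there, so the weight one can actually force is roughly $g+\sum_h r_h$, which is in general far below the bound $2d-2=g+n-3$ when $n-r_{\tot}$ is large. So no contradiction follows from the argument as you state it.

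What is missing is a mechanism converting the many simple zeroes of $f$ into ramification of the pencil. The paper's Step 1 does this by introducing the set $J$ of indices $h$ for which the order-$r_h$ section $u_h$ is proportional to $f$, and then further degenerating the marked $\bP^1$ so that all the $p_i$ ($i\le n-r_{\tot}$) and the $t_h$ with $h\in J$ coalesce to one point $p$ (iterating if new dependencies appear). On the limit pencil, the limit of $f$ vanishes at $p$ to order at least $n-r_{\tot}-1+\sum_{h\in J}r_h$, while at each $t_h$ with $h\notin J$ the vanishing sequence is at least $(1,r_h)$ (using the hyperplane part of (iii'') for the extra $1$), and together with the $g$ simple ramifications at the $s_j$ the total weight is at least $g+n-2$, exceeding the Riemann--Hurwitz bound $g+n-3$. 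Some such coalescing degeneration (or a substitute, e.g.\ a dimension count on Hurwitz spaces of the kind you invoke in your second step) is needed to make the rank 1 exclusion work; as written, your Step 1 would fail.
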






\begin{proof}
The proof is similar to that of Proposition \ref{prop:limit_tevelev_generic}. We divide it into three steps.
\begin{enumerate}[label=\underline{Step \arabic*}]
\item We first show that if $(V,[f_0:f_1])$ lies in the intersection of (i) and (ii''), then $f_0$ and $f_1$ must be linearly independent. 
Suppose instead $f_0$ and $f_1$ are linearly dependent, and write $[f_0:f_1]=[\lambda f: \mu f]$. We have:
\begin{enumerate}[label=(\alph*)]
    \item $V$ satisfies condition $(1)$,
    \item $V$ has a non-zero section $u_h$ vanishing to order $r_h$ at $t_h$ for each $h=1,...,k$, and
    \item $V$ has a non-zero section (namely, $f$) vanishing simultaneously at all or all but one of the points $t_1,...,t_k$ (if $[\lambda:\mu]$ is equal to one of the $q_i$, then $f$ need not vanish at the corresponding point of $C_{\spine}$). 
\end{enumerate}

Let $J \subseteq \{1,...,k\}$ be the set of indices $h$ such that the sections $u_h$ and $f$ are linearly dependent for a general choice of $s_j,t_h$. Consider a degeneration of our marked $\mathbb{P}^1$ in which all of the points $t_h$ for $h \in J$ coalesce to a single point $p$, and consider the limit $\widetilde{V}$ of $V$ on the new marked curve. Then (a), (b) and (c) give:
\begin{enumerate}[label=(\alph*)]
    \item $\widetilde{V}$ satisfies condition $(1)$,
    \item $\widetilde{V}$ has a non-zero section $\widetilde{u}_h$ vanishing to order $r_h$ at $t_h$ for each $h \notin J$, and
    \item $\widetilde{V}$ has a non-zero section $\widetilde{f}$ vanishing at $p$ at least to order $-1+\sum_{h \in J}r_h$ and vanishing at $t_h$ for $h \notin J$.
\end{enumerate}
Moreover, we may assume that $\widetilde{u}_h$ and $\widetilde{f}$ are linearly independent for each $h \notin J$ (otherwise we also coalesce $p$ and $t_h$ and keep repeating this process). Then, the total ramification of $\widetilde{V}$ is at least 
$$\left(g+\sum_{h \notin J} r_h\right) + \left(-1+\sum_{h \in J} r_h\right) -1=g+n-2
$$
while, by the Riemann-Hurwitz formula, the maximum allowed is $g+n-3$. This is a contradiction.

\item Now, we prove the claim about base-points, again by a parameter count on Hurwitz spaces. As in Proposition \ref{prop:limit_tevelev_generic}, we find that demanding additional $f_0,f_1$ share a common factor imposes too many conditions on the cover $[f_0:f_1]$, with the following exception. If $f_0,f_1$ are required to vanish to order 1 on some $t_h$ for which $r_h>1$, then we have added exactly one vanishing condition, because $V$ was already constrained to be ramified at $t_h$. On the other hand, we lose the incidence condition at $t_h$, which is now automatic, so such $[f_0:f_1]$ are still expected to exist. On the other hand, if $r_h=1$, requiring that $f_0,f_1$ vanish at $t_h$ adds 2 conditions, so such $[f_0:f_1]$ are not expected to exist in this case.

More precisely, let $\mathcal{H}$ be the Hurwitz stack parametrizing maps $ \mathbb{P}^1 \rightarrow \mathbb{P}^1 $ of the same degree and with exactly the same ramification profiles as $[f_0:f_1]:\mathbb{P}^1 \rightarrow \mathbb{P}^1$. In the domain curve, we mark all the points $s_j,t_h$ while, this time, on the target curve we mark $q_{h}$ for $h=1,...,k$ unless both $f_0$ and $f_1$ vanish at $t_h$. For the corresponding forgetful map 
$$\tau:\mathcal{H} \to M_{0,g+k}\times M_{0,N
}$$ 
(remembering the marked source and target) to be dominant, a parameter count shows that the only possible common zeros of $f_0$ and $f_1$ are $t_1,...,t_k$, and that these zeroes must appear with multiplicity at most 1. This proves the second claim.
 
 \item Transversality is obtained as in the proof of Proposition \ref{prop:lls_smoothing}: a non-zero tangent vector in the intersection gives rise to a non-zero relative tangent vector of a generically \'{e}tale forgetful morphism $\tau:\cH\to M_{0,n_1}\times M_{0,n_2}$ where $\cH$ is a Hurwitz space of covers $\bP^1\to\bP^1$, yielding a contradiction.
\end{enumerate}

\end{proof}

\begin{proof}[Proof of Theorem \ref{main_thm_schubert}]
By Proposition \ref{prop:new_intersection_zero_dim} we need to count the points in the intersection of (i), (ii'') where $V$ is base point free.

First, the intersection number of the cycles (i), (ii'') is given by
\begin{align*}
    &\int_{\Coll^1_d(\bP^1)}\varphi^{*}\left(\sigma_1^g\cdot\prod_{h=1}^{k}\sigma_{r_h-1}\right)\cdot c_1(\cO(1))^{k}\\
    =&\int_{\Gr(2,d+1)}\sigma_1^g\cdot\left(\prod_{h=1}^{k}\sigma_{r_h-1}\right)\cdot\varphi_{*}(c_1(\cO(1))^{k})\\
    =&\int_{\Gr(2,d+1)}\sigma_1^g\cdot\left(\prod_{h=1}^{k}\sigma_{r_h-1}\right)\cdot s_{k-3}(\cV\oplus\cV)\\
    =&\int_{\Gr(2,d+1)}\sigma_1^g\cdot\left(\prod_{h=1}^{k}\sigma_{r_h-1}\right)\cdot\left(\sum_{i+j=k-3}\sigma_i\sigma_j\right)\
\end{align*}
Moreover, the intersection is transverse, and given set-theoretically by the points $(V,[f_0:f_1])$ described in Proposition \ref{prop:new_intersection_zero_dim}.

More generally, suppose $J\subset\{1,2,\ldots,k\}$ and consider the points $(V,[f_0:f_1])$ in the intersection of (i),(ii'') for which $V$ has a simple base-point at $t_h$ for all $h\in J$. Then twisting down $V$ by these $r_h$ and dividing $f_0,f_1$ by the polynomials vanishing on the $t_h$ gives a new point $(V_J,[f^J_0:f^J_1])\in\Coll^1_{d-\# J}(\bP^1)$.

We have that $(V_J,[f^J_0:f^J_1])$ satisfies property (1) as before, and satisfies property (2'') at the points $t_h$ with $h\in I:=\{1,2,\ldots,k\} \backslash J$. Now, at points $t_h$ with $h\in J$, $V_J$ is base-point free of ramification index (at least) $r_h-1$ (with no incidence condition). Computing the number of such $(V_J,[f^J_0:f^J_1])$ similarly to before yields
\begin{equation*}
    \int_{\Gr(2,d+1-\#J)}\sigma_1^g\cdot\left(\prod_{h\in I}\sigma_{r_h-1}\right)\cdot\left(\prod_{h\in J}\sigma_{r_h-2}\right)\cdot\left(\sum_{i+j=k-3-\#J}\sigma_i\sigma_j\right).
\end{equation*}
Note that this count includes $(V_J,[f^J_0:f^J_1])$ that may have simple base-points at $t_h$ with $h\in I$. Note that it is still the case that $f^J_0,f^J_1$ are linearly independent and that the intersection is transverse. We require $d-\#J>0$ and $k-3-\#J\ge0$; if this is not the case, the integral is interpreted to be zero.

Now, the limit Tevelev points are exactly those where $V$ is base-point free. Thus, varying over all possible $J\subset\{1,2,\ldots,k\}$ and applying inclusion-exclusion yields Theorem \ref{main_thm_schubert}.
\end{proof}

\subsection{Comparison with Propositions \ref{prop recursion} and \ref{prop for g=0}}\label{sec:recover_recursion}

As a check, we now demonstrate how Theorem \ref{main_thm_schubert} recovers the formulas of before, namely Propositions \ref{prop recursion} and \ref{prop for g=0}. We continue to assume for notational convenience that $\mu_h=(1)^{r_h}$.

\begin{proposition}
    Let $g \geq 1$, $\ell \in \mathbb{Z}$ an integer and $r_1,...,r_k \in \mathbb{Z}_{\geq 1}$ positive integers. Assume that the Equations \eqref{dimensional constraint}, \eqref{conditions on d}  and \eqref{condition on k} hold with 
    $
    \mu_h=(1)^{r_h}
    $
    for $h=1,...,k$. Then,
    the formula for $\Tev_{g,\ell,r_1,\ldots,r_k}$ of Theorem \ref{main_thm_schubert} satisfies the recursion of Proposition \ref{prop recursion}.
\end{proposition}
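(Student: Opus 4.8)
The plan is to substitute the formula of Theorem \ref{main_thm_schubert} into both sides of the recursion of Proposition \ref{prop recursion} and verify the resulting identity of Schubert integrals directly. Write $\mu_h=(1)^{r_h}$, so that $|\mu_{\tot}|=r_{\tot}$, $d=g+1+\ell$ and $n=g+3+2\ell$; for $J\subseteq\{1,\dots,k\}$ put $N_J:=n-3-r_{\tot}+k-\#J$, the upper index of the Segre sum $S_{N_J}:=\sum_{i+j=N_J}\sigma_i\sigma_j$. The key bookkeeping observation is that passing from $(g,\ell,r_1,\dots,r_k)$ to $(g-1,\ell,r_1,\dots,r_k-1)$ sends $d\mapsto d-1$ but leaves every $N_J$ unchanged, while passing to $(g-1,\ell+1,r_1,\dots,r_k+1)$ leaves both $d$ and every $N_J$ unchanged; in both cases $\sigma_1^g\mapsto\sigma_1^{g-1}$, and only the $h=k$ Schubert factor ($\sigma_{r_k-1}$ if $k\in I$, $\sigma_{r_k-2}$ if $k\in J$) is shifted by $\mp1$. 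Splitting each sum over partitions $I\coprod J=\{1,\dots,k\}$ according to whether $k\in I$ or $k\in J$, and grouping by the induced partition $I_0\coprod J_0=\{1,\dots,k-1\}$ — writing $c:=\#J_0$, $P:=\prod_{h\in I_0}\sigma_{r_h-1}\prod_{h\in J_0}\sigma_{r_h-2}$, $a:=r_k$, $N:=N_{J_0}$ — the recursion reduces, for each fixed background $(I_0,J_0)$, to a single identity among six integrals over the Grassmannians $\Gr(2,d+1-c)$, $\Gr(2,d-c)$ and $\Gr(2,d-1-c)$.

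Next I would bring all six integrals onto a common Grassmannian. The sub-Grassmannian $\Gr(2,m)\hookrightarrow\Gr(2,m+1)$ of $2$-planes contained in a fixed hyperplane is the zero locus of a regular section of $\cV^{\vee}$, so its fundamental class is $c_2(\cV)=\sigma_1^2-\sigma_2=:\beta$; hence $\int_{\Gr(2,m)}\gamma=\int_{\Gr(2,m+1)}\beta\gamma$ for any $\gamma$ of complementary degree (this remains consistent in the degenerate range $m<2$, where both sides vanish). Iterating, each of the six integrals is rewritten over $\Gr(2,d+1-c)$ at the cost of one or two extra factors of $\beta$, and the required identity becomes the equality of two explicit polynomial classes integrated over $\Gr(2,d+1-c)$.

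It then suffices to prove the polynomial identity
\begin{equation*}
\sigma_1\bigl(\sigma_{a-1}S_N-\beta\,\sigma_{a-2}S_{N-1}\bigr)=\sigma_a S_N+\beta\,\sigma_{a-2}S_N-\beta^2\,\sigma_{a-3}S_{N-1}-\beta\,\sigma_{a-1}S_{N-1}
\end{equation*}
in $\bZ[\sigma_1,\sigma_2]$, where $\sigma_i=s_i(\cV)$ and $\beta=\sigma_1^2-\sigma_2$: indeed, the difference of the two polynomial classes from the previous paragraph is $P\sigma_1^{g-1}$ times the difference of the two sides above. Balancing the $S_N$-terms and the $S_{N-1}$-terms separately, this identity reduces to the two instances $\sigma_a=\sigma_1\sigma_{a-1}-\beta\sigma_{a-2}$ and $\sigma_{a-1}=\sigma_1\sigma_{a-2}-\beta\sigma_{a-3}$ of the three-term recursion coming from $\bigl(\sum_i\sigma_i t^i\bigr)\,c_t(\cV)=1$ together with $c_t(\cV)=1-\sigma_1 t+\beta t^2$. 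This recursion is valid once $a=r_k\ge2$, so for $2\le r_k\le d-1$ the verification is complete.

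The remaining points are mild edge cases. When $r_k=1$, the term with ``$r_k-1=0$'' is governed by the convention of Proposition \ref{prop recursion}, which replaces the formula by the one with the $k$-th index deleted; one checks directly that deleting an index with $r_h=1$ alters neither $d$, nor the Grassmannian $\Gr(2,d+1-\#J)$, nor any $N_J$, and only multiplies the relevant partition terms by $\sigma_0=1$ (the would-be $J$-terms vanishing as $\sigma_{-1}=0$), so the two sides match after substituting the convention; the special case $k=1$ is identical. When $r_k=d$, the term $\Tev_{g-1,\ell+1,\dots,r_k+1}$ is $0$ by definition, but in fact applying the formula of Theorem \ref{main_thm_schubert} formally to this datum already yields $0$, since $\sigma_{r_k}=\sigma_d$ and $\sigma_{r_k-1}=\sigma_{d-1}$ both restrict to $0$ on the Grassmannians $\Gr(2,d+1-\#J)$ that occur; hence the computation of the previous paragraphs goes through verbatim. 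Finally, the convention that an integral over $\Gr(2,m)$ with $m<2$ vanishes is compatible with the push-forward relation used above, so no genuine difficulty arises. I expect the main obstacle to be purely organizational — carrying the index shifts and sign changes of the first two paragraphs through correctly — since, once the reduction is in place, the underlying algebraic identity is immediate.
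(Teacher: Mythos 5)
Your proposal is correct and follows essentially the same route as the paper: the paper also verifies the recursion term by term over the partitions $I\coprod J$, using exactly your two ingredients --- the rank-two Pieri relation $\sigma_1\sigma_{a-1}=\sigma_a+\sigma_{1,1}\sigma_{a-2}$ (your $\beta=\sigma_1^2-\sigma_2=c_2(\cV)=\sigma_{1,1}$) and the push-forward $\int_{\Gr(2,N)}\gamma\cdot\sigma_{1,1}=\int_{\Gr(2,N-1)}\gamma$ --- after noting that the Segre-sum index is unchanged under both moves of the recursion. Your additional discussion of the edge cases $r_k=1$ and $r_k=d$ goes beyond the paper, which simply restricts to $r_h>1$ (the hypothesis of Theorem \ref{main_thm_schubert}) and does not address $r_k=d$; those checks are sound.
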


\begin{proof}

We claim that the desired recursion already holds on each individual term of the formula of Theorem \ref{main_thm_schubert}, indexed by a partition $I\coprod J=\{1,2,\ldots,k\}$. Indeed, this is immediate from the Pieri rule
\begin{equation*}
    \sigma_{r_k-1}\cdot\sigma_1=\sigma_{r_k-2}\cdot\sigma_{11}+\sigma_{r_{k}}
\end{equation*}
and the fact that
\begin{equation*}
    \int_{\Gr(2,N)}\beta\cdot\sigma_{11}=\int_{\Gr(2,N-1)}\beta
\end{equation*}
for any $\beta\in A_{0}(\Gr(2,N-1))$.


\end{proof}

\begin{proposition}
    Let $\ell \geq 0$ and $r_1,...,r_k     \in \mathbb{Z}_{\geq 1}$ positive integers such that the inequalities \eqref{conditions on d},     \eqref{dimensional constraint} and \eqref{condition on k} hold with 
    $
    \mu_h=(1)^{r_h}
    $
    for $h=1,...,k$. Then
    the formula for $\Tev_{0,\ell,r_1,\ldots,r_k}$ of Theorem \ref{main_thm_schubert} satisfies Proposition \ref{prop for g=0}, that is, we have $\Tev_{0,\ell,r_1,\ldots,r_k}=1$.
\end{proposition}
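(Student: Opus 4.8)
The plan is to substitute $g=0$ into the formula of Theorem \ref{main_thm_schubert} and evaluate the resulting Schubert expression by hand. With $g=0$ and $\mu_h=(1)^{r_h}$ we have $d=\ell+1$, $n=2\ell+3$, and $\sigma_1^g=1$ disappears, so (writing $I=\{1,\dots,k\}\setminus J$ and $m:=\sum_{h=1}^k(r_h-1)=r_{\tot}-k$, with the usual convention $\sigma_{-1}=0$) the statement to prove is
\begin{align*}
\sum_{J\subseteq\{1,\dots,k\}}(-1)^{\#J}\int_{\Gr(2,\ell+2-\#J)}\Bigl(\prod_{h\notin J}\sigma_{r_h-1}\prod_{h\in J}\sigma_{r_h-2}\Bigr)\cdot\Bigl(\sum_{i+j=2\ell-m-\#J}\sigma_i\sigma_j\Bigr)=1 .
\end{align*}
Here the conditions \eqref{conditions on d}, \eqref{condition on sum of r_h}, \eqref{condition on n} become $r_h\le\ell+1$, $r_{\tot}\le 2\ell+3$, and $m\le 2\ell$.

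The one computational input needed is the elementary evaluation on $\Gr(2,N)$,
\begin{align*}
\int_{\Gr(2,N)}\sigma_{c_1}\cdots\sigma_{c_p}=\bigl[t^{N-2}\bigr]\frac{\prod_{a=1}^p(1-t^{c_a+1})}{(1-t)^{p-1}}\qquad\text{whenever }\textstyle\sum_a c_a=2(N-2),
\end{align*}
which follows from the presentation of $A^{*}(\Gr(2,N))$ in which $\sigma_i$ corresponds to $h_i(y_1,y_2)$, $y_1,y_2$ the Chern roots of $\cV^\vee$, together with $\int_{\Gr(2,N)}P=[y_1^{N-1}y_2^{N-2}]\bigl((y_1-y_2)P\bigr)$ for symmetric $P$ of degree $2(N-2)$. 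I would apply this to each term (which has $k+2$ Schubert factors), use $\sum_{i+j=M}(1-t^{i+1})(1-t^{j+1})=(1-t)^2\sum_{i+j=M}(i+1)(j+1)t^j$ to handle the $\sigma_i\sigma_j$ sum, and then collapse the alternating sum over $J$ by means of $\sum_J\prod_{h\notin J}a_h\prod_{h\in J}(-b_h)=\prod_h(a_h-b_h)$ with $a_h=1-t^{r_h}$, $b_h=tx(1-t^{r_h-1})$. This yields the closed form
\begin{align*}
\Tev_{0,\ell,r_1,\dots,r_k}=\bigl[t^{\ell}x^{2\ell-m}\bigr]\;\frac{\prod_{h=1}^k\bigl((1-tx)-t^{r_h}(1-x)\bigr)}{(1-t)^{k-1}(1-x)^2(1-tx)^2}.
\end{align*}

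To finish, note $(1-tx)-t^{r_h}(1-x)=(1-t)\bigl(1+(1-x)(t+t^2+\cdots+t^{r_h-1})\bigr)$, so the expression equals $[t^{\ell}x^{2\ell-m}]$ of $(1-t)\prod_{h=1}^k\bigl(1+(1-x)(t+\cdots+t^{r_h-1})\bigr)\big/\bigl((1-x)^2(1-tx)^2\bigr)$. This can be evaluated to $1$ by induction on $k$: the base case $k=0$ is $[t^\ell]\bigl((1-t)\sum_{a+b=2\ell}(a+1)(b+1)t^b\bigr)=(\ell+1)^2-\ell(\ell+2)=1$, and the inductive step follows by splitting off the factor $Q_k=1+(1-x)(t+\cdots+t^{r_k-1})$, matching the first summand to $\Tev_{0,\ell,r_1,\dots,r_{k-1}}$ after reconciling the shift $2\ell-m=(2\ell-m')-(r_k-1)$ in the $x$-index, and checking that the leftover summand vanishes using $r_h\le\ell+1$ and $m\le 2\ell$. (Alternatively, one may simply combine Theorem \ref{main_thm_schubert} with Proposition \ref{prop for g=0}.) The main obstacle is bookkeeping: controlling which subsets $J$ actually contribute and verifying that the final coefficient extraction is genuinely $1$ across all admissible $(\ell,r_1,\dots,r_k)$ — the passage to the generating-function identity above is the essential simplification that makes this transparent, as one can already see for $k=1$, where the two terms contribute $r_1$ and $r_1-1$.
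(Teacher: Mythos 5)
Your reduction of the $g=0$ case of Theorem \ref{main_thm_schubert} to a single coefficient extraction is correct, and it is a genuinely different route from the paper's: the evaluation $\int_{\Gr(2,N)}\sigma_{c_1}\cdots\sigma_{c_p}=[t^{N-2}]\prod_{a}(1-t^{c_a+1})/(1-t)^{p-1}$, the identity $\sum_{i+j=M}(1-t^{i+1})(1-t^{j+1})=[x^{M}]\,(1-t)^2/\bigl((1-x)^2(1-tx)^2\bigr)$, and the shift by $t^{\#J}x^{\#J}$ that lets the alternating sum over $J$ collapse all check out, giving $\Tev_{0,\ell,r_1,\ldots,r_k}=[t^{\ell}x^{2\ell-m}]F_k$ with $F_k=(1-t)\prod_{h=1}^{k}Q_h/\bigl((1-x)^2(1-tx)^2\bigr)$ and $Q_h=1+(1-x)(t+\cdots+t^{r_h-1})$. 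The paper instead stays with the Schubert expression, strengthens the statement to a family $\Tev^m_{0,\ell,r_1,\ldots,r_k}=1$ for all cutoffs $0\le m\le d-1$ (imposing $i\ge m$ in the last sum), and runs a Pieri-rule induction on $k$; your compression into a rational generating function is more elementary and would be an attractive alternative if completed.

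The gap is your inductive step, which is false as described. Splitting $F_k=F_{k-1}+(1-x)(t+\cdots+t^{r_k-1})F_{k-1}$, the first summand is $[t^{\ell}x^{2\ell-m}]F_{k-1}$, whereas the inductive quantity $\Tev_{0,\ell,r_1,\ldots,r_{k-1}}$ is $[t^{\ell}x^{2\ell-m+r_k-1}]F_{k-1}$; these are different coefficients of $F_{k-1}$ and are not equal in general, and the leftover summand does not vanish. Concretely, for $k=1$, $\ell=1$, $r_1=2$ (so $m=1$) one has $[t^1x^1]F_0=0$ while $\Tev_{0,1}=[t^1x^2]F_0=1$, and the leftover $[t^1x^1](1-x)t\,F_0=1$: the two summands trade the value $1$ between them rather than the second one dying. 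This is exactly the difficulty the paper's proof is built around: in its recursion the two resulting terms are complementary, one equal to $1$ and the other to $0$ according to whether $m+r_k\le d-1$, which is why it proves the stronger statement with the auxiliary cutoff $m$. Your induction needs an analogous strengthening (for instance, an evaluation of $[t^{\ell}x^{s}]F_k$ for the whole relevant range of $s$, or an auxiliary parameter playing the role of the paper's $m$) before it closes; as written it does not. The fallback of simply combining Theorem \ref{main_thm_schubert} with Proposition \ref{prop for g=0} is logically available (where the theorem's hypothesis $|\mu_h|>1$ holds), but it empties the proposition of content, since its purpose is an independent verification that the Schubert formula reproduces the genus-$0$ count.
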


\begin{proof}
Fix a non-negative integer $m$, and define $\Tev^m_{0,\ell,r_1,\ldots,r_k}$ by 
\begin{align*}
    \sum_{I\coprod J=\{1,2,\ldots,k\}}(-1)^{\# J}\cdot\int_{\Gr(2,d+1-\#J)}&\left(\prod_{h\in I}\sigma_{r_h-1}\prod_{h\in J}\sigma_{r_h-2}\right)\\
    \cdot&\left(\sum_{\substack{i+j=k-3-\#J\\ i\ge m}}\sigma_i\sigma_j\right).
\end{align*}
where $\Tev^m_{0,\ell,r_1,\ldots,r_k}$ differs from $\Tev_{0,\ell,r_1,\ldots,r_k}$ in that we require $i\ge m$ in the last term.

We prove the stronger statement that $\Tev^m_{0,\ell,r_1,\ldots,r_k}=1$ whenever $0\le m\le d-1$ by induction on $k$. Note that when $m>d-1$, all of the terms vanish automatically, because $\sigma_{i}=0$ for $i>d-1$.

First, consider the base case $k=3$. Then, the statement is simply that
\begin{equation*}
    \int_{\Gr(2,d+1)}\sigma_{r_1-1}\sigma_{r_2-1}\sigma_{r_3-1}=1
\end{equation*}
whenever $r_1,r_2,r_3\le d$, which follows from the Pieri rule.

For the inductive step, fix a partition $I'\coprod J'=\{1,2,\ldots,k-1\}$, and consider the summands of $\Tev^m_{0,\ell,r_1,\ldots,r_k}$ indexed by $(I'\cup\{k\})\coprod J'$ and $I'\coprod (J'\cup\{k\})$. 

Matching the summand indexed by $(i+1,j)$ from the partition $(I'\cup\{k\})\coprod J'$ with that indexed by $(i,j)$ from the partition $I'\coprod (J'\cup\{k\})$ leaves one additional term from the first partition, coming from $i=m,j=k-4-\#J'-m$. By the Pieri rule, observe further that
\begin{equation*}
\sigma_{r_k-1}\sigma_{i+1}-\sigma_{r_k-2}\sigma_{i}\sigma_{11}=\sigma_{r_k+i}.
\end{equation*}
Thus, we find the two partitions in question contribute
\begin{align*}
    (-1)^{\# J'}\cdot&\int_{\Gr(2,d+1-\#J')}\left(\prod_{h\in I'}\sigma_{r_h-1}\prod_{h\in J'}\sigma_{r_h-2}\right)\cdot\left(\sum_{\substack{i+j=k-4-\#J'\\ i\ge m}}\sigma_{r_k+i}\sigma_j\right)\\
    +(-1)^{\# J'}\cdot&\int_{\Gr(2,d+1-\#J')}\left(\prod_{h\in I'}\sigma_{r_h-1}\prod_{h\in J'}\sigma_{r_h-2}\right)\cdot\sigma_{r_k-1}\cdot(\sigma_{m}\sigma_{k-3-\#J'-m})
\end{align*}
Summing over all $I',J'$ and reindexing the sum in the first term, we find that
\begin{align*}
\Tev^m_{0,\ell,r_1,\ldots,r_k}&=\Tev^{m+r_k}_{0,\ell,r_1,\ldots,r_{k-1}}+\sum_{I'\coprod J'=\{1,2,\ldots,k-1\}}(-1)^{\# J'}\\
\cdot&\int_{\Gr(2,d+1-\#J')}\left(\prod_{h\in I'}\sigma_{r_h-1}\prod_{h\in J'}\sigma_{r_h-2}\right)\cdot\sigma_{r_k-1}\cdot(\sigma_{m}\sigma_{k-3-\#J'-m}).
\end{align*}

By the inductive hypothesis, if $m+r_k\le d-1$, then $\Tev^{m+r_k}_{0,\ell,r_1,\ldots,r_{k-1}}=1$. On the other hand, if $m+r_k>d-1$, then every term in the sum defining $\Tev^{m+r_k}_{0,\ell,r_1,\ldots,r_{k-1}}$ vanishes, so $\Tev^{m+r_k}_{0,\ell,r_1,\ldots,r_{k-1}}=0$.

It therefore suffices to show that the second term is equal to 0 if $m+r_k\le d-1$ and 1 if $m+r_k>d-1$. We claim that the sum is equal to simply
\begin{equation*}
    \int_{\Gr(2,d+1)}\sigma_{r_k-1}\sigma_m\sigma_{(r_1+r_2+\cdots+r_{k-1})-2-m}=\int_{\Gr(2,d+1)}\sigma_{r_k-1}\sigma_m\sigma_{2d-1-r_k-m}.
\end{equation*}

To see this, we employ the same inductive strategy as before: we fix a partition $I''\coprod J''=\{1,2,\ldots,k-2\}$ and match the terms of the sum where $J'=J''$ and $J'=J''\cup\{k-1\}$, using the fact that
\begin{align*}
    &\sigma_{r_{k-1}-1}\sigma_{k-3-\#J''-m}-\sigma_{r_{k-1}-2}\sigma_{k-3-(\#J''+1)-m}\sigma_{11}\\
    =&\sigma_{r_{k-1}+(k-4)-\#J''-m}.
\end{align*}
Continuing in this fashion yields the claim.

Again, by the Pieri rule, the final integral is equal to 1 whenever each of the three terms does not vanish. Because $2\le r_k\le d$ and $0\le m\le d-1$, this happens exactly when $r_k+m\ge d$, which is exactly what we need.
\end{proof}


\begin{thebibliography}{99}


\bibitem{bp} A. Buch and R.  Pandharipande, \emph{Tevelev degrees in Gromov-Witten theory}, arXiv 2112.14824

\bibitem{Cast} G. Castelnuovo, {\em Numero delle involuzioni razionali giacenti sopra una curva di dato genere}, \href{http://emeroteca.braidense.it/beic_attacc/sfoglia_articolo.php?IDTestata=927&CodScheda=00AI&IDT=34&IDV=388&IDF=0&IDA=19791}{Rendiconti R. Accad. Lincei {\bf 5} (1889), 130--133.}

\bibitem{Cela} A. Cela, {\em Quantum Euler class and virtual Tevelev degrees of Fano complete intersections
}, \href{https://arxiv.org/abs/2204.01151}{arXiv:2204.01151}

\bibitem{CPS} A. Cela, R. Pandharipande and J. Schmitt, {\em Tevelev degrees and Hurwitz moduli spaces}, \href{https://www.cambridge.org/core/journals/mathematical-proceedings-of-the-cambridge-philosophical-society/article/abs/tevelev-degrees-and-hurwitz-moduli-spaces/ED064F6460BA33FEB50F5207BE31C03B}{Math. Proc. Cambridge Philos. Soc. {\bf 173} (2022), 479-510.}

\bibitem{EH} D. Eisenbud and J. Harris, David Eisenbud and Joe Harris, \emph{Limit linear series: Basic theory}, \href{https://link.springer.com/article/10.1007/BF01389094}{Invent. Math. \textbf{85} (1986), 337--371.}

\bibitem{HM} J. Harris and D. Mumford, {\em On the Kodaira Dimension of the Moduli Space of Curves}, \href{https://link.springer.com/article/10.1007/BF01393371}{Invent. Math. {\bf 67} (1982),  23-86.}

\bibitem{FarkasLian} G. Farkas and C. Lian,
{\em Linear series on general curves
with prescribed incidence conditions}, \href{ https://www.cambridge.org/core/journals/journal-of-the-institute-of-mathematics-of-jussieu/article/linear-series-on-general-curves-with-prescribed-incidence-conditions/CA1BA12EF2131B247F0419A3F280F76D}{J. Inst. Math. Jussieu, to appear.}

\bibitem{Lian} C. Lian, {\em Non-tautological Hurwitz cycles}, 
 \href{https://link.springer.com/article/10.1007/s00209-021-02903-7}{Math. Z. {\bf 301} (2022), 173-198.}

\bibitem{lp} C. Lian and R. Pandharipande, \emph{Enumerativity of virtual Tevelev degrees}, \href{https://arxiv.org/abs/2110.05520}{arXiv 2110.05520.}

\bibitem{lieblichosserman} M. Lieblich and B. Osserman, {\em Universal limit linear series and descent of moduli spaces}, \href{https://link.springer.com/article/10.1007/s00229-018-1049-5}{Manuscripta Math. \textbf{159} (2019), 13-38.}


\bibitem{SvZ} J. Schmitt and J. van Zelm, {\em Intersections of loci of admissible covers with tautological classes}, \href{https://link.springer.com/article/10.1007/s00029-020-00603-4}{Selecta Math. {\bf 26} (2020).}


\bibitem{tevelev} J. Tevelev, {\em Scattering amplitudes of stable curves}, \href{https://arxiv.org/abs/2007.03831}{arXiv 2007.03831.}

\bibitem{vainsencher} I. Vainsencher, {\em Complete collineations and blowing up determinantal ideals}, \href{https://link.springer.com/article/10.1007/BF01456098}{Math. Ann. \textbf{267} (1984), 417-432.}



\end{thebibliography}
\end{document}